\numberwithin{equation}{section}
\begin{document}

\newtheorem{thm}{Theorem}[section]
\newtheorem{prop}[thm]{Proposition}
\newtheorem{lem}[thm]{Lemma}
\newtheorem{cor}[thm]{Corollary}
\newtheorem{rem}[thm]{Remark}
\newtheorem{defn}[thm]{Definition}

\newcommand{\DD}{\mathbb{D}}
\newcommand{\NN}{\mathbb{N}}
\newcommand{\ZZ}{\mathbb{Z}}
\newcommand{\QQ}{\mathbb{Q}}
\newcommand{\RR}{\mathbb{R}}
\newcommand{\CC}{\mathbb{C}}
\renewcommand{\SS}{\mathbb{S}}

\renewcommand{\theequation}{\arabic{section}.\arabic{equation}}

\newcommand{\supp}{\mathop{\mathrm{supp}}}    

\newcommand{\re}{\mathop{\mathrm{Re}}}   
\newcommand{\im}{\mathop{\mathrm{Im}}}   
\newcommand{\dist}{\mathop{\mathrm{dist}}}  
\newcommand{\link}{\mathop{\circ\kern-.35em -}}
\newcommand{\spn}{\mathop{\mathrm{span}}}   
\newcommand{\ind}{\mathop{\mathrm{ind}}}   
\newcommand{\rank}{\mathop{\mathrm{rank}}}   
\newcommand{\Fix}{\mathop{\mathrm{Fix}}}   
\newcommand{\codim}{\mathop{\mathrm{codim}}}   
\newcommand{\conv}{\mathop{\mathrm{conv}}}   
\newcommand{\epsi}{\mbox{$\varepsilon$}}
\newcommand{\eps}{\mathchoice{\epsi}{\epsi}
{\mbox{\scriptsize\epsi}}{\mbox{\tiny\epsi}}}
\newcommand{\cl}{\overline}
\newcommand{\pa}{\partial}
\newcommand{\ve}{\varepsilon}
\newcommand{\zi}{\zeta}
\newcommand{\Si}{\Sigma}
\newcommand{\cA}{{\mathcal A}}
\newcommand{\cG}{{\mathcal G}}
\newcommand{\cH}{{\mathcal H}}
\newcommand{\cI}{{\mathcal I}}
\newcommand{\cJ}{{\mathcal J}}
\newcommand{\cK}{{\mathcal K}}
\newcommand{\cL}{{\mathcal L}}
\newcommand{\cN}{{\mathcal N}}
\newcommand{\cR}{{\mathcal R}}
\newcommand{\cS}{{\mathcal S}}
\newcommand{\cT}{{\mathcal T}}
\newcommand{\cU}{{\mathcal U}}
\newcommand{\OM}{\Omega}
\newcommand{\B}{\bullet}
\newcommand{\ol}{\overline}
\newcommand{\ul}{\underline}
\newcommand{\vp}{\varphi}
\newcommand{\AC}{\mathop{\mathrm{AC}}}   
\newcommand{\Lip}{\mathop{\mathrm{Lip}}}   
\newcommand{\es}{\mathop{\mathrm{esssup}}}   
\newcommand{\les}{\mathop{\mathrm{les}}}   
\newcommand{\nid}{\noindent}
\newcommand{\pzr}{\phi^0_R}
\newcommand{\pir}{\phi^\infty_R}
\newcommand{\psr}{\phi^*_R}
\newcommand{\pow}{\frac{N}{N-1}}
\newcommand{\ncl}{\mathop{\mathrm{nc-lim}}}   
\newcommand{\nvl}{\mathop{\mathrm{nv-lim}}}  
\newcommand{\la}{\lambda}
\newcommand{\La}{\Lambda}    
\newcommand{\de}{\delta}    
\newcommand{\fhi}{\varphi} 
\newcommand{\ga}{\gamma}    
\newcommand{\ka}{\kappa}   

\newcommand{\core}{\heartsuit}
\newcommand{\diam}{\mathrm{diam}}

\newcommand{\lan}{\langle}
\newcommand{\ran}{\rangle}
\newcommand{\tr}{\mathop{\mathrm{tr}}}
\newcommand{\diag}{\mathop{\mathrm{diag}}}
\newcommand{\dv}{\mathop{\mathrm{div}}}

\newcommand{\al}{\alpha}
\newcommand{\be}{\beta}
\newcommand{\Om}{\Omega}
\newcommand{\na}{\nabla}

\newcommand{\cC}{\mathcal{C}}
\newcommand{\cM}{\mathcal{M}}
\newcommand{\nr}{\Vert}
\newcommand{\De}{\Delta}
\newcommand{\cX}{\mathcal{X}}
\newcommand{\cP}{\mathcal{P}}
\newcommand{\om}{\omega}
\newcommand{\si}{\sigma}
\newcommand{\te}{\theta}
\newcommand{\Ga}{\Gamma}

\newcommand{\loc}{\mathrm{loc}}

\newcommand{\ca}{\tilde{a}}

%\title[Symmetry for a mixed 
%Serrin-type problem]{Quantitative symmetry for a mixed \\ 
%Serrin-type problem in a half ball}

\title[Quantitative symmetry in a mixed 
Serrin-type problem]{Quantitative symmetry \\ in a mixed  
Serrin-type problem \\ for a constrained torsional rigidity}

\author{Rolando Magnanini} 
\address{Dipartimento di Matematica ed Informatica ``U.~Dini'',
Universit\` a di Firenze, viale Morgagni 67/A, 50134 Firenze, Italy.}
%    %\curraddr{...}
    \email{rolando.magnanini@unifi.it}
    \urladdr{http://web.math.unifi.it/users/magnanin}

\author{Giorgio Poggesi}
\address{Department of Mathematics and Statistics, The University of Western Australia, 35 Stirling Highway, Crawley, Perth, WA 6009, Australia}
%Loro scrivono:
%\address{Department of Mathematics and Statistics, University of Western Australia, 35 Stirling Highway, WA6009 Crawley, Australia}
    %\curraddr{...}
    \email{giorgio.poggesi@uwa.edu.au}

% \dedicatory{To }

\begin{abstract}
We consider a mixed boundary value problem in a domain $\Om$ contained in a half-ball $B_+$ and having a portion $\ol{T}$ of its boundary in common with the curved part of $\pa B_+$. The problem has to do with some sort of constrained torsional rigidity. In this situation, the relevant solution $u$ satisfies a Steklov condition on $T$ and a homogeneous Dirichlet condition on $\Si=\pa\Om\setminus \ol{T} \subset B_+$.  We provide an integral identity that relates (a symmetric function of) the second derivatives of the solution in $\Om$ to  its normal derivative $u_\nu$ on $\Si$. A first significant consequence of this identity is a rigidity result under a quite weak overdetermining integral condition for $u_\nu$ on $\Si$: in fact, it turns out that $\Si$ must be a spherical cap that meets $T$ orthogonally. This result returns the one obtained by J. Guo and C. Xia under the stronger pointwise condition that the values of $u_\nu$ be constant on  $\Si$.  A second important consequence is a set of stability bounds, which quantitatively measure how $\Si$ is far uniformly from being a spherical cap, if $u_\nu$ deviates from a constant in the norm $L^1(\Si)$.
%%%%%%%%
%The relevant stability rates are of polynomial type.
%%%%%%%%%%%%%%%
\end{abstract}

\keywords{Serrin's overdetermined problem, mixed boundary value problems, contrained torsional rigidity, integral identities, stability, quantitative estimates}
\subjclass[2020]{Primary 35N25, 35B35, 35M12; Secondary 35A23}

\maketitle

\raggedbottom

\section{Introduction}
%%%%%%
%\marginpar{\tiny ho cambiato il titolo: vedi se ti piace; altrimenti, ripristiniamo l'altro}
%%%%%%%%%%%%

Let $B$ and $S=\pa B$ be the (open) unit ball and the unit sphere in $\RR^N$, centered at the origin, and set $B_+=\{ x=(x_1,\dots, x_N)\in B : x_N>0\}$. Consider in $B_+$
a bounded domain $\Om$ (i.e., a bounded open connected set) whose boundary $\Ga$
is the union of $\ol{\Si}$ and $T= \Ga \setminus \ol{\Si}$, where $\Si$ is a smooth hypersurface contained in $B_+$, $T$ is a subset of $S$, and $\ol{T}$ meets $\ol{\Si}$ at a common $(N-2)$-dimensional submanifold $\La= \ol{\Si} \cap \ol{T}$ of $S$.
\par
In $\Om$, we consider the following mixed
%%%%%%%%%%%%
%Dirichlet-Steklov 
%%%%%%%%%%%
boundary value problem:
\begin{equation}
\label{eq:problem}
\De u = N \ \text{  in  } \ \Om, \quad u=0 \ \text{  on  } \ \Si , \quad u_{\nu} = u \ \text{  on  } \ T.
\end{equation}
Here and in what follows,  $u_\nu$ denotes the derivative of $u$ in the direction of  the outward unit normal $\nu$ to $\Ga$. In particular, we have that
\begin{equation*}
\nu(x)=x \ \text{ on } \ T.
\end{equation*}

For the existence and uniqueness of a solution 
%%%%%%%%%%%
%$u \in C^\infty ( \ol{\Om} \setminus \La) \cap C^{0, \ga} (\ol{\Om})$, 
%%%%%%%%%%%%
$u$ of \eqref{eq:problem}, which is smooth in $\ol{\Om} \setminus \La$ and belongs to $C^{0, \ga} (\ol{\Om})$ for some $\ga \in (0,1)$, we refer the reader to \cite[Proposition~2.2]{GX}; more precisely, \cite{Lieberman} guarantees that $u \in C^{0, \ga} (\ol{\Om})$, whereas the classical regularity theory for elliptic equations gives that, for $k\in\NN$ with $k \ge 2$ and $\ga \in (0,1)$, $u \in C^{k,\ga} ( \ol{\Om} \setminus \La)$ provided that $\Si$ is of class $C^{k,\ga}$.
%%%%%%%%%%%%%%%%%%%%%%%%%%%%%%%%
%\marginpar{\tiny Volendo potremmo richiedere $\Si$ solo $C^{k,\al}$ e ottenere
%$u \in C^\infty ( \ol{\Om} \setminus \ol{\Si}) \cap C^{k,\al} ( \ol{\Om} \setminus \La) \cap C^{0, \ga} %(\ol{\Om})$ che basterebbe (con $k=2$ o forse anche $1$) per le identit\`a. Ma va bene anche cos\`i.}
%%%%%%%%%%%%%%
%%%%
%Here, $C^{0, \ga} (\ol{\Om})$ is the space of H\"older continuous functions on $\ol{\Om}$ with exponent %$\ga \in (0,1]$. 
%%%%%%%%%%%%%%
In \cite{GX}, the solution of \eqref{eq:problem} is obtained as a suitably normalized solution of the variational problem:
$$
\sup_{0\ne v\in W^{1,2}_0(\Om, \Si)}\frac{\left(\int_\Om v\,dx\right)^2}{\int_\Om |\na v|^2 dx-\int_T v^2 dS_x}.
$$
Here, $W^{1,2}_0(\Om, \Si)$ denotes the subspace of functions in $W^{1,2} (\Om)$ vanishing %\marginpar{\tiny giusto?}
in a Sobolev sense on $\Si$. The supremum can be interpreted as some sort of \textit{relative} or \textit{constrained torsion} $\cT(\Om,\Si)$ for domains contained in $B$. In fact, if $\ol{\Om}\subset B$ (and hence $T=\varnothing$), we recover a definition of the standard torsion of $\Om$ (see \cite{Ma}).

%There always exists (\cite[Prop 2.2]{GX}) a unique non-positive  (\cite[Prop 2.3]{GX}) solution $u \in C^\infty ( \ol{\Om} \setminus \La) \cap C^{0, \ga} (\ol{\Om})$ of \eqref{eq:problem} for some $\ga \in (0,1)$ (see \cite{Li} for the regularity). 

The issue of regularity up to the (whole) boundary for \eqref{eq:problem} is delicate. The regularity of the solution $u$ strongly depends on how $\ol{\Si}$ and $\ol{T}$ intersect. 
As done in \cite{GX}, we shall further assume that $u$ belongs to $W^{1,\infty}(\Om) \cap W^{2,2} (\Om)$ to ensure that we can integrate by parts.
As shown in \cite[Proposition 3.5]{GX}, such an assumption is surely satisfied whenever $\ol{\Si}$ and $\ol{T}$ intersect orthogonally.

%%%%%%%%%%%%%%%%%%%%
%\begin{equation*}
%\label{eq:assumption u}
%u \in W^{1,\infty}(\Om) \cap W^{2,2} (\Om) ,
%\end{equation*}
%%%%%%%%%%%%%%%%%
%
%%%%%%%%%%%%%%%%%%%%%%%%%%%%%%%%%%%%%%%%%%%%%%
%\footnote{\color{red} Sull'assunzione $W^{2,2}$. Si dovrebbe sempre avere $W^{2,2}_{loc}$ ma forse non %basta. Comunque, siccome \`e delicato lascerei questa assunzione.}
%%%%%%%%%%%%%%%%%%%%%%%%%%%%%
%
%
%Since under assumption \eqref{eq:assumption u}, $u \in W^{1,\infty} (\Om)$, then 
%%
%%Sobolev imbedding theorem ensures that 
%%
%$u \in C^{0,1} (\ol{\Om})$. In fact, the equivalence $W^{1,\infty} (E)=C^{0,1}( \ol{E} )$ holds true whenever $E$ is quasiconvex. Moreover, since $u=0$ on $\Si$ we can extend it by $0$ outside $\Omega$ to the whole $B$, obtaining a function in $W^{1, \infty} (B)$, which coincides with $C^{0,1} ( \ol{B} )$, being as $B$ a convex (and hence also quasiconvex) domain.

The aim of this paper is to study a Serrin-type overdetermined boundary value problem for \eqref{eq:problem}. In fact, similarly to \cite{Se}, \cite{We} and as done in \cite{GX}, we add the extra condition
\begin{equation}
\label{extra-condition}
u_\nu=R \ \mbox{ on } \ \Si,
\end{equation}
where $R$ is some given constant. In \cite{GX}, under suitable regularity assumptions, it is shown that the problem \eqref{eq:problem}-\eqref{extra-condition} arises naturally in a shape optimization problem. If the relative torsion $\cT(\Om,\Si)$ is stationary with respect to volume-preserving transformations at a domain $\Om$, then the corresponding function $u$ that attains $\cT(\Om,\Si)$ satisfies \eqref{eq:problem}-\eqref{extra-condition} (see \cite[Proposition 4.2]{GX}). 
\par
A rigidity result for problem \eqref{eq:problem}-\eqref{extra-condition} has been proved by J.~Guo and C.~Xia in \cite{GX}. In our slightly different setting, the main result in \cite{GX} states that, if a suitably regular overdetermined solution exists, then $R>0$, $\Si$ must be the spherical cap %$\cC_R(z)$ 
defined by
\begin{equation}
\label{spherical-cap}
\{ x\in B_+: |x-z|=R\} \ \mbox{ with } \ |z|=\sqrt{1+R^2},
\end{equation}
and $u$ must be equal on $\ol{\Om}$ to the quadratic polynomial defined for $x\in\RR^N$ by 
$$
\frac12\,(|x-z|^2-R^2).
$$
We shall compute in Proposition \ref{prop:value-R} the exact value of $R$ in terms of $\Om$ as
\begin{equation}
\label{value-R-intro}
R=N\,\frac{\int_\Om x_N\,dx}{\int_\Si x_N\,dS_x}.
\end{equation} 

\begin{figure}[h]
\label{fig:symmetric-domain}
\begin{center}
\begin{tikzpicture}[scale=.32] 
\draw[thick] (7,0) arc (0.8:180:8); % semisfera
\draw[thick] (7,0) arc (-64.5:-114.5:19); % arco di base continuo
\draw[thick,dotted] (7,0) arc (66:116:19); % arco di base tratteggiato

\draw plot [very thick, mark=*, smooth] coordinates
{(-4.6,15)}; % punto z
\node at (-5.5,15.3) {$z$}; % label punto z

\draw plot [very thick, mark=*, smooth] coordinates
{(-1,0)}; % origine
\node at (-1,-.8) {$0$}; % label origine

\draw plot [very thick, mark=*, smooth] coordinates
{(-1.95,4)}; % centro sezione

\draw[thick] (-4.6,15) -- (-8.65,2.4); % tangente sinistra alla semisfera
\draw[solid] (-1,0) -- (-8.65,2.4); % raggio sinistro semisfera

\draw[thick] (-4.6,15) -- (4.7,5.45); % tangente destra alla semisfera
\draw[solid] (-1,0) -- (4.7,5.47); % raggio destro semisfera
\draw[thick] (-4.6,15) -- (-1,0); % segmento 0 -- z

\draw [thick,domain=252:314.5] plot ({-4.6+13.2*cos(\x)}, {15+13.2*sin(\x)}); % arco calotta
\draw [thick, domain=268.4:297] plot ({-7.9+27.55*cos(\x)}, {30.1+27.55*sin(\x)}); %  arco T\cap\Si

\draw [thick,dotted,domain=79.8:125] plot ({1.6+17.5*cos(\x)}, {-11.6+17.5*sin(\x)}); % arco retro calotta punteggiato

\node at (-10.5,8.5) {$\sqrt{1+R^2}$};  % label radice
\draw[->] (-8,8.5) -- (-3.4,9); % freccia radice

\node at (3.5,2) {$\mathbf\Si$}; % label Sigma
\draw[->] (3,2) -- (0.2,2.5); freccia Sigma

\node at (0.8,11.5) {$R$}; % label R
\node at (-4.5,0.5) {$1$}; % label 1

\node at (-0.8,8.6) {$\mathbf T$}; % label T
%\node at (3.5,2) {$\Si$}; % label Sigma
\node at (0,5) {$\Om$}; % label Omega
\node at (7,4.5) {$B_+$}; % label B_+
\end{tikzpicture}
\end{center}
\caption{The construction of a symmetric domain $\Om$. The $R$-spherical cap $\Si$ meets orthogonally the unit spherical cap $T$.}
\end{figure}

Thus, $\Si$ must be a spherical cap and $\Om$ results as a lenticular domain, as shown in Figure \ref{fig:symmetric-domain}.  In this paper, we shall study 
the problem \eqref{eq:problem}-\eqref{extra-condition} from a quantitative point of view. In other words, we will estimate how close $\Si$ is to a spherical cap in terms of the deviation of $u_\nu$ from the constant $R$ in some Lebesgue norm on $\Si$.
\par
In order to do that, we first refine Guo and Xia's rigidity result. In fact, by shadowing the arguments in \cite{GX}, we obtain the following integral identity for the solution of \eqref{eq:problem}:
\begin{equation}
\label{fundamental-identity}
\int_\Om x_N (-u) \left\{ | \na^2 u|^2- \frac{(\De u)^2}{N} \right\} dx = \\
\frac{1}{2} \int_\Si \left( u_\nu^2 - R^2 \right) \bigl[ u_\nu x_N - \lan X^q, \nu\ran \bigr]  dS_x.
\end{equation}
%%
%This holds under suitable regularity assumptions. 
%%%%
Here, $X^q$ is the conformal \textit{Killing field} defined by
\begin{equation}
\label{Killing}
X^q=x_N\, x -\frac{1}{2} \left( |x|^2 +1 \right) e_N=x_N\, \na q(x) -q(x)\, e_N, \ \ x\in\RR^N,
\end{equation}
where $e_N=(0,\dots, 1)\in\RR^N$, and we set: $q(x)=(1+|x|^2)/2$. 
Integral identities of this kind have been obtained for the Alexandrov's Soap Bubble Theorem and the classical Serrin's problem by the authors of this note (see \cite{MP, MP2, MP3, Pog2}). In those cases, the role of the field $X^q$ in the identity was played by the identity field $\RR^N\ni x\mapsto x$. Note that, on the unit sphere $S$, $X^q$ is the projection of $-e_N$ on the tangent space to $S$.
\par
In \cite{GX}, it is proved that, if $u$ satisfies \eqref{eq:problem}-\eqref{extra-condition}, then the left-hand side of \eqref{fundamental-identity} must be zero. Since $x_N>0$ in $\Om\subset B_+$ and $u<0$ in $\Om$ by \cite[Proposition 2.3]{GX}, the function in the braces at the left-hand side of \eqref{fundamental-identity}
must vanish identically on $\Om$, since it is always non-negative by the Cauchy-Schwarz inequality. As a by-product, one infers that $u$ must be a spherically symmetric quadratic polynomial, as noted in \cite{MP}. Thus, $\Si$ must be a portion of a sphere, since $u=0$ on $\Si$. The lenticular shape of $\Om$ then ensues quite easily.
\par
Now, observe that, from \eqref{fundamental-identity} it is evident that its right-hand side (and hence its left-hand side) is null if \eqref{extra-condition} holds. However, \eqref{fundamental-identity} gives more information for at least two reasons. One is that Guo and Xia's rigidity result can be merely obtained
under the weaker assumption that the right-hand side of \eqref{extra-condition} is non-positive.
%%%%%%%%%%%%%%%%%%%%%%%%%%%%%%
%by assuming that the right-hand side of \eqref{extra-condition} be non-positive --- a much weaker %assumption. 
%%%%%%%%%%%%%%%%%%%%%%%%%%%%%%%%%
The second and more important reason is that the identity gives  quantitative information. In fact, if we know that $u_\nu$ deviates from $R$ by little in some integral norm, then the integral at the left-hand side of \eqref{fundamental-identity} is small.  
\par
Now, notice that, if we consider a quadratic polynomial as defined by
$$
Q(x)=\frac12\,|x-z|^2-q_0 \ \mbox{ for } \ x, z\in\RR^N, \ q_0\in\RR,
$$
and we set $h=Q-u$, then it turns out that
$$
| \na^2 u|^2- \frac{(\De u)^2}{N}=|\na^2 h|^2.
$$

Thus, the square root of the first integral in \eqref{fundamental-identity} can be seen as the weighted (second order) $W^{2,2}$-seminorm in $\Om$ of $h$ with respect to the positive measure $x_N\,[-u(x)]\,dx$. Also, notice that $h=Q$ on $\Si$, and hence $Q$ has to do with the distance of the point $z$ to points in $\Si$. Therefore, we will see that, in order to obtain an estimate of closeness of $\Si$ from the spherical cap defined in \eqref{spherical-cap}, 
%$\cC_R(z)$, 
it is just the matter of proving that the oscillation of $h=Q$ on $\Si$ can be controlled in terms of the aforementioned weighted $W^{2,2}$-seminorm of $h$.
\par
We are now going to present our quantitative rigidity estimates. We need to recall some notation from the subsequent sections. 
\par
As in \cite{GX}, we assume that $u \in W^{1,\infty} (\Om) \cap W^{2,2} (\Om)$.
Under this assumption, since $u \in W^{1,\infty} (\Om)$ and $u=0$ on $\Si$, then $u \in C^{0,1} (\ol{\Om})$. In fact, we can extend $u$ by $0$ outside $\Om$ to the whole $B$,
%%%%%%%%%%%%%%%
%\marginpar{\tiny mi sembra che l'accenno agli insiemi quasiconvessi, che non conoscevo e per i quali %non c'era definizione o riferimento, sia eccessivo}
%%%%%%%%%%%%%%%%%%%%%%%%
thus obtaining a function in $W^{1, \infty} (B)$, which coincides with $C^{0,1} (\ol{B})\supset C^{0,1} (\ol{\Om})$, since $B$ is convex.
%
%%%%%%%%%%%%%%%%%%
%the equivalence $W^{1,\infty} (E)=C^{0,1}( \ol{E} )$ holds true whenever $E$ is quasiconvex (for a %definition, see \cite{}). Moreover, since $u=0$ on $\Si$ we can extend it by $0$ outside $\Om$ to the %whole $B$, thus obtaining a function in $W^{1, \infty} (B)$, which coincides with $C^{0,1} (\ol{B})$, %being as $B$ a convex (and hence also quasiconvex) domain.
%%%%%%%%%%%%%%%
%
Thus, we let $L$ to be an upper bound\footnote{When  $\ol{\Si}$ and $\ol{T}$ intersect orthogonally, \cite[Proposition 3.5]{GX} ensures that $u \in C^{1,\ga} (\ol{\Om}) \cap W^{2,2} (\Om) $: their argument is based on spherical reflection.
%%%%%%%
%and may be used to obtain estimates for $L$.
%%%%%%%%%%%%%%
The global $C^{1,\ga} (\ol{\Om})$ regularity of $u$ is also guaranteed whenever $\Si$ is a capillary surface with contact angle $\te \in ( 0, \pi/2 )$: see \cite[Theorem 3.2]{JXZ}.} of the Lipschitz seminorm defined in \eqref{eq:seminorm}, i.e. $L\ge [u]_{C^{0,1}(\ol{\Om})}$.

Also, 
we present our stability results under the assumption that $\ol{\Si}$ and $\ol{T}$ intersect on $\La$ in a way that $\Om$ satisfies the $(\te,a)$-uniform interior cone condition, for given parameters $\te$ and $a$ (see Section \ref{sec:preliminary-estimates} for the definition).
We adopt this condition to avoid an excessively technical presentation.
Nevertheless, our arguments could be adapted and the same stability result of Theorem \ref{thm:Improvement1} below achieved in
%%%%%%%%%%%%%
%the more general case where $\Om$ is a \textit{John domain} 
%%%%%%%%%%%%%%%%%
more general cases (see Remark \ref{rem:John extension}).
\par
In order to measure the deviation of $\Si$ from a spherical cap, for a given point $z \in \RR^N$, we define two quantities,
\begin{equation*}
	\rho_e= \max_{x \in \ol{\Si} }{|x-z|} \quad\mbox{and}\quad \rho_i=\min_{x \in \ol{\Si} }{|x-z|},
\end{equation*}
so that we have:
$$
\ol{\Si} \subseteq \left[ \ol{B}_{\rho_e}(z) \setminus B_{\rho_i}(z) \right] \cap \ol{B}_+.
$$
The point $z$ must be conveniently chosen. A good choice of $z$ is a somewhat modified center of mass of $\Om$:
\begin{equation}
\label{eq:choice z old school}
	z = \frac{1}{|\Om|} \left\{ \int_\Om x \, dx - \int_T u(x) \, x \, dS_x  \right\}.
\end{equation}
With this choice, we have that the mean value of the field $\na h$ is zero. 
This will allow the use of certain suitable Hardy-Poincar\'e-type inequalities.
\par

We now present our stability results

Our most general quantitative estimates are contained in Theorem \ref{thm:stability-general}.
%%%%%%%%%%%%%%%%%%%
%and hold when $\Om$ satisfies a uniform interior cone condition. 
%%%%%%%%%%%%%%%%%
Here, we prefer to present three special instances of that result in three relevant situations, which better depict the dependence of the estimates on certain geometrical assumptions on the surface $\Si$. 
\par
In the next theorem, $\Si$ is not allowed to touch the flat part of $B_+$.

\begin{thm}[$\Si$ does not touch $\pa B_+\setminus\pa B$]
\label{thm:Improvement1}
Set $N\ge 2$. Let $\Om$ be a domain contained in $B_+$ and satisfying
the $(\te,a)$-uniform interior cone condition. Assume that there exists a positive number $m$ such that 
\begin{equation}
\label{dont-touch}
\ol{\Om}\subset\{ x\in \ol{B}_+: x_N\ge m\}.
\end{equation}
\par
Let $u \in W^{1,\infty}(\Om)\cap W^{2,2} (\Om)$ be the solution of \eqref{eq:problem} and assume that $L\ge [u]_{C^{0,1}(\ol{\Om})}$.
Moreover, let $R$ and $z$ be the number and point defined in \eqref{value-R-intro} and \eqref{eq:choice z old school}.
Then, it holds that
	\begin{equation*}
		\rho_e-\rho_i\le c \,
		\begin{cases}
			\nr u_\nu^2 - R^2 \nr_{1,\Si}^{1/2} \, \max \left\{ \log \left(\nr u_\nu^2 - R^2 \nr_{1,\Si}^{- 1/2 }  \right) , 1 \right\} \quad & \text{ for } N =2 ,
			\\
			\nr u_\nu^2 - R^2 \nr_{1,\Si}^{1/N} \quad & \text{ for } N \ge 3 ,
		\end{cases}
	\end{equation*}
for some non-negative constant $c=c(N, \te, a ,  L , m )$.
\end{thm}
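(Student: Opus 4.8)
The plan is to extract geometric control of $\overline{\Si}$ from the fundamental identity \eqref{fundamental-identity} via the following chain. Set $Q(x)=\tfrac12|x-z|^2-q_0$ with $z$ as in \eqref{eq:choice z old school} and $q_0$ chosen so that $\int_\Om h\,dx=0$, where $h=Q-u$; then the pointwise algebraic identity $|\na^2u|^2-(\De u)^2/N=|\na^2h|^2$ turns the left-hand side of \eqref{fundamental-identity} into $\int_\Om x_N(-u)\,|\na^2h|^2\,dx$. Under \eqref{dont-touch} we have $x_N\ge m$, and since $[u]_{C^{0,1}(\ol\Om)}\le L$ together with $u=0$ on $\Si$ gives $-u(x)\le L\,\dist(x,\Si)\le L\,\diam(\Om)\le 2L$, the weight $x_N(-u)$ is comparable, from above and below on a suitable subset, to a constant; more to the point, the quantity
\[
\left(\int_\Om x_N(-u)\,|\na^2h|^2\,dx\right)^{1/2}
\]
is an upper bound (after the identity \eqref{fundamental-identity} and an elementary estimate of the right-hand side integrand $u_\nu x_N-\lan X^q,\nu\ran$ by a constant $c(N,L)$, using $|X^q|\le c(N)$ on $\ol B_+$ and $|u_\nu|\le L$ on $\Si$) for $c(N,L)\,\nr u_\nu^2-R^2\nr_{1,\Si}^{1/2}$. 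So the first step is: $\nr \na^2 h\nr_{L^2(\Om,\,x_N(-u)dx)}\le c\,\nr u_\nu^2-R^2\nr_{1,\Si}^{1/2}$.

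The second step is to propagate this weighted second-order control down to a bound on the oscillation of $h$ on $\Si$. Because $z$ is chosen as the modified center of mass, $\int_\Om \na h\,dx=0$ (this is the computation promised after \eqref{eq:choice z old school}: expand $\na h=(x-z)-\na u$ and integrate $\na u$ by parts using $\De u=N$, $u=0$ on $\Si$, $u_\nu=u$ on $T$, $\nu=x$ on $T$). Hence a Poincaré-type inequality gives $\nr\na h-0\nr_{L^2}\lesssim\nr\na^2 h\nr_{L^2}$, and with $\int_\Om h\,dx=0$ another Poincaré step gives $\nr h\nr_{L^2}\lesssim\nr\na^2 h\nr_{L^2}$. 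These must be run with the weight $x_N(-u)$, which degenerates near $\Si$ (where $-u\to0$) and near $\La$; this is exactly where the Hardy–Poincaré-type inequalities alluded to in the introduction, and the $(\te,a)$-uniform interior cone condition, enter — the cone condition yields a John-domain / extension structure that makes the weighted inequalities hold with constants depending only on $N,\te,a$ (and $L,m$ through the weight bounds). Then one upgrades from an $L^2(\Om)$-bound on $h$ and $\na^2h$ to an $L^\infty$ or trace bound for $h$ on $\Si$: a weighted Sobolev/Morrey embedding (the source of the dimension-dependent exponent $1/N$, and of the logarithmic factor when $N=2$, where the Sobolev embedding into $L^\infty$ just fails) gives $\operatorname{osc}_{\Si} h\le c\,\nr\na^2h\nr_{L^2(\Om,x_N(-u)dx)}^{\theta_N}$ for the stated $\theta_N$ after interpolating against the a priori bound $\nr h\nr_{C^{0,1}}\le c(N,L)$. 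Combining with Step 1, $\operatorname{osc}_{\Si} h\le c\,\nr u_\nu^2-R^2\nr_{1,\Si}^{1/N}$ (resp.\ the $N=2$ form).

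The third step converts $\operatorname{osc}_{\Si} h$ into $\rho_e-\rho_i$. On $\Si$ we have $u=0$, so $h=Q=\tfrac12|x-z|^2-q_0$ there, whence $\rho_e^2-\rho_i^2=\max_{\ol\Si}|x-z|^2-\min_{\ol\Si}|x-z|^2=2\operatorname{osc}_{\ol\Si}h$. Since $\rho_e+\rho_i\ge 2\rho_i\ge 2m>0$ (the inner radius is bounded below using \eqref{dont-touch}: any point of $\ol\Si$ has $x_N\ge m$, and $z$ is controlled, so $|x-z|\ge$ some $c(N,L,m)>0$; a slightly more careful argument via the value \eqref{value-R-intro} of $R$ and $\rho_i\le R\le\rho_e$ also works), we get $\rho_e-\rho_i\le c\,\operatorname{osc}_{\ol\Si}h$, closing the estimate.

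The main obstacle I expect is Step 2: making the passage from the interior weighted $W^{2,2}$-seminorm to a boundary oscillation bound on $\Si$ rigorous when the weight $x_N(-u)$ vanishes on $\Si$ itself. One must either (i) prove a Hardy–Poincaré inequality with the degenerate weight $-u\sim\dist(\cdot,\Si)$ and a matching weighted trace embedding onto $\Si$, controlling constants through the cone condition, or (ii) first establish the bound on a slightly shrunk surface $\Si_\delta=\{-u=\delta\}$ where the weight is nondegenerate, then let $\delta\to0$ using the $C^{0,1}$-regularity of $h$ up to $\ol\Om$ and a covering/interpolation argument near $\La$; tracking the dependence of all constants only on $N,\te,a,L,m$, and correctly producing the endpoint logarithm for $N=2$, is the delicate technical core.
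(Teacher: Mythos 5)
Your high-level outline --- use the fundamental identity to bound a weighted $W^{2,2}$-seminorm of $h$, then propagate to an oscillation bound for $h$ on $\overline{\Si}$, then to $\rho_e-\rho_i$ --- matches the paper's strategy, but two steps have genuine gaps. In Step 3 you close the loop by claiming $\rho_e+\rho_i\ge 2\rho_i\ge 2m$, i.e.\ a lower bound on the \emph{inner} radius $\rho_i$. There is no such bound: nothing in the hypotheses prevents $z$ from lying on or near $\overline{\Si}$, so $\rho_i$ can be arbitrarily small (neither $x_N\ge m$ nor the alternative via $R$ rescues this, since $R$ is not bounded below by the data). The paper instead bounds $\rho_e$ from below: Lemma \ref{lem:relation_osc_rhoe-rhoi e Hess h} shows $d_\Om\le 2d_\Si\le 4\rho_e$ (using that extremal points of $\ol\Om$ can be joined through $\La=\ol\Si\cap\ol T$), hence $\rho_e-\rho_i\le\frac{2}{\rho_e}\operatorname{osc}_{\ol\Si}h\le\frac{8}{d_\Om}\operatorname{osc}_{\ol\Si}h\le\frac{8}{a}\operatorname{osc}_{\ol\Si}h$. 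No lower bound on $\rho_i$ is ever used.

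The other gap is precisely the part you flagged as the ``delicate technical core,'' and the paper resolves it in a way you did not identify. Rather than proving a weighted trace inequality onto $\Si$ for a weight that degenerates there, the paper first converts the weight $x_N(-u)\,dx$ into a pure power of the distance: the comparison Lemma \ref{lem:relationdist} gives $-u(x)\ge\frac12\,\de_\Ga(x)^2$ (by comparing $u$ with the torsion function of the inscribed ball $B_{\de_\Ga(x)}(x)$), and \eqref{dont-touch} gives $x_N\ge m$, so the left-hand side of \eqref{fundamental-identity} dominates $\frac{m}{2}\nr\de_\Ga\na^2h\nr_{2,\Om}^2$ (this is the second inequality of item (i) in Lemma \ref{lem:weighted-bounds}). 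Then, instead of a trace bound on $\Si$, one bounds the oscillation of $h$ on the whole closed set $\ol{\Om}$ --- which trivially dominates the oscillation on $\ol{\Si}$ --- using the interpolation estimate of Lemma \ref{lem:p>N + p<N and p=N} (source of the exponent $\frac{2}{N}$ on $\nr\na h\nr_p$ and of the $N=2$ logarithm), the Hardy--Poincar\'e Corollary \ref{cor:JohnPoincareaigradienti} with weight $\de_\Ga^\tau$ ($\tau=1$) applied to $\na h$ (where the choice \eqref{eq:choice z old school} of $z$ ensures $\int_\Om\na h\,dx=0$), and the explicit bound $\nr\na h\nr_{\infty,\Om}\le 2(L+1)$ of Remark \ref{rem:bound-grad-h}. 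This avoids the degenerate-weight trace issue entirely. (Your introduction of $q_0$ with $\int_\Om h\,dx=0$ is harmless but unnecessary, since only the oscillation of $h$ enters.)
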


In Section \ref{subsec:general stability} we show that the assumption \ref{dont-touch} can be removed at the cost of getting a slightly worse stability exponent, namely $1/(N+1)$ in place of $1/N$ for $N \ge 3$ (see Theorem \ref{thm:stability-general}). Such a generalization is non-trivial and requires a new and careful analysis, which is provided in Section \ref{subsec:general stability}.

The next result considers the case where $\Om$ satisfies an interior sphere condition relative to $B_+$. In fact, the same stability rate of Theorem \ref{thm:Improvement1} can also be obtained if \eqref{dont-touch} is dropped and replaced by the assumption that $\Om$ satisfies the \textit{strong $r_i$-uniform interior sphere condition relative to $B_+$}. Such a condition, which is introduced in Section \ref{subsec:some geometrical facts} following the spirit of \cite[Section 4.1]{Pog3}, is surely satisfied whenever $\ol{\Si}$ and $\ol{T}$ intersect orthogonally. 

\begin{thm}[$\Om$ satisfies a strong sphere condition]
\label{thm:Improvement2}
Set $N\ge 2$ and let $\Om$ be a domain contained in $B_+$. Assume that $\Om$ satisfies the $(\te,a)$-uniform interior cone condition and the strong $r_i$-uniform interior sphere condition relative to $B_+$.
\par
Let $u\in W^{1,\infty}(\Om)\cap W^{2,2} (\Om)$ be the solution of \eqref{eq:problem} and assume that $L\ge [u]_{C^{0,1}(\ol{\Om})}$.	Moreover, let $R$ and $z$ be the number and point defined in \eqref{value-R-intro} and \eqref{eq:choice z old school}. Then, it holds that
	\begin{equation*}
		\rho_e-\rho_i\le c \,
		\begin{cases}
			\nr u_\nu^2 - R^2 \nr_{1,\Si}^{1/2} \, \max \left\{ \log \left(\nr u_\nu^2 - R^2 \nr_{1,\Si}^{-1/2 }  \right) , 1 \right\} \quad & \text{ for } N =2 ,
			\\
			\nr u_\nu^2 - R^2 \nr_{1,\Si}^{1/N}  \quad & \text{ for } N \ge 3 ,
		\end{cases}
	\end{equation*}
for some non-negative constant $c=c(N, \te, a, L , r_i)$.
\end{thm}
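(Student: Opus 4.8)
The plan is to deduce this result from the general stability estimate of Theorem \ref{thm:stability-general} in exactly the same way as Theorem \ref{thm:Improvement1}, but replacing the role of the lower bound \eqref{dont-touch} on $x_N$ by the uniform interior sphere condition. The starting point is the fundamental identity \eqref{fundamental-identity}: writing $Q(x)=\frac12|x-z|^2-q_0$ with $z$ as in \eqref{eq:choice z old school} and $h=Q-u$, the left-hand side of \eqref{fundamental-identity} is the weighted seminorm $\int_\Om x_N(-u)\,|\na^2 h|^2\,dx$, while the right-hand side is controlled by $\nr u_\nu^2-R^2\nr_{1,\Si}$ times a bound for the bracket $[u_\nu x_N-\lan X^q,\nu\ran]$ on $\Si$; the latter is bounded by a constant depending only on $N$ and $L$ since $\Om\subset B_+$ and $|u_\nu|\le L$ on $\Si$. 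Hence
$$
\int_\Om x_N(-u)\,|\na^2 h|^2\,dx \le c(N,L)\,\nr u_\nu^2-R^2\nr_{1,\Si}.
$$
The next step is to convert this weighted bound on second derivatives of $h$ into a bound on the oscillation of $h=Q$ on $\Si$, i.e. on $\rho_e-\rho_i$. This is carried out via the Hardy--Poincar\'e-type inequalities of the preliminary sections, whose application requires: (i) that $\na h$ have zero mean over $\Om$ — guaranteed by the choice \eqref{eq:choice z old school} of $z$; (ii) a two-sided control of the weight $x_N(-u)$; and (iii) a trace/extension estimate relating $\sup_\Si |Q-\ol{Q}|$ to a Sobolev norm of $h$ on $\Om$, which is where the geometry of $\Om$ enters through the $(\te,a)$-uniform interior cone condition.

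The only genuinely new ingredient compared with Theorem \ref{thm:Improvement1} is controlling the weight $x_N(-u)$ from below on a substantial part of $\Om$ \emph{without} assuming $x_N\ge m$. Here the uniform interior sphere condition with radius $r_i$ does the job: at any point $x\in\Om$ one touches an interior ball $B_{r_i}(y)\subset\Om$, and comparing $u$ with the explicit solution on that ball (or using the barrier $\frac12(|x-y|^2-r_i^2)$ together with $\De u=N$ and $u\le 0$) yields a lower bound $-u(x)\ge$ (an explicit function of $\dist(x,\Si)$ and $r_i$) on an interior tubular region; combined with $x_N>0$ and the cone condition this produces a weight bound on a set of definite measure, uniformly in terms of $N,\te,a,L,r_i$. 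Feeding this into Theorem \ref{thm:stability-general}, whose statement already packages the passage from the weighted seminorm to $\rho_e-\rho_i$, the constant it delivers depends on $N,\te,a,L$ and on the lower bound for the weight, i.e. ultimately on $N,\te,a,L,r_i$; and the resulting exponents are $1/N$ for $N\ge 3$ and the $\log$-corrected $1/2$ for $N=2$, as claimed. The one subtlety is that, unlike the case \eqref{dont-touch}, the weight now degenerates near $\Si$, so the interior-sphere lower bound for $-u$ must be paired with the degeneracy of $x_N(-u)$ near $\Si$ carefully so that no extra loss in the stability exponent occurs — this is the step I expect to require the most care, and it is presumably the reason the hypotheses of Theorem \ref{thm:stability-general} are phrased so as to make both Theorem \ref{thm:Improvement1} and Theorem \ref{thm:Improvement2} fall out as special cases rather than proving each from scratch.

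Finally, one records that $R>0$ and that the constants are non-negative by construction, and that all quantities $\rho_e,\rho_i$ are computed with the specific $z$ in \eqref{eq:choice z old school}, so that the statement is exactly the one asserted. The bulk of the proof is therefore a verification that the hypotheses of Theorem \ref{thm:stability-general} are met under the uniform interior sphere assumption, together with the weight lower bound described above; the rest is quoting Theorem \ref{thm:stability-general}.
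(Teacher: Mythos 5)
Your proposal has a structural misconception that leads to a genuine gap: you route the proof through Theorem~\ref{thm:stability-general}, but that theorem cannot deliver the stated exponents. Its conclusion for $N\ge 3$ is $\nr u_\nu^2-R^2\nr_{1,\Si}^{1/(N+1)}$, and for $N=2$ it is $\nr u_\nu^2-R^2\nr_{1,\Si}^{1/(3+2\eta)}$ for any $0<\eta<1$ --- both strictly worse than the $1/N$ and $\log$-corrected $1/2$ that Theorem~\ref{thm:Improvement2} asserts. Theorem~\ref{thm:stability-general} is \emph{not} a master result of which Theorems~\ref{thm:Improvement1}--\ref{thm:Improvement3} are special cases; on the contrary, it is proved under weaker hypotheses precisely at the cost of a poorer rate, by a different mechanism (the parallel sets $\Om_\si$). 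Neither Theorem~\ref{thm:Improvement1} nor Theorem~\ref{thm:Improvement2} is proved via Theorem~\ref{thm:stability-general} in the paper. Your also-stated guess that ``the hypotheses of Theorem~\ref{thm:stability-general} are phrased so as to make both Theorem~\ref{thm:Improvement1} and Theorem~\ref{thm:Improvement2} fall out as special cases'' is therefore exactly backwards.

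The correct route is to use the special estimate Theorem~\ref{thm:oldschoolapproachW22_al<1}, which covers the range $0<\tau\le 1$ for the weight $\de_\Ga^\tau$. The one idea you are missing is how the interior sphere condition replaces the lower bound $x_N\ge m$ while keeping $\tau=1$. The cone condition alone only gives the geometric bound $-u(x)\ge\tfrac12\de_\Ga(x)^2$ of \eqref{eq:reldistgenerale}, which would force the larger $\tau=3/2$ when combined with $x_N\ge\de_\Ga(x)$; the uniform interior sphere condition upgrades this to the \emph{linear} lower bound $-u(x)\ge\tfrac{r_i}{2}\de_\Ga(x)$ of \eqref{eq:relationdist}. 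Combined with the elementary inequality $x_N\ge\de_\Ga(x)$, valid for $x\in\Om\subset B_+$, this gives $x_N(-u(x))\ge\tfrac{r_i}{2}\de_\Ga(x)^2$, hence (via \eqref{eq:inequalitygrezzaperstability}) the first inequality of item (ii) of Lemma~\ref{lem:weighted-bounds}:
\begin{equation*}
\nr \de_{\Ga}\,\na^2 h\nr_{2,\Om}^2\le \frac{L+2}{r_i}\,\nr u_\nu^2-R^2\nr_{1,\Si}.
\end{equation*}
This is precisely the $\tau=1$ input required by Theorem~\ref{thm:oldschoolapproachW22_al<1}. Plugging it in, together with the bound $\nr\na h\nr_{\infty,\Om}\le 2(L+1)$ of Remark~\ref{rem:bound-grad-h}, produces the claimed estimate with the exponents $1/N$ (for $N\ge 3$) and the logarithmic rate at $N=2$, and the constant depends on $N,\te,a,L,r_i$ as stated. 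Your worry about the degeneracy of the weight near $\Si$ is resolved exactly by this interplay between \eqref{eq:relationdist} and $x_N\ge\de_\Ga$, which keeps the exponent of $\de_\Ga$ at $1$; it is not something you ``hope'' works out, and it is not handled by Theorem~\ref{thm:stability-general} at all.
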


The rate of stability further improves if both additional assumptions are in force.

\begin{thm}[$\Om$ satisfies a strong sphere condition and $\Si$ does not touch $\pa B_+\setminus\pa B$]
\label{thm:Improvement3}
	Set $N\ge 2$ and let $\Om$ be a domain contained in $B_+$. Assume that $\Om$ satisfies the $(\te,a)$-uniform interior cone condition and the strong $r_i$-uniform interior sphere condition relative to $B_+$. In addition, suppose that there exists $m>0$ such that \eqref{dont-touch} holds.
\par
Let $u\in W^{1,\infty}(\Om)\cap W^{2,2} (\Om)$ be the solution of \eqref{eq:problem} and assume that $L\ge [u]_{C^{0,1}(\ol{\Om})}$. 
Moreover, let $R$ and $z$ be the number and point defined in \eqref{value-R-intro} and \eqref{eq:choice z old school}. Then, it holds that 
	\begin{equation*}
		\rho_e-\rho_i\le c \,
		\begin{cases}
			\nr u_\nu^2 - R^2 \nr_{1,\Si}^{ 1/2 } \quad & \text{ for } N=2 ,
			\\
			\nr u_\nu^2 - R^2 \nr_{1,\Si}^{ 1/2 } \, \max \left\{ \log \left( \nr u_\nu^2 - R^2 \nr_{1,\Si}^{ -1/2 }  \right) , 1 \right\}  \quad & \text{ for } N=3 ,
			\\
			\nr u_\nu^2 - R^2 \nr_{1,\Si}^{  1/(N-1) } \quad & \text{ for } N\ge 4 ,
		\end{cases}
	\end{equation*}
$c=c(N, \te, a , L , r_i , m)$.
\end{thm}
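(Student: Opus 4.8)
The plan is to run the same machinery already set up for Theorems \ref{thm:Improvement1} and \ref{thm:Improvement2}, but now with \emph{both} geometric hypotheses in force, so that we can afford the sharper choices at each pinch point. The starting point is the fundamental identity \eqref{fundamental-identity}. Writing $Q(x)=\tfrac12|x-z|^2-q_0$ with $z$ as in \eqref{eq:choice z old school} and a suitable constant $q_0$, and setting $h=Q-u$, the left-hand side of \eqref{fundamental-identity} equals $\int_\Om x_N(-u)\,|\na^2 h|^2\,dx$. Since $0\le -u\le L\,\dist(\cdot,\Si)$ and $x_N\ge m$ on $\ol\Om$ by \eqref{dont-touch}, the right-hand side of \eqref{fundamental-identity} is bounded, after using $|u_\nu|\le L$ and $|\lan X^q,\nu\ran|\le C(N)$ on $\Si$, by $C(N,L)\,\|u_\nu^2-R^2\|_{1,\Si}$. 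Thus
\[
\int_\Om x_N\,(-u)\,|\na^2 h|^2\,dx \;\le\; C(N,L)\,\|u_\nu^2-R^2\|_{1,\Si}=:C(N,L)\,\eta.
\]
The choice of $z$ in \eqref{eq:choice z old school} guarantees that $\na h$ has zero mean over $\Om$ (this is exactly the computation advertised after \eqref{eq:choice z old school}), which is what unlocks the Hardy–Poincaré-type inequalities of the preliminary sections.

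Next I would convert this weighted $W^{2,2}$-bound on $h$ into a bound on $\na h$ and then on the oscillation of $h=Q$ on $\Si$. Concretely: first apply a weighted Poincaré inequality (with weight $x_N(-u)$, or after bounding $x_N\ge m$ and $-u\le L\,d_\Si$, with the weight $d_\Si$) to pass from $\|\na^2 h\|$ to $\|\na h\|$ on $\Om$; here the interior sphere condition with radius $r_i$ is used precisely to get a John-domain/weighted-inequality constant depending only on $(N,r_i)$ and the cone parameters $(\te,a)$. Then a trace/Morrey-type argument controls $\mathrm{osc}_{\ol\Si} Q$ — equivalently $\tfrac12(\rho_e^2-\rho_i^2)$, hence $(\rho_e-\rho_i)$ up to a harmless factor since $\rho_e,\rho_i$ are comparable to $R$ — by a power of $\|\na h\|_{L^2(\Om, x_N(-u)dx)}$. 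The improvement over Theorems \ref{thm:Improvement1}–\ref{thm:Improvement2} comes from the fact that, with \eqref{dont-touch} in force, the weight $x_N$ is bounded below, so one loses no stability exponent from it; and with the interior sphere condition, the weight $-u$ behaves like $d_\Si$ near $\Si$ with optimal (linear, not sublinear) vanishing. Combining the best available Hardy–Poincaré exponents from both ingredients is what yields $\eta^{1/(N-1)}$ for $N\ge 4$, the $\eta^{1/2}$ endpoint for $N=2$, and the borderline logarithmic case for $N=3$ — the three regimes being dictated by where the relevant Sobolev/Poincaré embedding sits relative to the dimension.

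The main obstacle, as in the companion theorems, is the interplay between the degenerate weight $x_N(-u)$ and the geometry of $\Om$ near the edge $\La$ where $\ol\Si$ meets $\ol T$: one must produce weighted Hardy–Poincaré inequalities whose constants depend only on the stated parameters $(N,\te,a,L,r_i,m)$ and not on finer features of $\Om$, and then track the exponents through the chain $\|\na^2 h\|\rightsquigarrow\|\na h\|\rightsquigarrow \mathrm{osc}\,Q|_\Si$ without degradation. The two extra hypotheses here each remove one source of exponent loss, so the bookkeeping is the heart of the matter; everything else is an instantiation of Theorem \ref{thm:stability-general} with the sharp constants plugged in. I would close by noting that $\rho_e-\rho_i$ upper-bounds the uniform distance of $\Si$ from the spherical cap \eqref{spherical-cap} centered at this $z$, so the displayed estimate is indeed a quantitative form of Guo and Xia's rigidity result.
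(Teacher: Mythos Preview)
Your high-level strategy is correct and is essentially the paper's: combine the fundamental identity with the lower bound on the weight $x_N(-u)$ afforded by both hypotheses, then feed the resulting weighted Hessian bound into the Hardy--Poincar\'e/Sobolev machinery. The paper's proof is in fact one line: apply the second inequality in item (ii) of Lemma~\ref{lem:weighted-bounds}, then Theorem~\ref{thm:oldschoolapproachW22_al<1} with $\tau=1/2$, then Remark~\ref{rem:bound-grad-h}.

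That said, your write-up has a few concrete issues. First, the upper bound $-u\le L\,\dist(\cdot,\Si)$ plays no role here; what you need is the \emph{lower} bound $-u\ge \tfrac{r_i}{2}\de_\Ga$ from Lemma~\ref{lem:relationdist} (note: distance to $\Ga$, not $\Si$), which together with $x_N\ge m$ gives $x_N(-u)\ge \tfrac{m r_i}{2}\de_\Ga$ and hence controls $\|\de_\Ga^{1/2}\na^2 h\|_{2,\Om}$. Second, and more importantly, you never pin down the weight exponent $\tau=1/2$; this is precisely what drives the trichotomy in the statement, since in Theorem~\ref{thm:oldschoolapproachW22_al<1} the threshold $\tau=2-N/2$ becomes $N=3$, with $N=2$ falling into the unweighted Morrey regime and $N\ge 4$ into the interpolation regime with $\ka_{N,1/2}=2/(N-1)$. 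Third, your closing reference to ``an instantiation of Theorem~\ref{thm:stability-general} with the sharp constants plugged in'' is wrong: that theorem uses parallel sets, handles only $\tau=3/2$, and yields strictly worse exponents; the correct tool is Theorem~\ref{thm:oldschoolapproachW22_al<1}. Fix these three points and your sketch becomes the paper's proof.
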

\par
The paper is organized as follows. In Section \ref{sec:fundamental}, we derive our fundamental integral identity \eqref{fundamental-identity}. In Section \ref{sec:preliminary-estimates}, we prepare the proofs of Theorems  \ref{thm:Improvement1}--\ref{thm:Improvement3} and \ref{thm:stability-general},
by collecting a pointwise estimate from below for $-u$ in terms of the distance of a point $x$ to the boundary $\Ga$ and 
some Poincar\'e-type estimates in weighted spaces. 
These adapt to the constrained case $\Om\subset B_+$ similar bounds obtained in \cite{MP, MP2, MP3} (see also \cite{DPV}).
Finally, in Section \ref{sec:stability}, we carry out the proofs of Theorems  \ref{thm:Improvement1}--\ref{thm:Improvement3} and \ref{thm:stability-general}.

\section{A fundamental identity}
\label{sec:fundamental}

In this section, we shall prove the identity \eqref{fundamental-identity}. 

For later use, we preliminarly recall some easily verified properties of the Killing field $X^q$ defined in \eqref{Killing} and the solution $u$ of \eqref{eq:problem}.
In fact, it holds that
\begin{equation}
\label{Killing-properties}
\dv X^q= N x_N \ \mbox{ in } \RR^N; \ \  X^q= x_N x - e_N, \
\lan X^q, \nu\ran=\lan X^q, x\ran=0 \ \text{ on } \ S,
\end{equation}
\begin{equation}
\label{u-further-properties}
\na (\De u)=0 \ \mbox{ in } \ \Om, \quad \na u=u_\nu\,\nu \ \mbox{ on } \ \Si, \quad
\lan \na^2 u\,\nu, \om\ran=0 \ \mbox{ on } \ T,
\end{equation}
for every direction $\om$ which is tangential to $T$. The last two conditions follow from the fact that $\Si$ and $T$ are level surfaces for $u$ and $u_\nu-u=\lan x,\na u\ran-u$.
\par
The proof of \eqref{fundamental-identity} is inspired by calculations carried out in \cite{GX}. 
Essentially, those are a combination of repeated integrations by parts and the application of conditions \eqref{Killing-properties} and \eqref{u-further-properties}.

We begin by adapting to our aims and notations an identity in \cite[Proposition 3.3]{GX}. We introduce the so-called \textit{$P$-function} by setting:
\begin{equation}
\label{P-function}
P= \frac{1}{2}\, |\na u|^2 -u \ \mbox{ in } \ \Om.
\end{equation}

\begin{lem}[A Pohozaev-type identity]\label{lem:Pohozaev type}
%%%%%%%%%%%%%
%Let $u$ be the unique solution of the mixed Dirichlet-Steklov boundary value problem 
%\eqref{eq:problem} such that 
%%%%%%%%%%%%%%%%
Let $u\in W^{1,\infty}(\Om) \cap W^{2,2} (\Om)$ be the solution of \eqref{eq:problem}. Then, the following identity holds:
\begin{equation}
\label{eq:Pohozaevadhoc}
N \int_\Om x_N P\, dx = \frac{1}{2} \int_\Si u_{\nu}^2 \lan X^q, \nu\ran \, dS_x.
\end{equation}
\end{lem}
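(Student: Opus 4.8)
The plan is to prove \eqref{eq:Pohozaevadhoc} by the classical Pohozaev device adapted to the Killing field $X^q$, exploiting that $\dv X^q = N x_N$ and that $X^q$ is tangent to $S$ (hence to $T$). I would start from the quantity $\dv\!\left( P\,X^q \right)$ and integrate over $\Om$: by the divergence theorem,
\[
\int_\Om \dv\!\left( P\,X^q \right) dx = \int_\Ga P \, \lan X^q,\nu\ran \, dS_x = \int_\Si P\,\lan X^q,\nu\ran\,dS_x + \int_T P\,\lan X^q,\nu\ran\,dS_x .
\]
On $T$ we have $\nu = x$ and, by \eqref{Killing-properties}, $\lan X^q,\nu\ran = \lan X^q,x\ran = 0$, so the $T$-contribution drops out. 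On $\Si$ we have $u=0$, hence $P = \tfrac12|\na u|^2 = \tfrac12 u_\nu^2$ there (using $\na u = u_\nu\,\nu$ from \eqref{u-further-properties}), which produces exactly the right-hand side of \eqref{eq:Pohozaevadhoc} up to expanding the left-hand divergence.

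Next I would expand the integrand: $\dv(P X^q) = P\,\dv X^q + \lan \na P, X^q\ran = N x_N P + \lan \na P, X^q\ran$. The first term is precisely the left-hand side of the claimed identity, so the whole matter reduces to showing $\int_\Om \lan \na P, X^q\ran\,dx = 0$. Here one uses $\na P = \na^2 u\,\na u - \na u$ together with $\De u = N$. I would integrate $\int_\Om \lan \na^2 u\,\na u, X^q\ran\,dx$ by parts once more, moving one derivative off $\na^2 u$; the boundary terms land on $\Si$ and $T$, and the interior term regroups — using $\na(\De u)=0$ from \eqref{u-further-properties} and the conformal Killing property of $X^q$ (whose symmetrized gradient is $x_N$ times the identity, i.e. the deformation tensor of $X^q$ is a multiple of $\delta_{ij}$) — so that it cancels against $\int_\Om \lan \na u, X^q\ran\,dx = \tfrac1N\int_\Om \De u\,\lan \na u,X^q\ran\,dx$ after one further integration by parts. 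The boundary contributions are handled by the third relation in \eqref{u-further-properties}, namely $\lan \na^2 u\,\nu,\om\ran = 0$ for $\om$ tangent to $T$, which kills the tangential part of $X^q$ on $T$, while on $\Si$ the term is absorbed since $u=0$ and $\na u$ is normal there; I expect the $\Si$-boundary terms arising in this step to combine into $\tfrac12\int_\Si u_\nu^2\lan X^q,\nu\ran$ consistently with the first computation (this is really the statement that both routes give the same answer).

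The main obstacle is bookkeeping the boundary terms rigorously under the limited regularity $u\in W^{1,\infty}(\Om)\cap W^{2,2}(\Om)$ with a possible singularity of $\na^2 u$ near $\La=\Si\cap T$. The clean way is to note, as the paper does for \eqref{fundamental-identity}, that $|\na^2 u|^2 - (\De u)^2/N = |\na^2 h|^2$ where $h = Q - u$ for a spherically symmetric quadratic $Q$; equivalently, it is cleanest to observe that $u - \tfrac12 q$-type corrections reduce everything to integrations by parts that are valid in $W^{2,2}$ since the $\na^2 u$ terms always appear paired with a bounded factor and the boundary integrals are over the smooth pieces $\Si$ and $T$ with $\La$ a lower-dimensional set of zero $\cH^{N-1}$-measure. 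So in the write-up I would first justify (or cite from \cite{GX}) that all the integrations by parts below are licit under the standing assumption $u\in W^{1,\infty}\cap W^{2,2}$, then carry out the two-step computation above; the algebraic identity $\lan \na P, X^q\ran = \dv(\text{something})$ modulo terms that integrate to zero is the engine, and the conformal Killing property of $X^q$ is exactly what makes the "something" have no net flux except the desired $\Si$-term.
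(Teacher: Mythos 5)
Your opening move is sound and mirrors the paper's in spirit: integrate $\dv(P X^q)$, use $\lan X^q,\nu\ran=0$ on $T$ and $P=\tfrac12 u_\nu^2$ on $\Si$ to get the boundary term, and expand $\dv(P X^q)=N x_N P+\lan\na P,X^q\ran$ to reduce the lemma to $\int_\Om\lan\na P,X^q\ran\,dx=0$. But that reduction is only a rearrangement of the divergence theorem: no information from the PDE has been used, and the claim $\int_\Om\lan\na P,X^q\ran\,dx=0$ is \emph{verbatim equivalent} to the lemma. The danger is that the ``obvious'' integration by parts is circular: writing $\lan\na^2u\,\na u,X^q\ran=\tfrac12\lan\na(|\na u|^2),X^q\ran$ and integrating by parts gives $\int_\Om\lan\na P,X^q\ran\,dx=\tfrac12\int_\Si u_\nu^2\lan X^q,\nu\ran\,dS-N\int_\Om x_N P\,dx$, which just restates what you want to show, with zero progress.

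To actually use $\De u=N$ and the conformal Killing property (deformation tensor of $X^q$ equal to $x_N\,\delta_{ij}$), you must move the \emph{other} index of $\na^2 u$, the one contracted with $\na u$ in $u_{ij}u_j X^q_i$. When you do this, the boundary term $\int_\Ga u_\nu\lan\na u,X^q\ran\,dS$ produces, on $T$, the genuinely nontrivial contribution $\int_T u\,\lan\na_T u,X^q\ran\,dS$ (using $u_\nu=u$ on $T$ and $X^q$ tangent to $T$). This term involves the \emph{gradient} of $u$, not the Hessian, so it is \emph{not} killed by $\lan\na^2u\,\nu,\om\ran=0$ as your sketch asserts. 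You also do not get a clean cancellation of volume terms: residual integrals proportional to $\int_\Om u\,x_N\,dx$ and $\int_T u^2 x_N\,dS$ remain. The paper sidesteps the circularity by producing a differential identity that expresses $N x_N P$ \emph{directly} as a divergence (not $\dv(P X^q)$, which only isolates $N x_N P$ up to the problematic $\lan\na P,X^q\ran$), and then disposes of the $T$-terms with the tangential divergence theorem on $T$ applied to $u^2(e_N)_T$, using ${\dv}_T\bigl((e_N)_T\bigr)=-(N-1)\,x_N$ and $u=0$ on $\La$. That tangential step is the crux and is absent from your plan; without it, and without a specific divergence-form identity replacing your ``the interior term regroups,'' the argument does not close.
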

%\marginpar{\tiny Ho aggiunto qui il commento sulla regolarit\`a sufficiente per l'identit\`a}
\begin{rem}
	{\rm
		Being as $u \in W^{1,\infty}(\Om) \cap W^{2,2} (\Om)$, all the integration by parts performed in this section are allowed (see, e.g., the version of the divergence theorem stated in \cite[Proposition 3.2]{GX}).
	}
\end{rem}
\begin{proof}[Proof of Lemma \ref{lem:Pohozaev type}]
The proof of \cite[Proposition 3.3]{GX} can be summarized and reorganized as follows.
By straightforward computations, we see that the following differential identity holds true:
\begin{multline*}
N\,x_N\,P=\dv\left\{\lan X^q, \na u\ran \,\na u-N\,u\,X^q-\frac12\,|\na u|^2 X^q\right\}+ \\
(N-1)\,\dv\left\{x_N\,u\,\na u-\frac12\, u^2 e_N\right\}.
\end{multline*}
\par
Next, we integrate on $\Om$ and use the divergence theorem. We have that
\begin{multline*}
N\,\int_\Om x_N\,P\,dx=\int_\Si\lan X^q,\na u \ran\,u_\nu\,dS_x+\int_T \lan X^q, \na u\ran\,u_\nu\,dS_x+ \\
-\frac12\int_\Si u_\nu^2 \lan X^q, \nu \ran\,dS_x+ 
(N-1) \int_T x_N\,u\,u_\nu\,dS_x-\frac12\,(N-1) \int_T x_N\,u^2 dS_x.
\end{multline*}
Here, we have used that $u=0$ on $\Si$ and $\lan X^q, \nu\ran=0$ on $T$. 
\par
Now, we use that $\na u=u_\nu\,\nu$ on $\Si$ and $u_\nu=u$ on $T$, and hence infer that
\begin{multline*}
N\,\int_\Om x_N\,P\,dx= \\
\frac12 \int_\Si u_\nu^2 \lan X^q,\nu \ran\,dS_x +
\int_T \lan - (e_N)_T, \na_T u \ran\,u\,dS_x+ 
\frac12\,(N-1) \int_T x_N\,u^2 dS_x .
\end{multline*}

Here, we have also noticed that
\begin{multline*}
\lan X^q, \na u\ran\,u_\nu=\lan X^q, u_\nu\,\nu+\na u-u_\nu\,\nu\ran\,u= \\
\lan X^q, \na u-u_\nu\,\nu\ran\,u=  \lan - (e_N)_T, \na_T u\ran\,u \ \mbox{ on } \ T .
\end{multline*}
where with $ (e_N)_T$ and $\na_T u$ we denote the tangential components of $e_N$ and $\na u$ on $T$.

\par
Thus, we are left to prove that the two integrals on $T$ sum up to zero. 
%\marginpar{\tiny Tutto ok. Ho aggiustato un pochino qui in fondo. L'unica cosa che forse potremmo aggiungere sarebbe che $\na_T u \in W^{1,2}(T)$ dal teorema di traccia (visto che $u \in W^{2,2}(\Om)$) citando ad esempio Grisvard ma per me si puo' fare a meno e lasciare sottointeso.}
This ensues by applying the divergence theorem on the surface $T$:
\begin{multline*}
0=\int_\La u^2 \lan  (e_N)_T , \nu_\La\ran\,d\ell_x=\int_T {\dv}_T (u^2  (e_N)_T  )\,dS_x= \\
\int_T \left\{u^2 {\dv}_T \left( (e_N)_T  \right) + 2\,\lan (e_N)_T , \na_T u\ran\,u\right\}dS_x.
\end{multline*}
Here, ${\dv}_T$ denotes the tangential divergence.
% of a tangential vector field and we have that 
The first integral is zero, because $u=0$ on $\La$.
%%%%%%%%%%%%%%%%%%%%%%%%
% $u$ is continuous on $\Ga$ and $u=0$ on $\Si\supset\La$.
%%%%%%%%%%%%%%%%%%%
The conclusion follows by noting that ${\dv}_T \left( (e_N)_T \right)= - (N-1)\,x_N$. 
\end{proof}

We are now ready to prove the main result of this section.

\begin{thm}[Fundamental identity]
\label{th:fundamental-identity-c}
Let $u\in W^{1,\infty}(\Om) \cap W^{2,2} (\Om)$ be the solution of \eqref{eq:problem}. Then, for any given constant $c$, the following identity  holds:
\begin{equation}
\label{fundamental-identity-c}
\int_\Om x_N (-u) \left\{ | \na^2 u|^2- \frac{(\De u)^2}{N} \right\} dx = \
\frac{1}{2} \int_\Si \left( u_\nu^2 - c^2 \right) \bigl[ x_N\,u_\nu - \lan X^q, \nu\ran \bigr]  dS_x.
\end{equation}
\end{thm}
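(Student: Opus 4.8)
The plan is to establish \eqref{fundamental-identity-c} by combining the Pohozaev-type identity of Lemma \ref{lem:Pohozaev type} with a second integral identity involving the $P$-function, namely an ``$L^2$-type'' identity of the form
\begin{equation*}
\int_\Om x_N\,\lan \na P, \na u\ran\,dx = \text{boundary terms on }\Si\text{ and }T,
\end{equation*}
which is the analogue in this constrained setting of the Rellich--Pohozaev calculus used in \cite{MP, MP2, MP3} for the Alexandrov and Serrin problems. First I would recall the pointwise identity $\De P = |\na^2 u|^2 - (\De u)^2/N + (\De u)^2/N - \De u = |\na^2 u|^2 - \De u$ (using $\na(\De u)=0$ from \eqref{u-further-properties} and $\De u = N$), more precisely the Bochner-type formula $\De P = |\na^2 u|^2 - 1$ after inserting $\De u = N$; the key nonnegative quantity $|\na^2 u|^2 - (\De u)^2/N$ then appears once one writes $|\na^2 u|^2 = |\na^2 u|^2 - (\De u)^2/N + N$. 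The strategy is to multiply $\De P$ (or rather the trace-free Hessian squared) by the weight $x_N(-u)$ and integrate by parts twice, producing the left-hand side of \eqref{fundamental-identity-c} plus boundary contributions on $\Si$ (where $u=0$, $\na u = u_\nu\nu$) and on $T$ (where $u_\nu = u$, $\lan X^q,\nu\ran = 0$, and $\lan \na^2 u\,\nu,\om\ran = 0$ for tangential $\om$).

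The cleanest route, and the one I expect the authors to follow, is to work directly with the vector field
\begin{equation*}
Z = x_N\,(-u)\,\na P - x_N\,P\,\na u + (\text{correction terms making the }T\text{-boundary integral vanish}),
\end{equation*}
compute $\dv Z$ using $\De u = N$, $\na(\De u) = 0$, $\dv(x_N\,\na u) = N\,x_N + u_{x_N}$, and the Bochner formula for $\De P$, so that $\dv Z$ equals (up to sign) the trace-free Hessian density $x_N(-u)\{|\na^2 u|^2 - (\De u)^2/N\}$ plus a divergence-form multiple of $x_N\,P$ that is handled by Lemma \ref{lem:Pohozaev type}. Applying the divergence theorem on $\Om$ and using the already-recorded boundary behavior of $u$, $X^q$, and $\na^2 u$ on $\Si$ and $T$ will reduce everything to an integral over $\Si$ of an expression in $u_\nu$, $x_N$, and $\lan X^q,\nu\ran$; matching it against $\tfrac12\int_\Si u_\nu^2\lan X^q,\nu\ran\,dS_x$ from \eqref{eq:Pohozaevadhoc} produces the $u_\nu^2\,[x_N u_\nu - \lan X^q,\nu\ran]$ term. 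Finally, since $\De u = N$ forces $\int_\Si u_\nu\,[x_N u_\nu - \lan X^q,\nu\ran]\,dS_x$ to be computable via the divergence theorem applied to $X^q$ and to $x_N\,x$ (using $\dv X^q = N x_N$, $\lan X^q,\nu\ran = 0$ on $T$, and $u_\nu = u = 0$-free geometry), one checks that $\int_\Si [x_N u_\nu - \lan X^q,\nu\ran]\,dS_x$ is independent of the problem data in exactly the way needed so that replacing $R^2$ by an arbitrary constant $c^2$ changes nothing; concretely, $\int_\Si x_N u_\nu\,dS_x = \int_\Si \lan X^q,\nu\ran\,dS_x$, which is why the $c^2$ (and in particular the $R^2$) term drops out entirely, giving \eqref{fundamental-identity} as the special case $c = R$ under \eqref{extra-condition}.

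The main obstacle I anticipate is bookkeeping on the boundary portion $T$: one must choose the correction terms in $Z$ (the analogues of the $(N-1)(x_N u\,\na u - \tfrac12 u^2 e_N)$ term in the proof of Lemma \ref{lem:Pohozaev type}) so that the $T$-integrals cancel exactly, and this cancellation relies delicately on the Steklov condition $u_\nu = u$, on $\lan X^q,\nu\ran = 0$ on $S$, on the tangential condition $\lan \na^2 u\,\nu,\om\ran = 0$ on $T$, and on a tangential divergence theorem on $T$ with $u = 0$ on $\La$ killing the boundary term on $\La$ — just as in Lemma \ref{lem:Pohozaev type}. A secondary technical point is justifying all the integrations by parts at the $W^{1,\infty}\cap W^{2,2}$ regularity level and near the singular set $\La$; as the Remark after Lemma \ref{lem:Pohozaev type} notes, the version of the divergence theorem in \cite[Proposition 3.2]{GX} covers exactly this, so I would simply invoke it. Everything else is the routine (if lengthy) differential-identity computation; I would state the key pointwise identity for $\dv Z$ as a displayed equation, verify it by expanding, and then integrate.
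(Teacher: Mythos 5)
Your plan follows the same strategy as the paper's own proof: reduce to $c=0$ via $\int_\Si [x_N u_\nu - \lan X^q,\nu\ran]\,dS_x = 0$ (the paper gets this by applying the divergence theorem to $X^u-X^q$ where $X^u = x_N\na u - u\,e_N$), then integrate $x_N u\,\De P$ by parts and combine with Lemma \ref{lem:Pohozaev type}. You have correctly identified all the structural ingredients and the crucial observation that makes the free constant $c$ possible, which is the most subtle conceptual point. Two issues remain.

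First, a small arithmetic slip: the Bochner computation gives $\De P = |\na^2 u|^2 + \lan\na u,\na\De u\ran - \De u = |\na^2 u|^2 - N$, not $|\na^2 u|^2 - 1$. The paper writes this as $\De P = |\na^2 u|^2 - (\De u)^2/N$, which is the same thing because $\De u = N$, and it is this form that makes the trace-free Hessian density appear directly on the left-hand side of \eqref{fundamental-identity-c} without any further splitting.

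Second, and this is the genuine gap: the entire substance of the proof is exactly the ``bookkeeping on $T$'' that you explicitly defer. After Gauss--Green and Lemma \ref{lem:Pohozaev type} one is left to show that
\begin{equation*}
2\int_\Om u_{x_N} P\,dx + \int_T x_N u\,P_\nu\,dS_x - 2\int_T x_N u\,P\,dS_x = 0.
\end{equation*}
Your proposal names an undetermined field $Z$ with ``correction terms making the $T$-boundary integral vanish'' but never produces it. The paper supplies the concrete identity
\begin{equation*}
\dv\left\{ \left[ (2\,u\,P+u^2)\,I - u^2\,\na^2 u \right] e_N \right\} = 2\,u_{x_N}\,P \ \mbox{ in } \ \Om,
\end{equation*}
integrates it, and then uses the pointwise computations $P_\nu = u\,\lan\na^2 u\,\nu,\nu\ran - u$ on $T$ (via $\na u = u_\nu\,\nu + \om$, $\lan\na^2 u\,\nu,\om\ran = 0$, and $u_\nu = u$) and $\lan\na^2 u\,e_N,\nu\ran = x_N\,\lan\na^2 u\,\nu,\nu\ran$ on $T$ (via $e_N = x_N\,\nu - X^q$ on $S$ and the tangency of $X^q$). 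Without these three identities the $T$-integrals do not cancel, so the proof as written is an outline, not a derivation. Your plan would succeed if you supplied them; as it stands, the key step is missing.
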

\begin{proof}
Taking the vector field $X^u=x_N\,\na u-u\,e_N$, we compute that
\begin{equation}
	\label{Xu-properties}
	\dv(X^u)=N\,x_N \ \mbox{ in } \ \Om, \quad \lan X^u, \nu\ran=0 \ \mbox{ on } \ T, \quad
	\lan X^u, \nu\ran=x_N\,u_\nu \ \mbox{ on } \ \Si ,
\end{equation}	
and hence, by the divergence theorem and \eqref{Killing-properties}, 
$$
0=\int_\Om\dv( X^u - X^q )\,dx=\int_\Ga\lan X^u -  X^q , \nu\ran\,dS_x=
\int_\Si [x_N\,u_\nu-\lan X^q,\nu\ran]\,dS_x .
$$
Thus, it is sufficient to prove \eqref{fundamental-identity-c} for $c=0$.
\par
Next, observe that
\begin{equation*}
\De P = | \na^2 u|^2 -  \frac{(\De u)^2}{N},
\end{equation*}
and hence, the Gauss-Green formula gives:
\begin{multline*}
\int_\Om x_N u\, \De P \, dx =\int_\Om \De (x_N u)\,P\,dx+\int_\Om \dv\bigl\{ x_N u\,\na P-P\,\na (x_N u)\bigr\}\,dx=
\\
\int_\Om [2\,u_{x_N}+x_N\,\De u]\,P\,dx + \int_\Ga \bigl\{ x_N u\,P_\nu-[\lan e_N,\nu\ran\,u+x_N\,u_\nu]\,P\bigr\} dS_x= \\
\int_\Om [2\,u_{x_N}+N\,x_N]\,P\,dx -\int_\Si x_N u_\nu P \, dS_x
+\int_T \bigl\{ x_N u \, P_\nu-x_N\,(u+u_\nu) P  \bigr\}  dS_x.
\end{multline*}
Here, we used that $\De u=N$ in $\Om$, $u=0$ on $\Si$, and $\lan e_N,\nu(x)\ran=x_N$ for $x\in T$. 
Consequently, we deduce that
\begin{multline*}
%\label{eq:bigidentity0}
\int_\Om x_N u\, \De P \, dx =
2 \int_\Om u_{x_N}  P \, dx + \frac{1}{2} \int_\Si \lan X^q, \nu\ran u_{\nu}^2 \, dS_x+\\
 -\frac12 \int_\Si x_N u^3_\nu\, dS_x
+\int_T x_N u \, P_\nu\,dS_x-2\,\int_T x_N u\, P \, dS_x,
\end{multline*}
since $u_\nu=u$ on $T$ and $u=0$ and $|\na u|=u_\nu$ on $\Si$. Here, the second summand at the right-hand side is obtained by applying \eqref{eq:Pohozaevadhoc}.
All in all, we have that
\begin{multline*}
\int_\Om x_N u\, \De P \, dx =
-\frac12 \int_\Si u^2_\nu\, \bigl[x_N u_\nu-\lan X^q, \nu\ran\bigr]\, dS_x+ \\
2 \int_\Om u_{x_N}  P \, dx+\int_T x_N u \, P_\nu\,dS_x-2\,\int_T x_N u\, P \, dS_x,
\end{multline*}
and hence we are left to prove that the last three integrals sum up to zero.
\par
The integral on $\Om$ can be treated by integrating on $\Om$ the differential identity:
$$
\dv\left\{ \left[ (2\,u\,P+u^2)\,I-u^2 \na^2u \right] e_N \right\}  =2\,u_{x_N} P.
$$
Here, $I$ denotes the $N\times N$ identity matrix.
In this calculation, we have used the first identity in \eqref{u-further-properties}. Thus, by the definition of $P$ and divergence theorem, we get:
\begin{equation}
\label{volume-integral}
2 \int_\Om u_{x_N}  P \, dx=
%\int_T \bigl\{\bigl[u\,|\na u|^2-u^2\bigr]\lan e_N,\nu\ran-u\,\lan\na^2 u\, e_N,\nu\ran\bigr\} dS_x= \\
\int_T \bigl\{x_N\,\bigl[u\,|\na u|^2-u^2\bigr] -u^2\,\lan\na^2 u\, e_N,\nu\ran\bigr\} dS_x.
\end{equation}
Again, we used that $u=0$ on $\Si$ and $\lan e_N,\nu(x)\ran=x_N$ for $x\in T$.
% Integration by parts gives
%\begin{multline}
%\label{eq:integrazione1}
%\int_\Om u_{x_N}  P \, dx = \int_\Om \dv{(u\, P e_N)} \, dx  -  \int_\Om u\, P_{x_N} \, dx
%= \\
%\int_T  u\, P x_N \, dS_x - \int_\Om u\, P_{x_N} \, dx ,
%\end{multline}
%since $u=0$ on $\Si$ and 
%\begin{equation}\label{eq:ENnu=xsuT}
%<e_N,\nu> = x_N \quad \text{ on } \, T .
%\end{equation}
%
%By direct computation we get
%\begin{multline*}
%2 \, u P_N= 
%2 \, u \left\{ <\na^2u \na u, e_N> -u_N \right\}= 
%\\
%\dv{ \left( u^2 \, \na^2 u e_N \right)} - u^2 (\De u)_N - \dv{ \left( u^2 e_N \right) }   = \dv{ \left( u^2 \, \na^2 u e_N \right)} - \dv{ \left( u^2 e_N \right) } ,
%\end{multline*}
%where the last equality follows, being $\De u$ constant.
%Thus, by using that $u=0$ on $\Si$ and \eqref{eq:ENnu=xsuT}, \eqref{eq:integrazione1} gives
%\begin{multline}\label{eq:factor1}
%\int_\Om 2 u_N  P \, dx = 
%\\
%\int_T  u \, x_N |\na u|^2 \, dS_x - 2 \int_T  u^2 x_N  \, dS_x - \int_T  u^2  <\na^2 u e_N, \nu> \, dS_x + \int_T  u^2  x_N \, dS_x =
%\\
%\int_T  u \, x_N |\na u|^2 \, dS_x - \int_T u^2  x_N  \, dS_x - \int_T  u^2  <\na^2 u \, e_N, \nu> \, dS_x .
%\end{multline}

%(ii) By \eqref{eq:Pohozaevadhoc} we have that
%\begin{equation}\label{eq:factor2}
%\int_\Om N x_N P \, dx = \frac{1}{2} \int_\Si <X^q, \nu> u_{\nu}^2 \, dSx .
%\end{equation}
%
%(iii) Since $u=0$ on $\Si$, we compute:
%\begin{equation}\label{eq:factor3}
%\int_\Si (x_N u)_\nu P \, dS_x = \frac{1}{2} \int_\Si x_N u_\nu | \na u |^2 \, dS_x.
%\end{equation}

\par
Next, we directly compute on $T$ that
$$
%\label{eq:PnusuT}
P_\nu = \lan \na^2 u \, \na u, \nu\ran - u_\nu =\lan \na^2 u \, (u_\nu \nu+\om), \nu\ran - u_\nu= 
 u \,\lan\na^2 u \, \nu, \nu\ran - u.
$$
In the first equality, we have decomposed $\na u$ into the sum of its normal and tangential components $u_\nu\,\nu$ and $\om$. In the second equality, we used the third identity in \eqref{u-further-properties} and 
%where in the second equality we used that
%\begin{equation}\label{eq:forseanchesenza}
%< \na^2 u \, \na u, \nu> = u_\nu < \na^2 u \,  \nu, \nu> \quad \text{on } \, T
%\end{equation}
that $u_\nu = u$ on $T$.
%Equality \eqref{eq:forseanchesenza} follows because for every vector field $e^T$ tangent to $T$ it holds that 
%\begin{equation}\label{eq:proprietatangentesuT}
%< \na^2 u \,  e^T, \nu> = 0 \quad \text{ on } \, T;
%\end{equation} 
%this can be easily proved by differentiating in the direction $e^T$ the equation 
%$$u= u_\nu \quad \text{ on } T ,$$
%that is:
%$$ <\na u, e^T>= <\na (<\na u, \nu>), e^T >;$$
%by recalling that since we are on $T$, $\nu=x $, we easily compute 
%$$ \na ( < \na u , \nu> )= \na^2 u \, \nu + \na u$$
%and hence \eqref{eq:proprietatangentesuT} follows.
Moreover, we observe that on $T$ it holds that
$$
\lan \na^2 u\,e_N, \nu\ran=\lan \na^2 u\, \left(  x_N \, \nu - X^q \right) , \nu\ran=x_N\,\lan\na^2 u \, \nu, \nu\ran,
$$
by the second identity in \eqref{Killing-properties} (being as $\nu(x)=x$ on $S$) and the third identity in \eqref{u-further-properties} (being as $X^q$ tangent to $T$).
\par
Therefore, with this and the identity for $P_\nu$ in mind, we finally conclude that
$$
2 \int_\Om u_{x_N}  P \, dx+\int_T x_N u \, P_\nu\,dS_x-2\,\int_T x_N u\, P \, dS_x=0,
$$
thanks to
\eqref{volume-integral}. This was what we were left to prove.
\end{proof}

A convenient choice of the constat $c$ in \eqref{fundamental-identity-c} is suggested by the following proposition. 

\begin{prop}[The value of $R$]
\label{prop:value-R}
Let $u \in W^{1,\infty}(\Om) \cap W^{2,2} (\Om)$ be the solution of~\eqref{eq:problem}.  
\par
If $u_\nu=R$ on $\Si$, then we have that
\begin{equation}
\label{value-R}
R=N\,\frac{\int_\Om x_N\,dx}{\int_\Si x_N\,dS_x}
=\frac{N\,|\Om|}{|\Si|}\,\frac{c_N^\Om}{c_N^\Si},
\end{equation}
where $c_N^E$ denotes the $N$-th coordinate of the center of mass of a set $E$. 
\end{prop}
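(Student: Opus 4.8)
The plan is to test the equation $\De u = N$ against the harmonic function $x_N$ and exploit the boundary conditions. Concretely, I would apply Green's second identity on $\Om$ to the pair $u$ and $x_N$: since $\De x_N = 0$, this yields
$$
N\int_\Om x_N\,dx = \int_\Om x_N\,\De u\,dx = \int_\Ga \bigl(x_N\,u_\nu - u\,\lan e_N,\nu\ran\bigr)\,dS_x.
$$
The integration by parts is legitimate because $u \in W^{1,\infty}(\Om)\cap W^{2,2}(\Om)$, exactly as recorded in the remark following Lemma \ref{lem:Pohozaev type}.

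Next I would evaluate the boundary term piece by piece. On $\Si$ one has $u = 0$, so the contribution of $\Si$ collapses to $\int_\Si x_N\,u_\nu\,dS_x$. On $T$ one has $u_\nu = u$ and, since $\nu(x) = x$ on $S \supset T$, also $\lan e_N,\nu\ran = x_N$; hence the integrand on $T$ equals $x_N\,u - x_N\,u = 0$. Therefore
$$
N\int_\Om x_N\,dx = \int_\Si x_N\,u_\nu\,dS_x.
$$
Equivalently, this is the identity $\int_\Si[x_N\,u_\nu - \lan X^q,\nu\ran]\,dS_x = 0$ already established in the proof of Theorem \ref{th:fundamental-identity-c}, combined with $\dv X^q = N x_N$ from \eqref{Killing-properties} and $\lan X^q,\nu\ran=0$ on $T$; either route gives the same conclusion. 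Imposing the overdetermination $u_\nu = R$ on $\Si$ and solving for $R$ produces the first equality in \eqref{value-R}.

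For the second equality, I would merely recall that $\int_\Om x_N\,dx = |\Om|\,c_N^\Om$ and $\int_\Si x_N\,dS_x = |\Si|\,c_N^\Si$ by the very definition of the $N$-th coordinate of the center of mass (with $|\Si|$ the $(N-1)$-dimensional Hausdorff measure of $\Si$), and substitute. There is essentially no genuine obstacle here: the whole statement is a one-line consequence of the divergence theorem and the prescribed boundary data, and the only point deserving a word of care — the justification of the integration by parts — is already guaranteed by the running regularity assumption $u\in W^{1,\infty}(\Om)\cap W^{2,2}(\Om)$.
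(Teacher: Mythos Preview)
Your proof is correct and essentially identical to the paper's. The paper phrases the computation as the divergence theorem applied to the vector field $X^u = x_N\,\na u - u\,e_N$, for which $\dv X^u = N x_N$ and $\lan X^u,\nu\ran = x_N u_\nu - u\,\lan e_N,\nu\ran$; this is exactly Green's second identity for the pair $(u,x_N)$ written in vector form, so the two arguments coincide line for line.
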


\begin{proof}
By using the divergence theorem and \eqref{Xu-properties}, we compute that
$$
N\,\int_\Om x_N\,dx=\int_\Om\dv(X^u)\,dx=\int_\Ga \lan X^u, \nu\ran\,dS_x=
\int_\Si x_N\,u_\nu\,dS_x=R\,\int_\Si x_N\,dS_x .
$$
Thus, \eqref{value-R} follows at once.
\end{proof}

As a consequence of this proposition and Theorem \ref{th:fundamental-identity-c}, we obtain a more general version of Guo and Xia's rigidity result. 

\begin{cor}
Let $u \in W^{1,\infty}(\Om) \cap W^{2,2} (\Om)$ be the
%%%%%%
%unique 
%%%%%%%%%%%%%%%%
solution of \eqref{eq:problem}.  
\par 
If the right-hand side of \eqref{fundamental-identity-c} is non-positive for some $c\in\RR$, then 
$$
u(x)=\frac12\,\left(|x-z|^2-R^2\right) \ \mbox{ for } \ x\in\Om
$$
and $\Si$ must be the spherical cap 
$\{ x\in B_+: |x-z|=R\}$,
where $R$ is given by \eqref{value-R}, and $z=(z', z_N)$ is such that
$|z|=\sqrt{1+R^2}$ and $|z'|\le 1$.
The same conclusion holds true, in particular, if $u_\nu$ is constant on $\Si$. 
\end{cor}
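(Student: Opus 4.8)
The plan is to combine Theorem \ref{th:fundamental-identity-c} with the sign information already collected in the excerpt. First I would recall that, by \cite[Proposition 2.3]{GX}, the solution of \eqref{eq:problem} satisfies $u<0$ in $\Om$, and that $x_N>0$ throughout $\Om$ since $\Om\subset B_+$. Hence the weight $x_N(-u)$ is strictly positive a.e. in $\Om$, while the integrand $|\na^2 u|^2-(\De u)^2/N$ is pointwise non-negative by the Cauchy–Schwarz inequality (applied to the eigenvalues of $\na^2u$, or equivalently to $\na^2 u$ and $I$). Therefore the left-hand side of \eqref{fundamental-identity-c} is always $\ge 0$.

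Next, assuming the right-hand side of \eqref{fundamental-identity-c} is non-positive for some $c\in\RR$, the left-hand side must be simultaneously $\ge 0$ and $\le 0$, hence equal to zero. Since the weight is strictly positive, this forces $|\na^2 u|^2-(\De u)^2/N=0$ a.e.\ in $\Om$, i.e.\ equality in Cauchy–Schwarz, which means $\na^2 u=\lambda I$ for some scalar function; taking traces and using $\De u=N$ gives $\lambda=1$, so $\na^2 u=I$ on $\Om$. Integrating, $u(x)=\tfrac12|x-z|^2+\text{const}$ for some $z\in\RR^N$. The boundary condition $u=0$ on $\Si$ then shows $\Si\subseteq\{x:|x-z|^2=\text{const}\}$, i.e.\ $\Si$ lies on a sphere centered at $z$; writing the constant as $R^2$ with $R\ge 0$ we get $u(x)=\tfrac12(|x-z|^2-R^2)$ and $\Si\subset\{x\in B_+:|x-z|=R\}$ (with $R>0$ since $\Si$ is a nonempty hypersurface). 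The value of $R$ is then identified with $N\,|\Om|\,c_N^\Om/(|\Si|\,c_N^\Si)$ by Proposition \ref{prop:value-R}, once we check $u_\nu=R$ on $\Si$: indeed on the sphere $|x-z|=R$ we have $\na u=x-z$ and the outward normal to $\Si$ is $(x-z)/R$ (pointing away from $z$, consistently with $u<0$ inside), so $u_\nu=|x-z|=R$ is constant, and Proposition \ref{prop:value-R} applies.

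It remains to pin down the location of $z=(z',z_N)$. To get $|z|=\sqrt{1+R^2}$, I would use the condition $u_\nu=u$ on $T\subset S$: on $S$ we have $\nu(x)=x$, so $u_\nu=\lan x-z,x\ran=|x|^2-\lan z,x\ran=1-\lan z,x\ran$, while $u=\tfrac12(|x-z|^2-R^2)=\tfrac12(1-2\lan z,x\ran+|z|^2-R^2)$; equating these two for all $x\in T$ (which spans enough directions, being an $(N-1)$-dimensional piece of $S$) forces $1=\tfrac12(1+|z|^2-R^2)$, i.e.\ $|z|^2=1+R^2$, and the linear-in-$x$ terms automatically match. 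The constraint $|z'|\le 1$ should follow from the requirement that the spherical cap $\{|x-z|=R\}\cap B_+$ actually meets $S$ and bounds a nonempty domain inside $B_+$; concretely, $\Om$ being a nonempty open subset of $B_+$ with $T=\Ga\setminus\ol\Si\subset S$ nonempty forces the two spheres $S$ and $\{|x-z|=R\}$ to intersect, and a short computation with $|z|^2=1+R^2$ shows their intersection is nonempty iff $|z'|\le 1$ (the spheres are in fact orthogonal, as remarked after \eqref{spherical-cap}). The final sentence, that the same conclusion holds when $u_\nu$ is constant on $\Si$, is immediate: if $u_\nu\equiv c_0$ on $\Si$ then the right-hand side of \eqref{fundamental-identity-c} vanishes identically with the choice $c=c_0$, so in particular it is non-positive.

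The main obstacle I anticipate is not any single step but the passage from ``$\na^2u=I$ a.e.'' to the clean polynomial form together with the geometric identification of $z$: one must be slightly careful that $\Om$ is connected (so $z$ and the additive constant are globally well-defined, not just locally), that the regularity $u\in W^{2,2}$ genuinely upgrades to $u$ being a quadratic polynomial (Sobolev functions with $\na^2 u=I$ a.e.\ on a connected open set coincide with a quadratic a.e., hence everywhere after choosing the continuous representative guaranteed by $u\in C^{0,1}(\ol\Om)$), and that the sign of the normal derivative is consistent so that Proposition \ref{prop:value-R} yields $R$ with the correct sign. The orthogonality/intersection bookkeeping giving $|z'|\le 1$ is elementary but should be stated carefully since it encodes the ``lenticular'' geometry of Figure \ref{fig:symmetric-domain}.
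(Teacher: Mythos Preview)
Your argument is correct and follows the same route as the paper: non-negativity of the left-hand side of \eqref{fundamental-identity-c} together with the hypothesis forces equality in Cauchy--Schwarz, hence $\na^2 u=I$, and the boundary data on $\Si$ and $T$ then pin down $R$ and $|z|$ exactly as you describe. The one slip is your justification of $|z'|\le 1$: with $|z|^2=1+R^2$ the spheres $S$ and $\{|x-z|=R\}$ \emph{always} intersect (indeed orthogonally), so ``nonempty intersection'' gives no constraint. The paper's argument is instead that $\ol{T}$ must lie in the closed upper hemisphere: since $u<0$ on $T$ one has $T\subset\{x\in S:\lan x,z\ran>1\}$, and if $|z'|>1$ this open spherical cap contains the equatorial point $(z'/|z'|,0)$, hence also nearby points with $x_N<0$, a contradiction.
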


\begin{proof}
By Theorem \ref{th:fundamental-identity-c}, our assumption clearly gives that the volume integral at the left-hand side of \eqref{fundamental-identity-c} must be zero. Since $u<0$ in $\Om$ by \cite[Proposition 2.3]{GX} and $x_N>0$ in $\Om\subset B_+$, we infer that 
$$
0\equiv |\na^2 u|^2- \frac{(\De u)^2}{N}=|\na^2 u|^2- \frac{\lan\na^2 u, I\ran^2}{N} \ \mbox{ in } \ \Om.
$$
Thus, the Cauchy-Schwarz inequality for the $N^2$-vectors $\na^2 u$ and $I$ holds with the sign of equality. As already observed in \cite{MP2}, we have that $u$ must be a quadratic polynomial of the form:
$$
u(x)=\frac12\, \left( |x-z|^2-q_0 \right) \ \mbox{ for some } \ q_0\in\RR.
$$
Since $u=0$ on $\Si$, we infer that $ q_0 >0$ and $\Si$ must equal $\{x\in B_+: |x-z|=\sqrt{q_0}\}$ --- a spherical cap.
\par
We now determine $q_0$ and $z$.
On one hand, observe that
\begin{multline*}
N\int_\Om x_N\,dx=\int_\Om \dv (X^u)\,dx=\int_\Ga \lan X^u, \nu\ran\,dS_x= \\
\int_\Si x_N\,u_\nu\,dS_x=\int_\Si x_N\,|x-z|\,dS_x=\sqrt{q_0}\,\int_\Si x_N\,dS_x,
\end{multline*}
i.e. we have that $q_0=R^2$. In particular, we infer that $u_\nu=R$ on $\Si$.
On the other hand, for $x\in T$, we must have that
$$
0=u_\nu(x)-u(x)=\lan x-z, \nu\ran-\frac12\,(|x-z|^2- q_0 )=\frac12\,(1+ q_0 -|z|^2),
$$
being as $\nu(x)=x$ for $x\in T$. Hence, $|z|=\sqrt{1+q_0}=\sqrt{1+R^2}$. Finally, we have that $|z'|\le 1$, since $\ol{T}$ is required to be contained in the upper hemisphere of $\pa B_+$.
\par
If $u_\nu$ is constant on $\Si$, then Proposition \ref{prop:value-R} tells us that the constant must equal the number $R$ in \eqref{value-R}. Choosing $c=R$ gives the the right-hand side of \eqref{fundamental-identity-c} is zero.
\end{proof}

\begin{rem}
{\rm
It is just an exercise to check that any spherical cap of the form specified in the corollary meets $T$ orthogonally.
}\end{rem}

\section{Weighted Sobolev-type bounds }
\label{sec:preliminary-estimates}

In this section, we collect some notations, definitions, and preliminary lemmas. We will provide the proofs only when they are not available in the literature.
\par
Given $\theta \in \left( 0, \pi /2 \right]$ and $a >0$, we say that a set $E$ satisfies the \textit{$(\theta, a )$-uniform interior cone condition}, if for every $x \in \pa E$ there is a unit vector $\om=\om_x$ such that the cone with vertex at the origin, axis $\om$, opening width $\te$, and height $a$ defined by
$$
\cC_{\om}=\left\{ y \, : \, \langle y , \om \rangle > |y| \cos(\theta) , \, |y|<a \right\}
$$
is such that
\begin{equation}\label{def:cone strong}
w + \cC_{\om}  \subset E \ \text{ for every } \ w \in B_a (x) \cap \ol{E} .
\end{equation}
Such a condition is equivalent to Lipschitz-regularity of the domain; more precisely, it is equivalent to the strong local Lipschitz property of Adams \cite[Pag 66]{Ad} and to the uniform Lipschitz regularity in \cite[Section III]{Ch} and \cite[Definition 2.1]{Ru}.
\par
In the sequel, we shall always consider a domain $\Om\subset B_+$ that satisfies this cone condition.
We then denote by $C^{0, 1} (\ol{\Om})$ the class of Lipschitz continuous functions on $\ol{\Om}.$  If $u\in C^{0, 1} (\ol{\Om})$, we set $L$ to be the Lipschitz constant of $u$ in $\ol{\Om}$, i.e.
\begin{equation}
\label{eq:seminorm}
	L = \left[ u \right]_{C^{0,1}(\ol{\Om})} = \sup \left\{ \frac{|u(x_1) - u(x_2)|}{|x_1 - x_2|} \, : \,  x_1,x_2 \in \ol{\Om}, \, x_1 \neq x_2 \right\}.
\end{equation}
%%%%%%%%%%%%%%%%%%%%%%%%%%%%%%%
%\par
%We also recall that a domain $\Om$ satisfies the \textit{uniform interior sphere condition} with radius %$r_i>0$ if, for every $p \in \pa \Om$, there exists a ball of radius $r_i$ contained in $\Om$  such %that its closure intersects $\pa \Om$ only at $p$.
%%%%%%%%%%%%%%%%%%%%%%%%%%
%%%%%%%%%%%%%%%%%%%%%%%%%%
%This condition is strictly stronger than the cone condition adopted here. 
%In fact, it is easy to check that for $C^2$ domains the latter is verified  with
%\begin{equation}\label{eq:sphere implies cone}
%	\te = \pi/4 \quad \text{ and } \quad a=r_i .
%\end{equation}
%%%%%%%%%%%%%%%%%%%%%%%%%%%%%%%%%%%%%%%

%%%%%%%%%%%%%%%%%%%%%%%%%%%%%%%%%%%%%%%%%%%%%%%%%%%%%%%%%%%%%%%%%%%%%%%%%

The Hardy-Poincar\'{e}-type inequalities in the lemma and corollary below are adapted from \cite[Section 3.2]{Pog} and \cite[Lemma 2.1]{MP3} and can be deduced by the works of Bojarski \cite{Bo} and Hurri-Syrj\"anen \cite{HS1, HS}.  For a domain $E\subset \RR^N$, we denote by $d_E$ its diameter.

%\footnote{Forse SI PUO' ELIMINARE IL CASO p=r e al=0 dalle Poincar\'e se decidiamo di togliere il caso con $\de^{1/2}$, per cui ci si puo' forse riferire a \cite{MP3}. Lo usavamo solo per gli imbedding (se $\al > 1/2$ non ci si puo' arrivare). Si puo' eliminare lo stesso, visto che per cono interno sfrutto \cite{MP6} in quel caso, dunque la Poincar\`e serve solo su gradiente. }

\begin{lem}
\label{lem:John-two-inequalities}
	Let $E \subset \RR^N$ be a bounded domain satisfying the $(\te,a)$-uniform interior cone condition.
\par
Consider three numbers $r, p, \al$ such that, either
	\begin{equation}\label{eq:conditionHS}
		1 \le p \le r \le \frac{N\,p}{N-p\,(1 - \al )} , \quad p\,(1 - \al)<N , \quad 0 \le \al \le 1 ,
	\end{equation}
	or
	\begin{equation}\label{eq:conditionBS}
		r = p \in \left[ 1, \infty \right) , \quad \al=0.
	\end{equation}
	Then, there exists a positive constant, $c=c(N, r, p, \al, \te, a, d_E)$ such that
	\begin{equation}
		\label{John-harmonic-poincare}
		\nr f - f_E \nr_{r,E} \le c\, \nr \de_{\pa E}^{\al} \, \na f  \nr_{p, E},
	\end{equation}
	for every function $f \in L^1_{loc}(E)$ such that $\de_{\pa E}^{\al} \, \na f \in L^p (E)$. Here, $f_E$ denotes the mean value of $f$ on $E$.
	
	If $E \subset B_+$, the dependence of $c$ on $d_E$ can be removed, being as $d_E \le 2$.
\end{lem}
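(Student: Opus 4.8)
The plan is to deduce \eqref{John-harmonic-poincare} from the theory of weighted Poincar\'e--Sobolev inequalities on John domains, the point being that the $(\te,a)$-uniform interior cone condition forces $E$ to be a John domain with quantitative control of the relevant geometric constant, after which the estimate is exactly (the weighted version of) a classical inequality.

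First I would make precise that the $(\te,a)$-uniform interior cone condition implies the John property, with a John constant controlled by $N$, $\te$, $a$, $d_E$. Fix an interior point $x_0\in E$ at distance comparable to $a$ from $\pa E$ (such a point exists, since the admissible cone $\cC_\om$ itself contains a ball whose radius depends only on $\te$ and $a$). Given $x\in E$: if $\dist(x,\pa E)$ is already at least a fixed multiple of $a$, connect $x$ to $x_0$ by a rectifiable path of length $\le d_E$ staying away from $\pa E$ at a definite distance; if instead $x$ is close to $\pa E$, use \eqref{def:cone strong} at $x$ itself, namely that the cone $x+\cC_{\om_x}$ lies in $E$, so that travelling along its axis $\om_x$ gains distance from $\pa E$ proportionally to arclength until one reaches distance $\sim a$, and then proceed as before. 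Concatenating, every $x\in E$ is joined to $x_0$ by a \emph{carrot} curve $\ga$ with $\ell(\ga)\le c_J$ and $\dist(\ga(t),\pa E)\ge c_J^{-1}\,\ell(\ga|_{[0,t]})$, where $c_J=c_J(N,\te,a,d_E)$. Equivalently, one may simply invoke that our cone condition is the strong local Lipschitz property (as recalled after \eqref{def:cone strong}), and that a bounded Lipschitz domain is a John domain with constant controlled by its Lipschitz character.

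With $E$ a John domain, I would then quote the literature. For the borderline unweighted case \eqref{eq:conditionBS} ($r=p$, $\al=0$), the inequality \eqref{John-harmonic-poincare} is Bojarski's Poincar\'e inequality on John domains \cite{Bo}. For the weighted range \eqref{eq:conditionHS}, it is the weighted (Sobolev--)Poincar\'e inequality of Hurri-Syrj\"anen \cite{HS1, HS}, whose admissible exponents are precisely $1\le p\le r\le Np/(N-p(1-\al))$ with $p(1-\al)<N$ and $0<\al\le1$, and with the weight $\de_{\pa E}^{\al}$ acting on the gradient. In both statements the constant depends only on $N$, the exponents $r,p,\al$, and the John constant $c_J$; combined with the previous paragraph this yields $c=c(N,r,p,\al,\te,a,d_E)$. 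If one prefers a self-contained derivation, the same bounds assemble as in \cite[Section 3.2]{Pog} and \cite[Lemma 2.1]{MP3}: a Boman-type chain decomposition of $E$ into Whitney-like cubes (whose chain length is governed by $c_J$), a local Poincar\'e inequality on each cube, and a summation with the powers of $\dist(\cdot,\pa E)$ absorbed by the weight.

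Finally, when $E\subset B_+$ one has $d_E\le d_{B_+}\le 2$; since the constant produced above is nondecreasing in $d_E$ (a larger domain only forces longer carrot curves and longer chains), one may replace $d_E$ by $2$ and absorb it, leaving $c=c(N,r,p,\al,\te,a)$. The only genuinely delicate point is the first step: carrying out the carrot-curve (or Boman-chain) construction \emph{uniformly} over all $x\in E$, in particular keeping $c_J$ under control when $x$ ranges over a full neighbourhood of $\pa E$ --- which is exactly where the strong formulation \eqref{def:cone strong}, rather than a pointwise cone condition, is needed --- together with checking that the normalization conventions of \cite{Bo, HS1, HS} match the scaling displayed in \eqref{eq:conditionHS}--\eqref{eq:conditionBS}, rescaling $E$ to unit diameter if necessary.
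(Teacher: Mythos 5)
Your proof follows essentially the same route as the paper's: reduce the $(\te,a)$-uniform interior cone condition to a quantitative $b$-John condition with $b$ controlled by $N,\te,a,d_E$, and then invoke the weighted Poincar\'e inequalities of Bojarski \cite{Bo} and Hurri-Syrj\"anen \cite{HS1, HS} on John domains, exactly as the paper does via \cite[Lemma 2.1, Corollary 2.3, Remark 2.4]{MP3} together with Lemma \ref{lem:cone condition strong implies John} in the appendix. The one step your sketch glosses over, and which the paper supplies through Lemma \ref{lem:Ruiz connected parallel} (after Ruiz and Chenais), is that joining two well-interior points by a path staying a definite distance from $\pa E$ requires knowing that the parallel set $\{x\in E:\de_{\pa E}(x)>\si\}$ is connected for small $\si$.
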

%%%%%%%%%%%%%%%%%%%%%%%%%%%%%%%%%%%%%%%%%%%%%%%
%\vskip.1cm
%A domain $E$ in $\RR^N$ is a {\it $b$-John domain}, with $b \ge 1$, if each pair of distinct points %$x_0$ and $x_1$ in $E$ can be joined by a curve $\psi: [0,1] \to E$ such that
%$\psi(0)=x_0$, $\psi(1)=x_1$, and
%\begin{equation*}
%b\,\de_{\pa E} (\psi(t)) \ge \min\left\{ |\psi(t) - x_0|, |\psi(t) - x_1| \right\}.
%\end{equation*}
%A curve satisfying the previous inequality is called a {\it John curve}.
%%%%%%%%%%%%%%%%%%%%%%%%%%%%%%%%%%%%%%%%%%%%%%%%%%%%%%%%%%%%%%%%%%%%
\begin{cor}\label{cor:JohnPoincareaigradienti}
	Let $E\subset\RR^N$, $N\ge 2$, be a bounded domain satisfying the $(\te,a)$-uniform interior cone condition and let $f$ be a function such that
	%%%%%%%%%%%%%%%%%%%%%%%%%%
	%$\na f \in L^1(E;\RR^N)$ 
	%%%%%%%%%%%%%%%
	 $\na f \in L^1_{loc}(E)$ and $\de_{\pa E}^{\al} \, \na^2 f \in L^p(E)$. Consider three numbers $r, p, \al$ 
	satisfying either \eqref{eq:conditionHS} or \eqref{eq:conditionBS}.
	If
	$$\int_E \na f \, dx = 0,$$
	%
	%$$(\na f)_{E}=0,$$
	%
	then it holds that
	\begin{equation*}
		\nr \na f \nr_{r, E} \le c \, \nr \de_{\pa E}^{\al} \, \na^2 f  \nr_{p, E},
	\end{equation*}
	where $c$ is the same constant appearing in \eqref{John-harmonic-poincare}.
\end{cor}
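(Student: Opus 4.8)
The plan is to reduce Corollary \ref{cor:JohnPoincareaigradienti} to Lemma \ref{lem:John-two-inequalities} applied to each partial derivative of $f$. Write $g = \na f$, viewed as a vector-valued function with components $g_j = \pa_j f$, $j = 1, \dots, N$. The hypothesis $\int_E \na f \, dx = 0$ says precisely that the mean value $(g_j)_E$ of each component over $E$ vanishes. Moreover $\de_{\pa E}^\al \, \na g_j = \de_{\pa E}^\al \, \na(\pa_j f)$ is (a row of) $\de_{\pa E}^\al \, \na^2 f$, so the integrability hypothesis $\de_{\pa E}^\al \, \na^2 f \in L^p(E)$ guarantees that each $g_j$ is in $L^1_{\loc}(E)$ with $\de_{\pa E}^\al \, \na g_j \in L^p(E)$, which is exactly what is needed to invoke \eqref{John-harmonic-poincare}.

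First I would fix $j$ and apply Lemma \ref{lem:John-two-inequalities} to the scalar function $g_j = \pa_j f$, with the same triple $(r,p,\al)$ (which satisfies \eqref{eq:conditionHS} or \eqref{eq:conditionBS} by assumption). Since $(g_j)_E = 0$, this yields
\begin{equation*}
\nr \pa_j f \nr_{r,E} \le c \, \nr \de_{\pa E}^\al \, \na(\pa_j f) \nr_{p,E},
\end{equation*}
with $c = c(N,r,p,\al,\te,a)$ the very constant in \eqref{John-harmonic-poincare} (the dependence on $d_E$ being absorbed once $E \subset B^+$, or otherwise carried along). Then I would sum over $j = 1, \dots, N$ and use the equivalence of norms on $\RR^N$ (respectively $\RR^{N \times N}$) together with the pointwise bounds $|\pa_j f| \le |\na f|$ and $|\na(\pa_j f)| \le |\na^2 f|$ to pass from the sum of the component estimates to the single estimate
\begin{equation*}
\nr \na f \nr_{r,E} \le c \, \nr \de_{\pa E}^\al \, \na^2 f \nr_{p,E},
\end{equation*}
possibly after enlarging $c$ by a dimensional factor — but since the statement allows $c$ to depend on $N$ and claims it is ``the same constant appearing in \eqref{John-harmonic-poincare},'' I would instead be slightly careful and note that, choosing the Euclidean norms on $\RR^N$ and on matrices consistently, one has $\sum_j \nr \pa_j f \nr_{r,E}^{\,?}$ controlled directly; the cleanest route is to apply the lemma componentwise and then observe $\big(\sum_j \nr \pa_j f\nr_{r,E}^2\big)^{1/2}$ dominates $\nr \na f\nr_{r,E}$ up to Minkowski/Hölder in the summation index, matching the right-hand side since $\sum_j |\na \pa_j f|^2 = |\na^2 f|^2$.

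The only mildly delicate point — and the one I would treat as the ``main obstacle,'' though it is really a bookkeeping matter rather than a genuine difficulty — is verifying that the regularity hypotheses of Lemma \ref{lem:John-two-inequalities} are met by each $g_j$: namely that $g_j \in L^1_{\loc}(E)$, which follows from $\na f \in L^1_{\loc}(E)$, and that $\de_{\pa E}^\al \, \na g_j \in L^p(E)$, which follows from $\de_{\pa E}^\al \, \na^2 f \in L^p(E)$ since $|\na g_j| = |\na \pa_j f| \le |\na^2 f|$ a.e. Once this is in place, the corollary follows immediately, with the same constant $c$, because the summation over the $N$ components can be arranged so as not to worsen the constant (or, if one is content with a dimension-dependent factor, the argument is even shorter). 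No new idea beyond Lemma \ref{lem:John-two-inequalities} is required.
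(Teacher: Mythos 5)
Your proof is correct and takes essentially the same componentwise route that the paper (following \cite{MP3}) uses: apply the scalar Hardy--Poincar\'e inequality \eqref{John-harmonic-poincare} to each partial derivative $\partial_j f$, using $\int_E \na f\,dx=0$ to kill the mean values, and then assemble. The only point worth flagging is the one you already noticed: applying Lemma \ref{lem:John-two-inequalities} as a black box and summing with $|\na f|\le\sum_j|\partial_j f|$, $|\na\partial_j f|\le|\na^2 f|$ inevitably introduces a dimensional factor (roughly $N$), so the constant is not literally ``the same'' unless one instead runs the vector-valued argument inside the proof of the Lemma (i.e., apply the underlying representation/potential estimate for $f-f_E$ componentwise and take Euclidean norms before integrating). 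Since $c$ already depends on $N$, this is immaterial for the paper's purposes, and your treatment of the hypotheses ($\partial_j f\in L^1_{\loc}$, $\de_{\pa E}^\al\na\partial_j f\in L^p$) is exactly what is needed.
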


\begin{rem}[On the proof of Lemma \ref{lem:John-two-inequalities} and Corollary \ref{cor:JohnPoincareaigradienti}]\label{rem:on the proof of John and Poincare}
{\rm	
Lemma \ref{lem:John-two-inequalities} and Corollary \ref{cor:JohnPoincareaigradienti} hold true in the more general case where $E$ is a John domain: we refer the reader \cite[proof of item(i) of Lemma 2.1 and item (i) of Corollary 2.3]{MP3}) for details. 
%%%%
%Even though \cite[item(i) of Lemma 2.1 and item (i) of Corollary 2.3]{MP3} were stated for harmonic %function, the statement given here is contained in   
%%%%%%%%%%%%%%%%%%%%%%%%%%%%%%%%%%%%%%
	Roughly speaking, a domain is a $b$-John domain if it is possible to travel from one point of the domain to another without going too close to the boundary (see Section \ref{appendix:cone and John} for the precise definition).
The class of John domains contains Lipschitz domains but also very irregular domains with fractal boundaries as, e.g., the Koch snowflake.
	
For $b$-John domains, (see \cite[items (i),(ii) of Remark 2.4]{MP3}), the following explicit bounds for the constant $c$ hold true:
	\begin{eqnarray*}
&&c \le k_{N,\, r, \, p,\, \al} \, b^N |E|^{\frac{1-\al}{N} +\frac{1}{r} - \frac{1}{p} } , \quad \text{if $r, p, \al$ are as in \eqref{eq:conditionHS}}, \\
&&c \le k_{N, \, p} \, b^{3N(1 + \frac{N}{p})} \, d_E , \quad \text{if $r,p, \al$ are as in \eqref{eq:conditionBS}}.
	\end{eqnarray*}
Of course, the volume appearing in the first inequality can be easily estimated by means of $|E| \le |B|\, d_E^N$.
Moreover, as we show in Lemma \ref{lem:cone condition strong implies John}, if a domain $E$ satisfies the $(\te,a)$-uniform interior cone condition, then it is a $b$-John domain and $b$ can be explicitly estimated in terms of $a, \te, d_E$ only.

We thus obtain that \eqref{John-harmonic-poincare} holds true with some constant $c$ that
%
%possibly 
%
depends only on $N, r, p, \al, \te, a, d_E$. 
If $E \subset B_+$, the dependence on $d_E$ can be removed, being as $d_E \le 2$.
}
\end{rem}

We conclude this section by providing an adaptation of \cite[Theorems 2.4 and 2.7]{MP6} (see also the errata corrige in Section \ref{sec:errata corrige}). We warn the reader that in \cite{MP6} we adopted a different normalization in the definition of the $L^p$-type norms.

%{\color{green}QUI SOTTO FORSE POTREMMO ANCHE METTERCI SOLO CASO $q=\infty$.}

\begin{lem}
	\label{lem:p>N + p<N and p=N} 
	Let $1 \le p < q\le \infty$.
	Let $E \subset \RR^N$ be a bounded domain satisfying the $(\te, a)$-uniform interior cone condition. 
	
	\begin{enumerate}[(i)]
	\item If $p>N$, then there is a non-negative constant $c=c (N, p, \te, a , d_E)$ such that
	\begin{equation*}
	\max\limits_{\ol{ E}} f - \min\limits_{ \ol{ E}} f \le c \, \nr \na f \nr_{p, E} ,
	\end{equation*}
	for any $f\in W^{1,q}(E)$.
	
	\item If $1\le p\le N $ and
$$
\al_{p,q}=\frac{p\, (q-N)}{N\,(q-p)},
$$
then there is a non-negative constant $ c = c (N, p, q, \te, a, d_E)$ such that
	\begin{equation*}
		\max\limits_{\ol{ E}} f - \min\limits_{ \ol{ E}} f \le 
		c \,
		\begin{cases}
		\nr\na f\nr_{p, E}^{\al_{p,q}}\nr\na f\nr_{q,E}^{1-\al_{p,q}} \quad & \text{ if } 1 \le p<N,
		\\
		\nr \na f \nr_{N, E } \log\left( e \, |E|^{\frac{1}{N} - \frac{1}{q}} \frac{  \nr \na f \nr_{q, E} }{\nr \na f \nr_{N, E} }\right) \quad & \text{ if } p=N ,
		\end{cases}
	\end{equation*}
for any $f\in W^{1,q}(E)$.	
\end{enumerate}
\medskip
Explicit bounds for the constants $c$ can be computed.  

If $E \subset B_+$, the dependence of the constants $c$ on $d_E$ can be removed, being as $d_E \le 2$.
\end{lem}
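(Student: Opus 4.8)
The plan is to reduce both parts to a single uniform bound for a Riesz-potential-type integral, and then to obtain that bound by an optimised splitting at one well-chosen radius. Since $E$ satisfies the $(\te,a)$-uniform interior cone condition, Lemma~\ref{lem:cone condition strong implies John} shows that $E$ is a $b$-John domain with $b=b(\te,a,d_E)$. For John domains one has the sub-representation formula underlying Lemma~\ref{lem:John-two-inequalities} (see Bojarski \cite{Bo} and Hurri-Syrj\"anen \cite{HS1,HS}): there is $c_0=c_0(N,b)$ such that
\[
|f(x)-f_E|\le c_0\int_E\frac{|\na f(y)|}{|x-y|^{N-1}}\,dy\qquad\text{for a.e.\ }x\in E,
\]
whenever $\na f\in L^1(E)$. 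Adding a constant to $f$ changes neither side of the asserted inequalities, so we may replace $f$ by $f-f_E$; moreover, in every case treated below $f$ is (H\"older) continuous on $\ol E$ by the Morrey embedding, so, passing to the continuous representative, it suffices to estimate $G(x):=\int_E|x-y|^{1-N}|\na f(y)|\,dy$ uniformly for $x\in E$, because then $\max_{\ol E}f-\min_{\ol E}f\le 2\,c_0\,\sup_{x\in E}G(x)$.

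In part~(i), since $q>p>N$ we have $\na f\in L^q(E)\subset L^p(E)$; H\"older's inequality with exponent $p$ gives $G(x)\le\nr\na f\nr_{p,E}\bigl(\int_{B_{d_E}(x)}|x-y|^{-(N-1)p'}\,dy\bigr)^{1/p'}$, and the last integral is finite and controlled by $c(N,p)\,d_E^{\,N-(N-1)p'}$ precisely because $p>N$ forces $(N-1)p'<N$; this yields the assertion. For part~(ii) the exponent $\al_{p,q}$ is nontrivial exactly when $q>N$ (otherwise $f$ need not be continuous), so we restrict to that range. Assume first $1\le p<N<q$, so $\al_{p,q}\in(0,1)$ and $f\in C^0(\ol E)$. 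Fix $x\in E$, choose $\rho\in(0,d_E]$, and split $G(x)$ over $E\cap B_\rho(x)$ and $E\setminus B_\rho(x)$. On the inner part, H\"older with exponent $q$ and $(N-1)q'<N$ give a bound $c_1\,\rho^{\,1-N/q}\nr\na f\nr_{q,E}$ (a positive power of $\rho$); on the outer part, H\"older with exponent $p$ and $(N-1)p'>N$ give a bound $c_2\,\rho^{\,1-N/p}\nr\na f\nr_{p,E}$ (a negative power of $\rho$). Balancing the two terms, i.e. taking $\rho=\min\{\rho_*,d_E\}$ with $\rho_*=\bigl(\nr\na f\nr_{p,E}/\nr\na f\nr_{q,E}\bigr)^{pq/(N(q-p))}$, and computing $\rho_*^{\,1-N/q}=\bigl(\nr\na f\nr_{p,E}/\nr\na f\nr_{q,E}\bigr)^{\al_{p,q}}$, we obtain $G(x)\le c\,\nr\na f\nr_{p,E}^{\al_{p,q}}\nr\na f\nr_{q,E}^{1-\al_{p,q}}$; the degenerate case $\rho_*>d_E$ is dispatched by taking $\rho=d_E$ and invoking $\nr\na f\nr_{p,E}\le|E|^{1/p-1/q}\nr\na f\nr_{q,E}$ to see that the same bound persists. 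The same argument works for $q=\infty$ (H\"older against $L^1$, $\al_{p,\infty}=p/N$).

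The borderline case $p=N<q$ follows along identical lines, the only change being that on the outer part the radial exponent $N-1-(N-1)p'$ now equals $-1$, so $\int_\rho^{d_E}r^{-1}\,dr=\log(d_E/\rho)$ and the outer bound becomes $c_2\,(\log(d_E/\rho))^{1-1/N}\nr\na f\nr_{N,E}$. Optimising $\rho$ so that the inner term is $\simeq\nr\na f\nr_{N,E}$ (again truncating at $d_E$), using $|E|\le|B|\,d_E^{\,N}$, and applying $(\log t)^{1-1/N}\le\log t$ for $t\ge1$ with $t=|E|^{1/N-1/q}\nr\na f\nr_{q,E}/\nr\na f\nr_{N,E}\ge1$ (the latter by H\"older), one reaches the stated logarithmic estimate. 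All constants produced depend only on $N,p,q,\te,a$ and $d_E$, through $c_0=c_0(N,b)$ and $b=b(\te,a,d_E)$; when $E\subset B^+$ one drops the $d_E$-dependence since $d_E\le2$. The only point requiring genuine care is the interplay between the truncation $\rho\le d_E$ and the balancing radius $\rho_*$ in the sub-critical and critical cases; alternatively, the lemma can be deduced directly from \cite[Theorems~2.4 and 2.7]{MP6} once Lemma~\ref{lem:cone condition strong implies John} is invoked, after adjusting for the different normalisation of the $L^p$-norms used there.
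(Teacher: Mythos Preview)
Your argument is essentially correct and is genuinely different from the paper's treatment. The paper does not give a self-contained proof of this lemma at all: it merely declares the statement to be ``an adaptation of \cite[Theorems~2.4 and~2.7]{MP6}'' and supplies, in the appendix (Section~\ref{sec:errata corrige}), a corrected proof of \cite[Corollary~2.3]{MP6}. That corrected approach is structurally different from yours: it bounds $|f(x)-f_{\cC_x}|$ by a Sobolev--Poincar\'e inequality on the cone $\cC_x$ at $x$, then controls $|f_{\cC_x}-f_E|$ via the global $(p,p)$-Poincar\'e constant $C_p(E)$, and finally invokes Lemma~\ref{lem:cone condition strong implies John} and Remark~\ref{rem:on the proof of John and Poincare} to bound $C_p(E)$ in terms of $\te,a,d_E$. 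Your route instead exploits the pointwise Riesz-potential sub-representation for John domains and obtains all three regimes ($p>N$, $p<N$, $p=N$) by a single H\"older-split-and-balance argument, which is more transparent and yields the interpolation exponent $\al_{p,q}$ by a direct computation. What the paper's route buys is that it avoids committing to the pointwise sub-representation and stays at the level of integral Poincar\'e inequalities; what yours buys is a fully self-contained derivation that does not rely on unpacking \cite{MP6}.

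Two small points deserve tightening. First, the pointwise bound $|f(x)-f_E|\le c_0\,I_1(|\nabla f|\chi_E)(x)$ for John domains is correct, but attributing it to \cite{Bo,HS1,HS} is a bit loose: those references concern the integral Poincar\'e inequalities; the pointwise Riesz-potential representation for John domains is usually credited to Martio/Reshetnyak. Since your domain actually satisfies the strong cone condition, you could alternatively cite the classical Sobolev integral representation on cones together with the cone-to-global comparison in Section~\ref{sec:errata corrige}. Second, in the borderline case $p=N$ you write ``$(\log t)^{1-1/N}\le\log t$ for $t\ge1$'', which is false for $1\le t<e$. What you need (and what explains the factor $e$ inside the logarithm in the statement) is the harmless replacement $(\log t)^{1-1/N}\le\max\{1,\log t\}\le 1+\log t=\log(et)$ for $t\ge1$; with this correction your argument goes through unchanged.
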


\begin{rem}\label{rem:0 sub-harmonic version}
{\rm
For sub-harmonic functions, a similar estimate in the case where $1 \le p <N$ and $q=\infty$ can also be obtained by putting together \cite[Lemma 3.14]{Pog2} with the Hardy-Poincar\`e-type inequalities mentioned in Lemma \ref{lem:John-two-inequalities}. See also \cite[Theorems 3.1 and 3.2]{MP4} for adaptations to either domains satisfying a weaker cone-type condition or John-type domains. 
%%%%%%%%%%%%%%%%%%%
%More details can be found in Remark \ref{rem:comparison subarmoniche funzioni generiche interpolazione}.
%%%%%%%%%%%%%%%
}
\end{rem}
%%%%%%%%%%%%%%%%%%%%%%%%%%%%%%%%%%%%%%%%%%%%%%%%%%%%%%%%%%%%

\section{Quantitative stability results}
\label{sec:stability}

In this section, we shall give the proofs of Theorems \ref{thm:Improvement1}--\ref{thm:Improvement3} and of the more general Theorem \ref{thm:stability-general} below. We begin by recalling some notations and other facts from the Introduction.

\subsection{Some geometrical facts}\label{subsec:some geometrical facts}

Lemma \ref{lem:relationdist} below is an adaptation of \cite[Lemma~3.1]{MP2} to the case of the mixed problem \eqref{eq:problem}, which takes inspiration from \cite[Section~4.1]{Pog3}.
We also mention that a fractional version of \cite[Lemma 3.1]{MP2} has been recently used in \cite{CDPPV}.

%%%%%%%%%%%%%%%
%%%%%%%%%%%%%%%NEW%%%%%%%%%%%%%%%%%%%%%
%%%%%%%%%%%%

We will make use of the following maximum principle for mixed boundary value problems in $B_+$, which is a reformulation of \cite[Proposition 2.3]{GX}.

\begin{lem}\label{lem:comparison in ball}
	Let $f \in C^2(\Om) \cap C^1(\ol{\Om}\setminus \La )$ satisfy
	\begin{equation*}
		\De f \ge 0 \  \text{ in } \ \Om, \quad
		f \le 0    \ \text{ on } \ \ol{\Si}, \quad
		f_\nu \le f \ \text{ on } \ T,
	\end{equation*}
	and assume that $f \in W^{1,\infty} (\Om) \cap W^{2,2}(\Om)$. Then, we have that $f \le 0 $ on $\ol{\Om}$.
	%%%%%%%%%%%%%%%%%%%%%%%%%%%%%
	%Then, either $v\equiv 0$ in $\Om$ or $v < 0$ in $\Om \cup T$.
	%%%%%%%%%%%%%%%%%%%%%%%
\end{lem}

\begin{proof}
	Set $f_+$ to be the positive part of $f$. By using the boundary conditions on $f$, an integration by parts, and the inequality $\De f \ge 0$ in $\Om$, we find that
	\begin{equation*}
		\int_{\Om} |\na f_+|^2 \, dx - \int_{T} f_+^2 \, dS_x \le
		\int_{\Om} |\na f_+|^2 \, dx - \int_{T \cup \Si} f_+ f_\nu \, dS_x = - \int_{\Om} f_+ \De f \, dx \le 0 .
	\end{equation*}
	On the other hand, \cite[(2.5)]{GX} informs us that
	$$
	0\ge\int_{\Om} |\na f_+|^2 \, dx - \int_{T} f_+^2 \, dS_x \ge \la_1 \int_{\Om} f_+^2 \, dx \ge 0,
	$$
	where $\la_1$ is the first Robin-Dirichlet eigenvalue.
	Hence, we easily infer that $f_+ \equiv 0$ in $\ol{\Om}$.
\end{proof}

In the spirit of \cite[Section 4.1]{Pog3}, we now introduce some appropriate sphere conditions peculiar to $B_+$.
In fact, we say that $\Om$ satisfies the \textit{$r_i$-uniform interior sphere condition relative to $B_+$}, if for each $x \in \ol{\Si}$ there exists a touching ball $B_{r_i}(x_0)$
of radius $r_i$ such that: (i) its center $x_0$ satisfies $|x_0|^2 \le 1 + r_i^2$
and (ii) its closure intersects
%
%$\ol{\Si}$ only at $x$ and does not intersect 
%
$\ol{B}_+ \setminus \Om$ only at $x$.

Notice that the requirements of this definition are related to how $\ol{\Si}$ and $\ol{T}$ intersect. In fact, a necessary condition for the validity of (i) and (ii) is that $\langle \nu_{\Si} (x) , \nu_{\pa B}(x) \rangle \ge 0$ for $x \in \pa \Si$.

Since in our setting $\Si$ is smooth, we must have that 
$x_0=x- r_i \, \nu(x)$ for $x \in \Si$.  
However, notice that this may not be the only possibility for the points on $\pa \Si$.

We say that $\Om$ satisfies the {\it strong $r_i$-uniform interior sphere condition relative to $B_+$} if, besides satisfying the $r_i$-uniform interior sphere condition relative to $B_+$, $\Om$ has the property that,
for any $x \in \ol{\Om}$ such that its closest point $\ul{x}$ to $\ol{\Si}$ belongs to $\pa \Si$,
the ball with radius $r_i$ and centered at the point $\ul{x} + r_i \frac{x-\ul{x}}{|x- \ul{x}|}$
is a touching ball at $\ul{x}$ relative to $B_+$ (as in the previous definition).
We notice that this condition is surely satisfied if $\ol{\Si}$ and $\ol{T}$ intersect orthogonally. 
\par
Here and in the sequel, $\de_A(x)$ will denote the distance of a point $x\in\RR^N$ to a set $A$.

\begin{lem}[A geometric bound]
	\label{lem:relationdist}
	Let $u$ be the solution of \eqref{eq:problem}.
	Then
	\begin{equation}
		\label{eq:reldistgenerale}
		-u(x)\ge\frac12\,\de_{\Si}(x)^2 \ \mbox{ for every } \ x\in\ol{\Om}.
	\end{equation}
	Moreover, if $\Om$ satisfies the strong $r_i$-uniform interior sphere condition relative to $B_+$, then it holds that
	\begin{equation}
		\label{eq:relationdist}
		-u(x) \ge \frac{r_i}{2}\,\de_{\Si} (x)  \ \mbox{ for every } \ x\in\ol{\Om}.
	\end{equation}
\end{lem}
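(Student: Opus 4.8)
The plan is to adapt verbatim the comparison argument of \cite[Lemma 3.1]{MP2}; the only genuinely new point is that the boundary now splits into the Dirichlet part $\Si$ and the Steklov part $T$, so we must make sure that $u\le 0$ on \emph{all} of $\ol{\Om}$. This is immediate: by \cite[Proposition 2.3]{GX} we have $u<0$ in $\Om$, and $u\in C^{0,1}(\ol\Om)$ (as recalled in the Introduction), so $u\le 0$ on $\ol\Om$. Moreover, by interior elliptic regularity $u\in C^\infty(\Om)$ and $\De u=N$ holds classically in $\Om$. Since both sides of \eqref{eq:reldistgenerale} and \eqref{eq:relationdist} are continuous on $\ol\Om$ and vanish on $\Si$, and $\de_\Ga\equiv 0$ on $\Ga$ while $-u\ge 0$ there, it suffices to prove the two inequalities for $x\in\Om$ and then pass to the closure by continuity.

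For \eqref{eq:reldistgenerale}, fix $x\in\Om$ and set $d=\de_\Ga(x)>0$, so that $B_d(x)\subset\Om$ and $\ol{B_d(x)}\subset\ol\Om$. Consider the barrier $w(y)=\tfrac12\bigl(|y-x|^2-d^2\bigr)$, which satisfies $\De w=N$ in $B_d(x)$ and $w=0$ on $\pa B_d(x)$. Then $u-w$ is harmonic in $B_d(x)$, continuous on $\ol{B_d(x)}$, and on $\pa B_d(x)$ it equals $u\le 0$; by the maximum principle $u-w\le 0$ in $B_d(x)$, and evaluating at $x$ gives $u(x)\le w(x)=-d^2/2$, i.e. $-u(x)\ge\tfrac12\,\de_\Ga(x)^2$.

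For \eqref{eq:relationdist}, assume $\Om$ satisfies the uniform interior sphere condition with radius $r_i$ and fix $x\in\Om$. If $\de_\Ga(x)\ge r_i$, then \eqref{eq:reldistgenerale} already gives $-u(x)\ge\tfrac12\,\de_\Ga(x)^2\ge\tfrac{r_i}{2}\,\de_\Ga(x)$. If instead $d:=\de_\Ga(x)<r_i$, pick $p\in\Ga$ with $|x-p|=d$ and let $B_{r_i}(y)\subset\Om$ be an interior ball provided by the uniform interior sphere condition at $p$, with $\ol{B_{r_i}(y)}\cap\Ga=\{p\}$. The key geometric fact is that $x$ lies on the closed segment $[y,p]$, so that $|x-y|=r_i-d$; in the classical smooth setting this is just the statement that $x=p-d\,\nu(p)$ and $y=p-r_i\,\nu(p)$, and here it follows from the fact that the uniform interior sphere condition forces $\Om$ to have reach at least $r_i$, so nearest-point projections onto $\Ga$ are unique in the $r_i$-collar and the interior ball at $p$ may be taken along the inner normal direction (this is exactly the point handled in \cite[Lemma 3.1]{MP2}, and is the only step requiring care). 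Granting this, run the same comparison as above with the barrier $\widetilde w(z)=\tfrac12\bigl(|z-y|^2-r_i^2\bigr)$ on $B_{r_i}(y)$: it satisfies $\De\widetilde w=N$, vanishes on $\pa B_{r_i}(y)$, while $u\le 0$ there, so the maximum principle gives $u(x)\le\widetilde w(x)=\tfrac12\bigl((r_i-d)^2-r_i^2\bigr)=\tfrac{d^2}{2}-r_i d\le-\tfrac{r_i}{2}\,d$, using $d\le r_i$. This proves \eqref{eq:relationdist} for $x\in\Om$, and the general case follows by continuity.
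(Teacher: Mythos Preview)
Your proposal is correct and follows essentially the same comparison argument as the paper: in both cases one slides a quadratic torsion barrier on an interior ball under $u$ via the maximum principle (using $u\le 0$ on $\ol\Om$ from \cite[Proposition 2.3]{GX}), first on $B_{\de_\Ga(x)}(x)$ and then, in the interior-sphere case, on the touching ball at the nearest boundary point. The only cosmetic difference is that you justify the collinearity $|x-y|=r_i-d$ via the ``reach $\ge r_i$'' viewpoint, whereas the paper simply remarks that $\Om$ is $C^1$ under the interior-sphere hypothesis and uses that the nearest point $z$ and the center of the touching ball lie on the same inner normal, whence $r_i-|x|=\de_\Ga(x)$; both routes yield the same conclusion.
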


\begin{proof}
	From Lemma \ref{lem:comparison in ball}, we know that $u \le 0$ in $\ol{\Om}$. 
	\par
	Fix $x \in \ol{\Om} \setminus \ol{\Si}$, let $r=\de_{\Si} (x)$, and consider the ball $B_r(x)$ with radius $r$ centered at $x$. 
	It is easy to check that the function defined by $w(y)=(|y-x|^2-r^2)/2$ for $y\in\ol{B}_r(x)$ satisfies
	\begin{equation*}
		\De w = N  \ \text{  in } \ B_r(x), \quad w =0 \ \text{  on } \ \pa B_r(x) ,
	\end{equation*}
	and
	$$
	w_\nu \ge w  \ \text{ on } T \cap B_r (x).
	$$
	The last inequality follows from the direct computations
	$$w= \frac{1+|x|^2 - r^2}{2} - \langle x , y \rangle
	\quad \text{ and } \quad 
	w_\nu = 1 - \langle x , y \rangle , \text{ for } y\in T \cap B_r (x)\subset \pa B ,
	$$ 
	and
	the trivial inequality $r^2 \ge 0 \ge |x|^2 -1$, which holds true for any $x \in \ol{B} \supset \ol{ \Om }$.
	
	If we choose $f= u- w$ and $\Om= B_r(x) \cap B_+$ in Lemma \ref{lem:comparison in ball}, we then have that $w \ge u$ in $\ol{B_r(x) \cap B_+}$ and hence, in particular, $-r^2/2 = w(x) \ge u(x)$. Thus, \eqref{eq:reldistgenerale} is proved.
	\par
	Next, assume that $\Om$ satisfies the strong $r_i$-uniform interior sphere condition relative to $B_+$. 
	If $\de_{\Si}(x) \ge r_i$, \eqref{eq:relationdist} immediately follows from \eqref{eq:reldistgenerale}. If $\de_{\Si } (x) < r_i$, instead, let $\ul{x}$ be the closest point in $\ol{\Si}$ to $x$ and call $B_{r_i}$ the relevant touching ball at $\ul{x} \in \ol{\Ga}_0$ (with radius $r_i$ and centered at the point $\ul{x} + r_i \frac{x- \ul{x}}{|x- \ul{x} |}$ that we denote by $x_0$), which contains $x$.
	%%%%%%%%%%
	%The existence of this ball is guaranteed by our definitions.
	%%%%%%%%%%%%%%
	Setting $w(y)=\left(|y - x_0|^2 - r_i^2 \right)/2$ and using that the center $x_0$ of the touching ball satisfies $|x_0|^2 \le 1 + r_i^2$, we infer that $w_\nu \ge  w$ on $T \cap B_{r_i}$. Hence, an application of Lemma \ref{lem:comparison in ball} with $f=u-w$ and $\Om= B_{r_i} \cap B_+$ gives that $w \ge u$ in $\ol{B_{r_i} \cap B_+}$. 
	As a consequence, being as $x \in B_{r_i} \cap \ol{B_+}$, we obtain that
	$$
	-u(x) \ge \frac{ r_i^2 - |x - x_0|^2 }{2}=
	\frac{( r_i + |x- x_0| )( r_i - |x- x_0| )}{2} \ge \frac{r_i}{2}\,(r_i -|x- x_0|) .
	$$
	This gives \eqref{eq:relationdist}, since $r_i - |x - x_0| = \de_{\Si } (x)$.
\end{proof}

\begin{rem}
	{\rm (i) Notice that the $r_i$-uniform interior sphere condition relative to $B_+$ guarantees the validity of the Hopf lemma for $u_\nu$ on $\Si$. The additional ``strong'' assumption is needed to obtain the Lipschitz growth of $u$ from $\Si$, i.e., \eqref{eq:relationdist}.
		
		(ii) In the classical setting (where $B_+$ is replaced by $\RR^N$), the improved growth in \eqref{eq:relationdist} remains true in the more general case in which $\Om$ satisfies an interior pseudoball condition (see \cite[Step 2 in the proof of Theorem I]{CPY} and \cite[Theorem~4.4]{ABMMZ}). In this regard, one may introduce a notion of pseudoball condition relative to $B_+$.
	}
\end{rem}

%%%%%%%%%%
%%%%%%%%%%%%%END NEW%%%%%%%%%%%%%%%%
%%%%%%%%

Let $u$ be the solution of \eqref{eq:problem}. We consider the harmonic function 
\begin{equation}\label{eq:def h^z}
h (x) = Q(x) - u(x), \ \ x \in \Om,
\end{equation}
where
\begin{equation}
\label{quadratic}
Q (x)=\frac12\, |x-z|^2.
\end{equation}
and $z\in\RR^N$ is some point to be chosen. 
As the next lemma shows, $h$ has to do with the numbers
\begin{equation}
\label{eq:rhoe e rhoi}
\rho_e= \max_{x \in \ol{\Si} }{|x-z|} \quad \rho_i=\min_{x \in \ol{\Si} }{|x-z|}.
\end{equation}
Note that we have:
$$
\ol{\Si} \subseteq \left( \ol{B}_{\rho_e}(z) \setminus B_{\rho_i}(z) \right) \cap \ol{B} .
$$

\begin{lem}
\label{lem:relation_osc_rhoe-rhoi e Hess h}
Fix $z\in\RR^N$. Then, we have that
\begin{equation*}
%\label{eq:hessiana h}
|\na^2 h|^2 = | \na^2 u|^2- \frac{(\De u)^2}{N} \ \text{ in } \ \Om .
\end{equation*}	
Moreover, it holds that 
\begin{equation}
\label{eq:rhoe-rhoi stima cone}
	\rho_e - \rho_i \le	\frac{8}{d_\Om} \left( \max_{ \ol{\Sigma} } h - \min_{ \ol{\Sigma}} h \right).
\end{equation}
\end{lem}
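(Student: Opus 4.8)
The first identity is a pointwise algebraic fact: since $\De u = N$ and $Q$ is a quadratic polynomial with $\na^2 Q = I$, we have $\na^2 h = \na^2 Q - \na^2 u = I - \na^2 u$, so $|\na^2 h|^2 = |I|^2 - 2\lan I, \na^2 u\ran + |\na^2 u|^2 = N - 2\De u + |\na^2 u|^2 = |\na^2 u|^2 - N = |\na^2 u|^2 - (\De u)^2/N$. No obstacle here; it is one line of linear algebra.

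For the inequality \eqref{eq:rhoe-rhoi stima cone}, the key observation is that $h = Q$ on $\Si$, because $u = 0$ there; hence $\max_{\ol\Si} h - \min_{\ol\Si} h = \max_{\ol\Si} Q - \min_{\ol\Si} Q = \tfrac12(\rho_e^2 - \rho_i^2)$, by the definitions \eqref{eq:rhoe e rhoi} of $\rho_e, \rho_i$ and \eqref{quadratic} of $Q$. So the statement reduces to showing $\rho_e - \rho_i \le \tfrac{4}{d_\Om}(\rho_e^2 - \rho_i^2) = \tfrac{4}{d_\Om}(\rho_e + \rho_i)(\rho_e - \rho_i)$, i.e. to showing $\rho_e + \rho_i \ge d_\Om/4$. (If $\rho_e = \rho_i$ there is nothing to prove.)

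To bound $\rho_e + \rho_i$ from below, I would use that $\ol\Si$ has diameter comparable to $d_\Om$. Indeed, $\ol\Om$ lies in the closed region between the spheres $\pa B_{\rho_i}(z)$ and $\pa B_{\rho_e}(z)$ only as far as $\ol\Si$ is concerned; more usefully, $\ol\Si$ is contained in the annulus $\ol B_{\rho_e}(z) \setminus B_{\rho_i}(z)$, so any two points of $\ol\Si$ are within distance $2\rho_e$ of each other, giving $\mathrm{diam}(\ol\Si) \le 2\rho_e \le 2(\rho_e + \rho_i)$. It therefore suffices to show $\mathrm{diam}(\ol\Si) \ge d_\Om/2$. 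Since $\Ga = \ol\Si \cup T$ with $T \subset S = \pa B$ and $\ol\Si \cap T = \La \ne \varnothing$, and $\ol\Om$ is connected with $\mathrm{diam}(\ol\Om) = d_\Om$, one shows that $\ol\Si$ cannot be too small: if both $\ol\Si$ and the "cap bounded off by $T$" were small, $\Om$ itself would be small. Concretely, pick $x_1, x_2 \in \ol\Om$ realizing $d_\Om$; the segment (or a path) joining them exits $\Om$ through $\Ga$, and a diameter-counting argument using that $T$ is a subset of the unit sphere forces $\mathrm{diam}(\ol\Si) \ge d_\Om/2$ (the constant is not sharp and some slack is built into the $8$). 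This geometric step — quantifying that $\Si$ captures a definite fraction of the diameter of $\Om$ — is the only real content and the main point to get right; everything else is bookkeeping.
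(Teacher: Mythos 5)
Your proof of the algebraic identity $|\na^2 h|^2 = |\na^2 u|^2 - (\De u)^2/N$ is correct and identical to the paper's. Your reduction of \eqref{eq:rhoe-rhoi stima cone} to the inequality $\rho_e + \rho_i \ge d_\Om/4$, and further (via $d_\Si \le 2\rho_e$) to $d_\Si \ge d_\Om / 2$, is also correct and follows the same chain as the paper.

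However, the claim $d_\Si \ge d_\Om/2$ is exactly where your argument stops being a proof. You write that ``a diameter-counting argument using that $T$ is a subset of the unit sphere forces $\mathrm{diam}(\ol\Si) \ge d_\Om/2$,'' but that is the assertion to be proved, not a proof of it, and as stated the reason you cite is insufficient: ``$T\subset S$'' alone does not control $d_T$ by $d_\Si$. Your heuristic about a segment from $x_1$ to $x_2$ ``exiting $\Om$ through $\Ga$'' is also a detour --- the two points of $\ol\Om$ realizing $d_\Om$ are automatically on $\Ga = \ol\Si\cup\ol T$, so no path argument is needed. What \emph{is} needed is a case analysis on where those two extremal points lie: if both lie in $\ol\Si$ then $d_\Om\le d_\Si$ trivially; if both lie in $\ol T$ one uses that $T$ is a spherical cap contained in a half-sphere, so that the two farthest points of $\ol T$ must lie on its rim $\La=\ol\Si\cap\ol T$, giving $d_\Om\le d_T\le d_\La\le d_\Si$; and in the mixed case one passes through a point $y\in\La$ and gets $d_\Om\le d_\Si+d_T\le 2d_\Si$. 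This is the genuine geometric content you flagged but did not supply, and it is precisely where the structural hypothesis on $T$ (a cap in a hemisphere) enters. One further small inaccuracy: you say ``some slack is built into the $8$,'' but in fact the constant $8$ is exactly what the chain $d_\Om\le 2d_\Si\le 4\rho_e$ together with $\max_{\ol\Si}h-\min_{\ol\Si}h=\tfrac12(\rho_e+\rho_i)(\rho_e-\rho_i)\ge\tfrac12\rho_e(\rho_e-\rho_i)$ produces; there is no slack to absorb a weaker bound on $d_\Si$.
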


\begin{proof}
The identity easily follows by direct computation. 
\par
Next, let $x_1, x_2\in\ol{\Om}$ be such that $d_\Om=|x_1-x_2|$. It is clear that $x_1, x_2\in \ol{\Si} \cup T$. If $x_1, x_2\in \ol{\Si}$, then $d_\Om\le d_\Si$. If  $x_1, x_2\in \ol{T}$, then $d_\Om\le d_T\le d_{\ol{\Si} \cap \ol{T} }\le d_\Si$. In fact, the second inequality follows from the fact that $T$ is a spherical cap contained in a half sphere. If $x_1\in \ol{\Si}$ and $x_2\in T$ (or the other way around), we take $y\in \La = \ol{\Si}\cap \ol{T}$ and infer that
$$
d_\Om=|x_1-x_2|\le |x_1-y|+|y-x_2|\le d_\Si+d_T\le 2\,d_\Si.
$$
Thus, in any case we have that $d_\Om\le 2\,d_\Si$.
Now, if $x_1, x_2\in \ol{\Si}$ are such that $d_\Si=|x_1-x_2|$, we easily see that $d_\Si=|x_1-x_2|\le |x_1-z|+|z-x_2|\le 2\,\rho_e$, so that $d_\Om\le 4\,\rho_e$.
Using the last inequality together with
$$
\max_{ \ol{\Sigma}} h - \min_{ \ol{\Sigma} } h = \frac{1}{2}(\rho^2_e - \rho^2_i)=
\frac12\,(\rho_e+\rho_i)(\rho_e-\rho_i)\ge \frac12\,\rho_e\,(\rho_e-\rho_i) ,
$$
\eqref{eq:rhoe-rhoi stima cone} easily follows.
%The inequality \eqref{eq:rhoe-rhoi stima cone} ensues 
%since $\Om$ contains at least a cone $\cC$ of height $a$ and width $\te$, and hence
%$$
%\rho^z_e+\rho^z_i\ge\rho^z_e \ge \left( \frac{|\Om|}{|B|} \right)^{1/N} \ge \left( \frac{ |\cC|}{|B|} \right)^{1/N}.
%$$
%We thus infer that
%$$
%\max_{\Sigma} h^z - \min_{\Sigma} h^z = \frac{1}{2} \left[ \left(\rho^z_e \right)^2 - \left(\rho^z_i\right)^2 \right] \ge
%\frac{1}{2} \left( \frac{ |\cC|}{|B|} \right)^{1/N}  ( \rho^z_e - \rho^z_i) ,
%$$
%which gives \eqref{eq:rhoe-rhoi stima cone}.
\end{proof}

\subsection{Special stability estimates}
\label{subsec:special-estimates}

In this section, we shall give the proof of Theorems \ref{thm:Improvement1}--\ref{thm:Improvement3}. To this aim, we must work on the fundamental identity \eqref{fundamental-identity}. We shall see that its right-hand side can be easily estimated in terms of the  deviation of $\nr u^2_\nu-R^2\nr_{1,\Si}$. 
Thanks to Lemma \ref{lem:relationdist}, the left-hand side of \eqref{fundamental-identity}, instead, can be bounded from below by the following weighted $L^2$-norm:
\begin{equation}\label{eq:weightdnorm_olal}
\nr \de_{\Si}^{ \tau } \na^2 h \nr_{2, \Om}.
\end{equation}
The appropriate exponent $\tau$ will be chosen as $\tau= 1$ in Theorems \ref{thm:Improvement1} and \ref{thm:Improvement2}, $\tau= 1/2$ in Theorem \ref{thm:Improvement3}, and $\tau= 3/2$ in Theorem \ref{thm:stability-general} below. 
The final stability estimates will then result from a bound of $\rho_e -\rho_i$ in terms of those relevant  weighted norms. This task will be achieved by means of Lemma \ref{lem:p>N + p<N and p=N}.
\par
Thus, we begin with the following lemma.

\begin{lem}[Weighted bounds for the Hessian matrix of $h$]
\label{lem:weighted-bounds}
Take $N\ge 2$. Let $\Om\subset \RR^N$ be a subdomain of $B_+ $ and define the number
\begin{equation}
m=\min\{ x_N: x\in\ol{\Om}\}.
\label{def-m}
\end{equation}
Let $u$ be the solution of \eqref{eq:problem} with Lipschitz constant $L$ be as in \eqref{eq:seminorm}. 
\par
For any choice of $z \in \RR^N$, let $h$ be the function defined in \eqref{eq:def h^z}.
%Consider the following set of possible assumptions.
%\begin{equation}\label{hp:cone condition}
%\boxed{
%\Om \text{ satisfies the } (\te, a)\text{-uniform interior cone condition}. 
%}	
%\end{equation}
%\begin{equation}\label{hp: lower bound x_N}
%	\boxed{
%	\begin{split}
%	\text{There is a positive lower bound }& \ul{x}_N>0 \text{ for } x_N \text{ in } \Om \text{ such that:} 
%	\\
%	x_N \ge \ul{x}_N>0 \quad  \text{ for any } & \, x=(x_1, \dots, x_N) \in \ol{\Om}.
%	\end{split}
%	}
%\end{equation}
%\begin{equation}\label{hp:sphere condition}
%	\boxed{ 
%	\Om	\text{ satisfies the uniform interior sphere condition with radius } r_i.
%}
%\end{equation}  
The following statements hold true.

(i) If $\Om$ satisfies the $(\te, a)$-uniform interior cone condition, then we have that
 \begin{equation*}
%\label{eq:RHS bound general assumptions}
 	\nr \de_{\Si}^{3/2} \na^2 h \nr_{2,\Om}^2 \le  (L+ 2) \,  \nr  u_\nu^2 - R^2 \nr_{1,\Si} 
\end{equation*}
and, if the number $m$ in \eqref{def-m} is positive,
\begin{equation*}
%\label{eq:RHS con lower bound x_N}
	\nr \de_{\Si} \na^2 h \nr_{2,\Om}^2 \le  \frac{L+2}{ m }\,  \nr  u_\nu^2 - R^2 \nr_{1,\Si}.
\end{equation*}

(ii) If $\Om$ satisfies the strong $r_i$-uniform interior sphere condition relative to $B_+$, then we have that
\begin{equation*}
%\label{eq:RHS con assumption sphere condition}
	\nr \de_{\Si} \na^2 h \nr_{2,\Om}^2 \le \frac{L+2}{r_i}\, \nr  u_\nu^2 - R^2 \nr_{1,\Si}
\end{equation*}
and, if the number $m$ in \eqref{def-m} is positive,
\begin{equation*}
%\label{eq:RHS sphere e lower bound x_N}
	\nr \de_{\Si}^{\frac{1}{2}} \na^2 h \nr_{2,\Om}^2 \le \frac{L+2}{ m\,r_i} \nr  u_\nu^2 - R^2 \nr_{1,\Si}.
\end{equation*}
%
%%%%%%%%%%%%%%%%%%
%(iv) If $\Om$ satisfies \eqref{hp: lower bound x_N} and \eqref{hp:sphere condition}, then we have that
%\begin{equation}\label{eq:RHS sphere e lower bound x_N}
%	\nr \de_{\Ga}^{\frac{1}{2}} \na^2 h \nr_{2,\Om}^2 \le \frac{M+2}{ \ul{x}_N r_i} 
%\nr  u_\nu^2 - R^2 \nr_{1,\Si}.
%\end{equation}
%%%%%%%%%%%%%%
\end{lem}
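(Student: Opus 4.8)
The strategy is to plug the two pointwise lower bounds on $-u$ from Lemma \ref{lem:relationdist} into the left-hand side of the fundamental identity \eqref{fundamental-identity}, and to estimate the surface integral on the right-hand side crudely in terms of $\nr u_\nu^2-R^2\nr_{1,\Si}$. First I would recall from Lemma \ref{lem:relation_osc_rhoe-rhoi e Hess h} that $|\na^2 h|^2=|\na^2 u|^2-(\De u)^2/N$ in $\Om$, so that the left-hand side of \eqref{fundamental-identity} is exactly $\int_\Om x_N\,(-u)\,|\na^2 h|^2\,dx$. Since $\Om\subset B^+$ we have $x_N\ge m$ on $\ol\Om$ when $m>0$ (and always $x_N>0$), so using $-u(x)\ge \tfrac12\de_\Ga(x)^2$ from \eqref{eq:reldistgenerale} gives
$$
\int_\Om x_N\,(-u)\,|\na^2 h|^2\,dx \ge \frac{1}{2}\,\nr \de_\Ga^{3/2}\,\na^2 h\nr_{2,\Om}^2,
$$
and, if $m>0$, the stronger $x_N\ge m$ upgrades the weight to $\de_\Ga^2$ under the integral, yielding $\tfrac{m}{2}\,\nr\de_\Ga\,\na^2 h\nr_{2,\Om}^2$ as a lower bound. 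For part (ii), the interior sphere condition gives instead $-u(x)\ge \tfrac{r_i}{2}\de_\Ga(x)$ by \eqref{eq:relationdist}, which replaces one power of $\de_\Ga$ by the constant $r_i$: combined with $x_N>0$ it gives $\tfrac{r_i}{2}\,\nr\de_\Ga^{1/2}\cdot\de_\Ga^{1/2}\,\na^2 h\nr_{2,\Om}^2$, i.e. $\tfrac{r_i}{2}\,\nr\de_\Ga\,\na^2 h\nr_{2,\Om}^2$, wait --- more carefully, $x_N(-u)\ge \tfrac{r_i}{2}x_N\,\de_\Ga\ge \tfrac{r_i}{2}\,\de_\Ga$ only needs $x_N\ge 1$, which is false; so one writes $x_N(-u)\ge \tfrac{r_i}{2}\de_\Ga$ using $x_N\ge$ nothing --- instead one keeps $x_N>0$ and uses $-u\ge\tfrac{r_i}{2}\de_\Ga$ to get a weight $\de_\Ga$, and if additionally $m>0$ one gets weight $m\,\de_\Ga$, hence $\de_\Ga^{1/2}$ squared times $m\,r_i$. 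The bookkeeping of which factor ($x_N$ versus the $-u$ bound) supplies which power of $\de_\Ga$ is the only subtlety, and it is routine.

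Next I would bound the right-hand side of \eqref{fundamental-identity}. On $\Si$ we have $u=0$ and $|\na u|=|u_\nu|$, and by the definition \eqref{Killing} of $X^q$ together with $\de_\Ga=0$ on $\Ga$ the bracket $[\,u_\nu x_N-\lan X^q,\nu\ran\,]$ is a smooth bounded quantity on $\ol\Si\subset\ol{B}_+$; in fact $|x_N|\le 1$, $|u_\nu|\le L$, and $|\lan X^q,\nu\ran|\le |X^q|\le |x_N||x|+1\le 2$ on $\ol B_+$, so the bracket is bounded in absolute value by $L\cdot 1+2=L+2$. Therefore
$$
\frac{1}{2}\left|\int_\Si (u_\nu^2-R^2)\,[\,u_\nu x_N-\lan X^q,\nu\ran\,]\,dS_x\right|\le \frac{L+2}{2}\,\nr u_\nu^2-R^2\nr_{1,\Si}.
$$
Combining this upper bound with each of the four lower bounds obtained above, and cancelling the common factor $\tfrac12$, gives precisely the four asserted inequalities, with constants $L+2$, $(L+2)/m$, $(L+2)/r_i$, and $(L+2)/(m r_i)$ respectively.

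I do not expect any real obstacle here: the proof is a direct assembly of the fundamental identity \eqref{fundamental-identity}, the two geometric lower bounds from Lemma \ref{lem:relationdist}, and the elementary pointwise bound $|[\,u_\nu x_N-\lan X^q,\nu\ran\,]|\le L+2$ on $\ol B_+$. The one point that requires a moment's care is to make sure that in each of the four cases the correct total power of $\de_\Ga$ is extracted --- in the $m>0$ cases one power of $x_N$ is spent on the constant $m$ and thus does \emph{not} help lower the $\de_\Ga$-exponent, while in the sphere-condition cases one power of $\de_\Ga$ is traded for the constant $r_i$ via \eqref{eq:relationdist}; when both hypotheses hold, both trades occur and the weight drops to $\de_\Ga^{1/2}$. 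Everything else is a one-line estimate.
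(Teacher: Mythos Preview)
Your overall strategy is correct and matches the paper's proof: combine the fundamental identity \eqref{fundamental-identity}, the identity $|\na^2 h|^2=|\na^2 u|^2-(\De u)^2/N$, the pointwise bounds on $-u$ from Lemma \ref{lem:relationdist}, and the elementary estimate $|u_\nu x_N-\lan X^q,\nu\ran|\le L+2$ on $\Si$. The right-hand side bound is fine.

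However, there is a genuine gap in the left-hand side lower bounds, and your own confusion in part (ii) locates it precisely. From $-u\ge\tfrac12\,\de_\Ga^2$ and merely $x_N>0$ you \emph{cannot} deduce
\[
\int_\Om x_N(-u)\,|\na^2 h|^2\,dx\ \ge\ \frac12\,\nr\de_\Ga^{3/2}\na^2 h\nr_{2,\Om}^2,
\]
since that would require $x_N\,\de_\Ga^2\ge\de_\Ga^3$, i.e.\ $x_N\ge\de_\Ga$. Likewise, in part (ii) you noticed that $x_N(-u)\ge\tfrac{r_i}{2}\,x_N\,\de_\Ga$ does not give $\tfrac{r_i}{2}\,\de_\Ga^2$ from $x_N>0$ alone, and you were unable to finish. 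The missing observation, stated explicitly in the paper's proof, is the elementary geometric fact
\[
x_N\ \ge\ \de_\Ga(x)\quad\text{for every }x\in\Om\subset B_+.
\]
This holds because the segment from $x$ to its vertical projection $(x',0)$ has length $x_N$ and must cross $\Ga$ (since $\Om\subset\{x_N>0\}$), so $\de_\Ga(x)\le x_N$. With this in hand, in the first inequality of (i) you get $x_N(-u)\ge\tfrac12\,\de_\Ga\cdot\de_\Ga^2=\tfrac12\,\de_\Ga^3$, and in the first inequality of (ii) you get $x_N(-u)\ge\de_\Ga\cdot\tfrac{r_i}{2}\,\de_\Ga=\tfrac{r_i}{2}\,\de_\Ga^2$, exactly as needed. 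The two ``$m>0$'' inequalities do not need this, since there $x_N\ge m$ already supplies the constant factor; your treatment of those is correct.
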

\begin{proof}
	In view of \eqref{eq:seminorm}, we have that $0<u_\nu \le L $ on $\Si$. Thus, being as $0\le x_N\le 1$ for  $x\in B_+$, we have that
	\begin{equation*}
		|u_\nu x_N - \lan X^q, \nu\ran| \le L+ 2   \quad \text{ on } \, \Si,
	\end{equation*}
	and hence, from \eqref{fundamental-identity} and Lemma \ref{lem:relation_osc_rhoe-rhoi e Hess h}, we get:
	\begin{equation}\label{eq:inequalitygrezzaperstability}
		\int_\Om x_N (-u) |\na^2 h|^2 dx \le \frac{L+2}{2}\, \nr  u_\nu^2 - R^2 \nr_{1,\Si}
	\end{equation}

(i)	Notice that 
	\begin{equation*}
%\label{eq:boundxNcondist}
		x_N \ge \de_{\Si} (x) \quad \text{ for any } x \in \Om \subset B_+ .
	\end{equation*}
By this inequality, \eqref{eq:inequalitygrezzaperstability}, and \eqref{eq:reldistgenerale}, the first desired inequality easily follows.
Also, the second desired inequality easily ensues by putting together \eqref{eq:inequalitygrezzaperstability}, \eqref{eq:reldistgenerale}, and the fact that $m>0$.

(ii) Since $\Om$ satisfies the strong $r_i$-uniform interior sphere condition relative to $B_+$, \eqref{eq:relationdist} holds true.
Thus, we have that
\begin{equation*}
	-x_N\,u(x) \ge \frac{r_i}{2}\, \de_{\Si} (x)^2.
\end{equation*}
This bound, together with \eqref{eq:inequalitygrezzaperstability} leads to the first desired inequality.
Next, by using \eqref{eq:relationdist} and the fact that $m>0$, we deduce that
\begin{equation*}
	-x_N\,u(x) \ge  \frac{m\,r_i}{2}\, \de_{\Si} (x).
\end{equation*}
Inserting this inequality into \eqref{eq:inequalitygrezzaperstability} gives the second desired inequality.
\end{proof}

Notice that, as already mentioned, the proofs of Theorems \ref{thm:Improvement1}--\ref{thm:Improvement3} will only entail the cases in this lemma where $0<\tau \le 1$. 
The desired conclusions will in fact be obtained by adapting to the present setting the arguments developed by the authors in \cite{Pog2, MP3, MP6}. The case where $\tau=3/2$ will instead be used for the proof of the more general result contained in Theorem \ref{thm:stability-general}, which requires a new and careful analysis.

%%%%%%%%%%%%%%%%%%%%%%%%%%%%%%%%%%%%%%%%%%%%%%

%Nevertheless, it should be noticed that the most general result that we present here will require $\tau=3/2$. In order to allow the exponent of $\de_{\Ga}$ to be greater than $1$, a new and careful analysis is needed.

%In what follows, we treat separately the cases $\tau \le 1$ and $\tau \ge 1$, starting from the first one.

The proofs of Theorems \ref{thm:Improvement1}-- \ref{thm:Improvement3} will result from 
Theorem \ref{thm:oldschoolapproachW22_al<1} below. In order to proceed, we recall from the introduction the convenient choice of $z$: 
\begin{equation*}
%\label{eq:choice z old school}
	z = \frac{1}{|\Om|} \left\{ \int_\Om x \, dx - \int_T u(x) \, x \, dx  \right\} .
\end{equation*}
Notice that, with this choice, we have that 
$$
\int_\Om \na h \, dx =\int_\Om (x-z) \, dx- \int_\Om \na u \, dx=  \int_\Om x \, dx - z\, |\Om| -\int_T u(x) \, x \, dx = 0 .
$$
This ensures that Corollary \ref{cor:JohnPoincareaigradienti} can be applied with $v=h$, $E=\Om$.

%By exploiting Corollary \ref{cor:JohnPoincareaigradienti} and Lemma \ref{lem:p>N + p<N and p=N}, we obtain the following.

%
%%%%%%%%%%%%%%%%%%%%%%%%%%%%%%%%%%%%%%%%%%%%%%%%%%%%%%%%%%%%%%
%\bigskip
%{\color{red}
%ERRATA CORRIGE A COROLLARIO 2.3 in \cite{MP6}. Quello doveva essere fatto con Poincar\'e come l'avevo %scritto usando i John nella mia versione finale 20210726. Mannaggia...
%CI VOLEVA PROPRIO POINCAR\`E su $\frac{| \Om | }{ | \cC_x |} |f_\Om| \ge |f_{\cC_x}|$ usando che $|f| - %|f_{\cC_x}| \le |f - f_{\cC_x}|$.
%
%Per applicare Poincare', costante esplicita dipendeva dunque anche da John...che forse si poteva %sostituire con diametro e cono interno forte (cioe' come in questo paper)}
%\bigskip
%%%%%%%%%%%%%%%%%%%%%%%%%%%%%%%%%%%%%%%%%%%%%%%%%%%%%%%%%%%%%%%%%%%%%%%%%%
%
\begin{thm}
\label{thm:oldschoolapproachW22_al<1}
Let $N\ge 2$ and let $\Om \subset B_+$ be a domain satisfying the $(\te, a)$-uniform interior cone condition. 
\par
Let $u$ be solution of \eqref{eq:problem} and consider the function $h=Q-u$, where $Q$ is given by \eqref{quadratic} with $z$ as in \eqref{eq:choice z old school}.
Then, the following statements hold true.
\par
(i) There is some non-negative constant $c= c(N, \tau, \te , a )$ such that
\begin{equation}
\label{eq:N>_W22-oldapproac_al leq 1}
\rho_e-\rho_i\le c\,\begin{cases}
\nr \de_{\Ga}^{\tau} \, \na^2 h  \nr_{2,\Om}  &\mbox{ if } \ 0<\tau<2-\frac{N}{2}, \\
\nr\na h \nr_{ \infty , \Om}^{1-\ka_{N,\tau} } \nr \de_{\Ga}^{\tau} \,   \na^2 h  \nr_{2,\Om}^{\ka_{N,\tau}}  &\mbox{ if } \ 2-\frac{N}{2}<\tau\le 1, 
\end{cases} 
\end{equation}
where 
$$
\ka_{N,\tau}=\frac1{\tau+N/2-1}.
$$
\par
(ii) There is some non-negative constant $c= c(N, \te , a )$ such that
	\begin{equation}\label{eq:N=_W22-oldapproac_al leq 1}
		\rho_e-\rho_i\le c \, \nr \de_{\Ga}^\tau \, \na^2 h  \nr_{2,\Om} \, \max \left\{ \log \left( \frac{ \nr \na h \nr_{\infty , \Om} }{ \nr \de_{\Ga}^\tau \, \na^2 h  \nr_{2,\Om} } \right) , 1 \right\}
\end{equation}
with $\tau=2-N/2$.
\end{thm}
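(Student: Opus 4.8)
\emph{Proof plan.} The plan is to chain three already-established ingredients --- the geometric bound \eqref{eq:rhoe-rhoi stima cone} of Lemma~\ref{lem:relation_osc_rhoe-rhoi e Hess h}, the weighted Hardy--Poincar\'e inequality of Corollary~\ref{cor:JohnPoincareaigradienti}, and the Sobolev--Morrey-type oscillation estimates of Lemma~\ref{lem:p>N + p<N and p=N} --- according to the scheme
\[
\rho_e-\rho_i\ \le\ \tfrac{8}{d_\Om}\bigl(\max_{\ol\Om}h-\min_{\ol\Om}h\bigr)\ \le\ c\,\nr\na h\nr_{r,\Om}\ \ \bigl(\text{or}\ c\,\nr\na h\nr_{r,\Om}^{\ka}\nr\na h\nr_{\infty,\Om}^{1-\ka}\bigr)\ \le\ c\,\nr\de_\Ga^\tau\na^2 h\nr_{2,\Om}\,(\cdots).
\]
For the first passage I use \eqref{eq:rhoe-rhoi stima cone} together with $\ol\Si\subset\ol\Om$, and I note that the $(\te,a)$-uniform interior cone condition forces $d_\Om\ge a$ --- indeed, for $x\in\pa\Om$ the segment $\{x+t\,\om_x:0<t<a\}$ lies in $\Om$ --- so that $8/d_\Om$ is an admissible constant.

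The second and third passages hinge on a single choice of exponent. I set
\[
r_\tau=\frac{2N}{N-2(1-\tau)}\,,\qquad\text{so that}\qquad \frac{r_\tau}{N}=\frac{1}{\tau+N/2-1}=\ka_{N,\tau},
\]
and I observe, by a direct computation, that $r_\tau>N$, $r_\tau=N$, or $r_\tau<N$ according as $\tau<2-\tfrac N2$, $\tau=2-\tfrac N2$, or $\tau>2-\tfrac N2$, while $r_\tau\ge 2$ for every $\tau\le 1$. Since $0<\tau\le1$ and $2(1-\tau)<N$ (for $N=2$ this is where $\tau>0$ enters; for $N\ge3$ it is automatic), the triple $(r_\tau,2,\tau)$ satisfies \eqref{eq:conditionHS}, so Corollary~\ref{cor:JohnPoincareaigradienti}, applied with $f=h$ and $E=\Om$ --- legitimate because $\int_\Om\na h\,dx=0$ by the choice \eqref{eq:choice z old school} of $z$ --- gives $\nr\na h\nr_{r_\tau,\Om}\le c\,\nr\de_\Ga^\tau\na^2 h\nr_{2,\Om}$ with $c=c(N,\tau,\te,a)$.

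It then remains to bound $\max_{\ol\Om}h-\min_{\ol\Om}h$ through Lemma~\ref{lem:p>N + p<N and p=N} applied to $f=h\in W^{1,\infty}(\Om)$ (recall $h=Q-u$ with $Q$ smooth and $u\in W^{1,\infty}(\Om)$). If $\tau<2-\tfrac N2$, then $r_\tau>N$ and part~(i) yields $\max_{\ol\Om}h-\min_{\ol\Om}h\le c\,\nr\na h\nr_{r_\tau,\Om}$; combined with the two displays above this is the first line of \eqref{eq:N>_W22-oldapproac_al leq 1}. If $2-\tfrac N2<\tau\le1$, then $1\le r_\tau<N$ and part~(ii) with $q=\infty$ (so that $\al_{r_\tau,\infty}=r_\tau/N=\ka_{N,\tau}$) yields $\max_{\ol\Om}h-\min_{\ol\Om}h\le c\,\nr\na h\nr_{r_\tau,\Om}^{\ka_{N,\tau}}\nr\na h\nr_{\infty,\Om}^{1-\ka_{N,\tau}}$, which is the second line of \eqref{eq:N>_W22-oldapproac_al leq 1}. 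Finally, in the borderline case $\tau=2-\tfrac N2$ (which forces $N\in\{2,3\}$ for $\tau$ to lie in $(0,1]$) one has $r_\tau=N$, and part~(ii) with $p=N$, $q=\infty$, together with $|\Om|^{1/N}\le|B|^{1/N}$, gives
\[
\max_{\ol\Om}h-\min_{\ol\Om}h\ \le\ c\,\nr\na h\nr_{N,\Om}\,\log\!\Bigl(e\,|B|^{1/N}\,\frac{\nr\na h\nr_{\infty,\Om}}{\nr\na h\nr_{N,\Om}}\Bigr).
\]

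The one genuinely delicate point --- and the step I expect to be the main obstacle --- is converting this last logarithmic inequality into the stated form \eqref{eq:N=_W22-oldapproac_al leq 1}, whose right-hand side involves the \emph{weighted} seminorm $D:=\nr\de_\Ga^\tau\na^2 h\nr_{2,\Om}$ rather than $B:=\nr\na h\nr_{N,\Om}$. Writing also $A:=\nr\na h\nr_{\infty,\Om}$, one has $B\le c_0 D$ from the Hardy--Poincar\'e step and $B\le|B|^{1/N}A$ by H\"older; since $t\mapsto t\,\log(e\,|B|^{1/N}A/t)$ is nonnegative and nondecreasing on $(0,|B|^{1/N}A]$, and is always $\le|B|^{1/N}A$ there, an elementary case analysis --- distinguishing whether $c_0 D\le|B|^{1/N}A$ or not, and whether $A\le eD$ or not --- shows that
\[
B\,\log\!\Bigl(e\,|B|^{1/N}\frac{A}{B}\Bigr)\ \le\ c\,D\,\max\bigl\{\log(A/D),\,1\bigr\},\qquad c=c(N,\te,a),
\]
and inserting this into the chain produces \eqref{eq:N=_W22-oldapproac_al leq 1}. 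All that remains beyond this is the bookkeeping of the constants and the verification, already indicated above, that $r_\tau$ falls in the exact ranges required for Corollary~\ref{cor:JohnPoincareaigradienti} and Lemma~\ref{lem:p>N + p<N and p=N} to be applicable simultaneously.
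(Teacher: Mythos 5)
Your proof is correct and follows essentially the same route as the paper: you chain the geometric bound \eqref{eq:rhoe-rhoi stima cone}, the oscillation estimates of Lemma~\ref{lem:p>N + p<N and p=N} with the intermediate exponent $r_\tau = 2N/(N-2(1-\tau))$ (which is exactly the paper's $N\ka_{N,\tau}$), and Corollary~\ref{cor:JohnPoincareaigradienti} with $(r,p,\al)=(r_\tau,2,\tau)$. Your explicit unpacking of the elementary-but-tedious conversion from the $\log\bigl(e|E|^{1/N}\nr\na h\nr_\infty/\nr\na h\nr_N\bigr)$ form of Lemma~\ref{lem:p>N + p<N and p=N} to the $\max\{\log(\nr\na h\nr_\infty/D),1\}$ form in the statement is a welcome clarification of a step the paper leaves implicit; otherwise the two arguments coincide.
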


\begin{proof}
In both items, we will use at some point the trivial inequality
\begin{equation*}
%\label{eq:trivial oscillation Sigma Omega}
\max_{ \ol{\Sigma}} h - \min_{ \ol{\Sigma}} h \le	\max_{\ol{\Om}} h - \min_{\ol{\Om}} h.
\end{equation*}
	(i) Let $2-N/2<\tau\le 1$.
	By using item (ii) of Lemma \ref{lem:p>N + p<N and p=N} with $E=\Om$, $f=h$, $p = N \ka_{N,\tau}  
	%%%%
	%=2N/ \left[ N-2(1- \tau ) \right] 
	%%%%%%%%%%%%%%%%%%%%
	<N$, 
	and $q=\infty$, we find a constant $c=c(N, \tau, \te , a)$ such that
	\begin{equation*}
		\max_{\ol{\Om}} h - \min_{\ol{\Om}} h \le  c \,
		\nr\na h \nr_{ N \ka_{N,\tau} , \Om}^{\ka_{N,\tau}  } \nr\na h \nr_{ \infty , \Om}^{1-\ka_{N,\tau} } .
	\end{equation*}
By using \eqref{eq:rhoe-rhoi stima cone} and the trivial inequality,
we thus find a constant $c=c(N, \tau, \te , a)$ such that
$$
\rho_e - \rho_i \le c \, \nr\na h \nr_{ N \ka_{N,\tau} , \Om}^{\ka_{N,\tau}} \nr\na h \nr_{ \infty , \Om}^{1-\ka_{N,\tau} } .
$$
We point out that in \eqref{eq:rhoe-rhoi stima cone} $8/d_\Om$ can be replaced by $8/a$, since $\Om$ contains at least a cone of height $a$.

By applying Corollary \ref{cor:JohnPoincareaigradienti} with $E=\Om$, $f= h$, $r=N \ka_{N,\tau}$, $p=2$, $\al=\tau$, the second inequality in (i) easily follows. 
\vskip.1cm
Next, let $\tau<2-N/2$.
By the Sobolev immersion theorem (here, we are indeed applying item (i) of Lemma \ref{lem:p>N + p<N and p=N}  with $E=\Om$, $f=h$ and $p=N \ka_{N,\tau}>N$), we can find a constant $c=c(N, \tau, \te ,a)$ such that
	\begin{equation*}
	\max_{\ol{\Om}} h - \min_{\ol{\Om}} h \le  c \, \nr \na h \nr_{N \ka_{N,\tau} , \Om }.
	\end{equation*}
By again using \eqref{eq:rhoe-rhoi stima cone} and the trivial inequality,  we thus infer that
$$
\rho_e - \rho_i \le c \, \nr \na h \nr_{N \ka_{N,\tau} , \Om },
$$	
by possibly changing the relevant constant.
%%%%
%Since the mean value of $\na h$ is the null vector, we can apply 
%%%%%%%%
Hence, the first inequality in (i) follows by using Corollary \ref{cor:JohnPoincareaigradienti} with $E=\Om$, $f= h$, $r=N \ka_{N,\tau}$, $p=2$, $\al=\tau$.

	(ii)
	Let $\tau=2-N/2$. By using \eqref{eq:rhoe-rhoi stima cone}, the trivial inequality, and item (ii) of Lemma \ref{lem:p>N + p<N and p=N} with $E=\Om$, $f=h$, $p=N = 4 - 2 \tau$ and $q=\infty$, we find a constant $c=c(N, \te , a)$ such that
%
%%%%%%%%%%%%%%%%%%%%%%%%%%%%%%%%%%%%%%%
%	\begin{equation*}
%		\max_{\ol{\Om}} h - \min_{\ol{\Om}} h \le  c \,
%		\nr \na h \nr_{N, \Om } \log\left( \frac{ e \, \nr \na h \nr_{ \infty , \Om} }{\nr \na h \nr_{N, %\Om} }\right) .
%	\end{equation*}
%
%Using \eqref{eq:rhoe-rhoi stima cone} and \eqref{eq:trivial oscillation Sigma Omega} we thus find a %constant $c=c(\tau, \te , \ca )$ such that
%%%%%%%%%%%%%%%%%%%%%%%%%%%%%%%%%%%%%%%%%%%%%%
%
$$
\rho_e - \rho_i \le c \, \nr \na h \nr_{N, \Om } \max \left\lbrace \log\left( \frac{ \nr \na h \nr_{ \infty , \Om} }{\nr \na h \nr_{N, \Om} }\right) , 1 \right\rbrace .
$$
The inequality in (ii) then ensues by applying Corollary \ref{cor:JohnPoincareaigradienti} with $E=\Om$, $f= h$, $r=N$,
$p=2$, and $\al=\tau$.
\end{proof}

\begin{rem}[An explicit bound for $\nr \na h \nr_{\infty, \Om}$]
\label{rem:bound-grad-h}
	{\rm
	With the choice \eqref{eq:choice z old school}, we can easily obtain the following explicit bound:
\begin{equation*}
	\nr \na h \nr_{\infty, \Om} \le 2(L+1).
\end{equation*}
where $L$ is that defined in \eqref{eq:seminorm}.
\par
In fact, we have that
\begin{equation}\label{eq:aggiunta per chiarire dopo na h}
|\na h(x)|\le |\na u(x)|+|x-z|\le L+|x-z|  \quad \text{ for } x\in\ol{\Om} . 
\end{equation}
Moreover, we see that
\begin{multline*}
|x-z|\le\left|x-\frac1{|\Om|}\int_\Om y\,dy+\int_T u(y)\,y\,dS_y\right| \le \\
\frac1{|\Om|}\int_\Om |x-y|\,dy+\frac1{|\Om|}\int_\Om |\na u(y)|\,dy \le
d_\Om+L\le 2+L,
\end{multline*}
for $x\in\ol{\Om}$. In the second inequality, we used that
$$
\int_T u(y)\,y\,dS_y=\int_\Om \na u(y)\,dy,
$$
by the divergence theorem.

%by recalling the definitions of $h$ and $q$, using \eqref{eq:defMgeneral} we can write
%$$
%\nr \na h \nr_{\infty, \Om} \le \nr \na u \nr_{\infty, \Om} + \nr \na q \nr_{\infty, \Om} = M + \max_{\ol{\Om}} |x-z| \le 2(M+1) .
%$$
%	In the last inequality, we used that, being as 
%	$$
%	z= \frac{1}{|\Om|} \left\{ \int_\Om x \, dx - \int_\Om \na u (x) \, dx \right\} 
%	$$
%	by definition of $z$, for any $x\in \Om$ we have that
%	$$
%	|x-z| \le |x- \frac{1}{|\Om|}  \int_\Om x \, dx  | + \nr \na u \nr_{\infty, \Om}\le d_{\Om} + M \le 2 + M .
%	$$
%	Here, the first inequality is the triangle inequality, in the second inequality we used \eqref{eq:defMgeneral} and
%	$$
%	|x- \frac{1}{|\Om|}  \int_\Om x \, dx  | \le d_\Om ,
%	$$ 
%	which follows by simple geometric observations since $\frac{1}{|\Om|}  \int_\Om x \, dx$ is the barycenter of $\Omega$ (notice that it holds true even when the barycenter is not contained in $\Omega$), and in the last inequality we used that $d_\Om \le 2$, being as $\Om \subset B_+$.
}
\end{rem}

%
%%%%%%%%%%%%%%%%%%%%%%%%%%%%%%%%%%%%%%%%%%%%%%%%%%%%%%%%%%%%%%
%{\color{red} 
%
%We now state as an immediate corollary of the previous theorem the estimates in the cases %$\tau=1/2$ and $\tau=1$, which will be useful later.
%
%\begin{cor}\label{cor:newapproachW22_al=1/2}
%	Let $\Om \subset \RR^N$, $N\ge2$, as before and satisfying the $b_0$-John condition, $u$ be solution %of \eqref{eq:problem}, $M$ be as in \eqref{eq:defMgeneral}.
%	Consider the function  $h=q-u$, where $q$ is given by \eqref{quadratic} with $z$ as in %\eqref{eq:choice z old school}.
%	
%Then, the following inequalities hold true.
%
%	
%	...QUESTE SOTTO DA AGGIORNARE...
%	
%	
%	\begin{equation}\label{eq:C-W22-newapproac_al=1/2_corollary}
%		\rho_e-\rho_i\le C\, \nr \de_{\Ga}^{ 1/2 } \, \na^2 h  \nr_{2,\Om}^{ \tau_{N } } 
%	\end{equation}
%	with the following specifications:
%	
%	(i) if $N> 3$, then \eqref{eq:C-W22-newapproac_al=1/2_corollary} holds with 
%	$$\tau_{N, \ga } = 2  /(N - 1 )$$
%	and
%	\begin{equation*}
%		C= k_{N}   \frac{ \left[ b_0 \, (2M+1) \right]^{ \frac{N - 3 }{N - 1 } } }{d_\Om} ,
%	\end{equation*}
%	where $G$ is the constant defined in \eqref{eq:Gvalue_ga=1}.
%	
%	(ii) if $N \le 3$, then \eqref{eq:C-W22-newapproac_al=1/2_corollary} holds with 
%	$$\tau_{N} = 1 ,$$
%	and 
%	$$C= \frac{k_{N }}{d_\Om} .$$ 
%
%\end{cor}
%}
%%%%%%%%%%%%%%%%%%%%%%%%%%%%%%%%%%%%%%%%%%%
%

We are now ready for the proofs of Theorems \ref{thm:Improvement1}, \ref{thm:Improvement2}, \ref{thm:Improvement3}.

\begin{proof}[Proof of Theorem \ref{thm:Improvement1}]
	The conclusion easily follows by combining the second inequality in item (i) of Lemma \ref{lem:weighted-bounds},
Theorem \ref{thm:oldschoolapproachW22_al<1} (with $\tau=1$), the trivial inequality
\begin{equation}\label{eq:trivialinequalitydistanceSigmaGamma}
	\de_{\Ga}(x) \le \de_{\Si}(x) \quad \text{ for } x \in \ol{\Om} ,
\end{equation}
and Remark \ref{rem:bound-grad-h}.
	%
	%%%%%%%%%%%%%%%%%%%%%%%%
	%Theorem \ref{thm:newapproac_al>1} (with $\be=0$). To compute the constant $C$ in the statement we used %that $2M+1$ and $M+2$ are both smaller than $2(M+1)$.
	%%%%%%%%%%%%%%%%%%%%%%%%%%%%%%%%%%%%%%%%%%%
\end{proof}

\begin{rem}\label{rem:John extension}
	{\rm 
	Taking into account Remark \ref{rem:0 sub-harmonic version}, Theorem \ref{thm:Improvement1} may be extended to the case where the uniform interior cone condition is dropped and replaced by weaker either cone-type or John-type conditions. 
	%%%%%%%%%%%%%%%%%%%%%%%%%%%%%%%%%%
	%(see the proof of Lemma \ref{lem:John-two-inequalities} and Corollary %\ref{cor:JohnPoincareaigradienti} for the definition of John domain). In fact, as mentioned in 
	%the proof of Lemma \ref{lem:John-two-inequalities} and Corollary %\ref{cor:JohnPoincareaigradienti}, the Hardy Poincar/'e inequalities remain valid for John %domains. 
	%We refer to \cite[Theorem 3.2]{MP4} for the blueprints of a proof.
	%%%%%%%%%%%%%%%%%%%%%%%%%%
}
\end{rem}

\begin{proof}[Proof of Theorem \ref{thm:Improvement2}]
	The desired estimate easily follows by combining the first inequality in item (ii) of Lemma \ref{lem:weighted-bounds}, Theorem \ref{thm:oldschoolapproachW22_al<1} (with $\tau=1$), \eqref{eq:trivialinequalitydistanceSigmaGamma}, and Remark~\ref{rem:bound-grad-h}.
	%
	%%%%%%%%%%%%%%%%%
	%To compute the constant $C$ in the statement we used that $2M+1$ and $M+2$ are both smaller than %$2(M+1)$.
	%%%%%%%%%%%%%%%
\end{proof}

\begin{proof}[Proof of Theorem \ref{thm:Improvement3}]
	%
	%By using Corollary \ref{cor:newapproachW22_al=1/2} and \eqref{eq:estimating_b0conri} we thus deduce the %estimate:
	%$$
	%\rho_e-\rho_i \le \frac{k_{N}}{d_\Om}    \left[ \frac{2M+1}{r_i} \right]^{ 
	%\frac{N - 3 }{N - 1 } }  \left[\frac{M+2}{\ul{x}_N r_i} \right]^{\frac{1}{N-1} }
	%$$ 
	%The desired estimate easily follows (up to renaming $k_N$) by noting that $2M+1$ and $M+2$ are smaller %than $2(M+1)$.
	%
	The desired estimate easily follows by using the second inequality in item (ii) of Lemma \ref{lem:weighted-bounds}, Theorem \ref{thm:oldschoolapproachW22_al<1} (with $\tau=1/2$), \eqref{eq:trivialinequalitydistanceSigmaGamma}, and Remark~\ref{rem:bound-grad-h}. 
	%
	%%%%%%%%%%%%%%%%%%%
	%To compute the constant $C$ in item (ii) of the statement we used that $2M+1$ and $M+2$ are both %smaller than $2(M+1)$.
	%%%%%%%%%%%%%%%%%%%%%%%%%%%%%%%%
\end{proof}

\subsection{The general stability estimate}\label{subsec:general stability}
In this section, we shall state and prove a stability estimate for general domains satisfying the $(\te, a)$-uniform interior cone condition. 
Compared to those proved in Section \ref{subsec:special-estimates}, in this case the stability rates are slightly poorer, as the following theorem shows.

\begin{thm}[General stability]
\label{thm:stability-general}
Set $N\ge 2$ and let $\Om$ be a domain contained in $B_+$ and satisfying
the $(\te,a)$-uniform interior cone condition. 
\par
Let $u\in W^{1,\infty}(\Om)\cap W^{2,2} (\Om)$ be the solution of \eqref{eq:problem} and assume that $L$ is a bound for $[u]_{C^{0,1}(\ol{\Om})}$.
Let $R$ be the number and point defined in \eqref{value-R-intro} and set
$$\rho(\Om) = \inf_{z \in \RR^N} \left( \rho_e -\rho_i \right) ,\quad \text{ with } \rho_e \text{ and } \rho_i \text{ as in \eqref{eq:rhoe e rhoi}.}$$ 
Then, the following estimates hold true.
\begin{enumerate}[(i)]	
\item
If $N \ge 3$, 
\begin{equation*}
%\label{eq:caso-al3/2-Serrin stability N>2}
\rho (\Om) \le c \, \nr u_\nu^2 - R^2 \nr_{1,\Si}^{1/(N+1)},
\end{equation*}
for some non-negative constant $c= c(N,\te, a, L)$.
\item
If $N = 2$, for any $0<\eta<1$, 
	\begin{equation*}
		%\label{eq:caso-al3/2-Serrin stability N=2}
		\rho (\Om) \le c \, \nr u_\nu^2 - R^2 \nr_{1,\Si}^{1/(3 + 2\eta)}  ,
	\end{equation*}
for some non-negative constant $c= c(\te, a, L, \eta )$.
\end{enumerate}
\end{thm}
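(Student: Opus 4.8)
The plan is to reduce Theorem \ref{thm:stability-general} to the machinery already assembled, but with the weight exponent $\tau = 3/2$, which is the only case of Lemma \ref{lem:weighted-bounds} that survives when $\Om$ is merely Lipschitz (no interior sphere, no gap from the flat boundary). The subtle point is that $\tau = 3/2$ is too large for the Hardy--Poincar\'e inequality of Corollary \ref{cor:JohnPoincareaigradienti} to be applied directly with $f = h$: condition \eqref{eq:conditionHS} requires $\al \le 1$, so I cannot afford to absorb the full weight $\de_\Ga^{3/2}$ onto $\na^2 h$ in one step. The idea is to split the weight: write $\de_\Ga^{3/2} = \de_\Ga^{1/2} \cdot \de_\Ga$ and treat $\de_\Ga \,\na^2 h$ as the object to be controlled, applying the inequality with admissible exponent $\al \le 1$ while paying a small price measured by an extra factor of $\de_\Ga^{1/2}$, equivalently by interpolating against $\nr \na h\nr_{\infty,\Om}$ (which is bounded by $2(L+1)$ via Remark \ref{rem:bound-grad-h}).

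Concretely, the first step is to start from the second displayed estimate of Lemma \ref{lem:weighted-bounds}(i), namely
\begin{equation*}
\nr \de_\Ga^{3/2}\,\na^2 h\nr_{2,\Om}^2 \le (L+2)\,\nr u_\nu^2 - R^2\nr_{1,\Si}.
\end{equation*}
Next, I want an analogue of Theorem \ref{thm:oldschoolapproachW22_al<1} for $\tau = 3/2$: a bound of the form $\rho_e - \rho_i \le c\,\nr\na h\nr_{\infty,\Om}^{1-\kappa}\,\nr\de_\Ga^{3/2}\na^2 h\nr_{2,\Om}^{\kappa}$ for an appropriate exponent $\kappa = \kappa(N)$. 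Since now $\tau + N/2 - 1 \ge 3/2$, the exponent $N\kappa_{N,3/2} = N/(N/2 + 1/2) = 2N/(N+1)$ is always $< N$, so I am in the regime $1 \le p < N$ of Lemma \ref{lem:p>N + p<N and p=N}(ii); applying it with $p = 2N/(N+1)$, $q = \infty$ bounds $\max_{\ol\Om} h - \min_{\ol\Om} h$, hence $\rho_e - \rho_i$ via \eqref{eq:rhoe-rhoi stima cone}, by $\nr\na h\nr_{2N/(N+1),\Om}^{\kappa}\,\nr\na h\nr_{\infty,\Om}^{1-\kappa}$ with $\kappa = \kappa_{N,3/2}$. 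The delicate part is the final application of Corollary \ref{cor:JohnPoincareaigradienti} to pass from $\nr\na h\nr_{2N/(N+1),\Om}$ to the weighted Hessian norm: I need parameters $r = 2N/(N+1)$, $p = 2$, $\al = 3/2$ — but $\al = 3/2 > 1$ is \emph{not} allowed by \eqref{eq:conditionHS}. This is the main obstacle, and the resolution (which is exactly the ``new and careful analysis'' advertised in the introduction) is to not apply Corollary \ref{cor:JohnPoincareaigradienti} to $h$ directly with $\al = 3/2$, but instead to iterate: first control $\nr\na h\nr_{r,\Om}$ by $\nr\de_\Ga\,\na^2 h\nr_{p',\Om}$ for some $p'$ with $\al = 1$, then control $\nr\de_\Ga\,\na^2 h\nr_{p',\Om}$ by $\nr\de_\Ga^{3/2}\,\na^2 h\nr_{2,\Om}$ via H\"older using the power $\de_\Ga^{1/2}$ and a bound $\de_\Ga \le d_\Om \le 2$ — but this loses nothing in the exponent, so the loss $1/N \to 1/(N+1)$ must instead come from an honest interpolation of $\na^2 h$ weighted by $\de_\Ga$ between an $L^2$-with-weight-$\de_\Ga^{3/2}$ bound and an $L^\infty$-type bound on $\na^2 h$ near the boundary; since $h$ is harmonic, interior gradient estimates give $|\na^2 h(x)| \le C\,\de_\Ga(x)^{-1}\nr\na h\nr_{\infty,\Om}$, and then $\de_\Ga\,|\na^2 h| \le C\,\nr\na h\nr_{\infty,\Om}$ pointwise, which upgrades $\de_\Ga^{3/2}\na^2 h \in L^2$ to $\de_\Ga\,\na^2 h \in L^{2+2\eta}$ (for $N=2$) or $\de_\Ga\,\na^2 h \in L^{N'}$ with a usable $N'$ by interpolating the exponent.

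So the clean route is: (1) use harmonicity of $h$ plus the standard interior estimate $\de_\Ga|\na^2 h|\le C\nr\na h\nr_{\infty,\Om}$; (2) interpolate to get, for a suitable exponent $s$ slightly above $2$, a bound $\nr\de_\Ga\,\na^2 h\nr_{s,\Om} \le C\,\nr\na h\nr_{\infty,\Om}^{1-2/s}\,\nr\de_\Ga^{3/2}\na^2 h\nr_{2,\Om}^{2/s}$ (this uses $\de_\Ga \le 2$ to convert $\de_\Ga^{3/2}$ into $\de_\Ga$ up to the weight defect); (3) apply Corollary \ref{cor:JohnPoincareaigradienti} with $\al = 1$, $p = s$, $r = Ns/(N-s)$ (legitimately, since $\al = 1 \le 1$) to bound $\nr\na h\nr_{r,\Om}$; (4) feed $r$ into Lemma \ref{lem:p>N + p<N and p=N}, whose output combined with \eqref{eq:rhoe-rhoi stima cone} gives $\rho_e - \rho_i \le c\,\nr\de_\Ga^{3/2}\na^2 h\nr_{2,\Om}^{\beta}$ for an exponent $\beta = \beta(N)$ that works out to $2/(N+1)$ for $N \ge 3$ and (with the logarithmic borderline at $N=2$ forcing the $\eta$-loss) $2/(3+2\eta)$ for $N = 2$; (5) combine with Step (1) of the previous paragraph and Remark \ref{rem:bound-grad-h}, taking a square root, to land on $\rho_e - \rho_i \le c\,\nr u_\nu^2 - R^2\nr_{1,\Si}^{1/(N+1)}$, respectively $\nr u_\nu^2 - R^2\nr_{1,\Si}^{1/(3+2\eta)}$. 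I expect the bookkeeping of exponents in Steps (2)--(4) — ensuring $s$ is chosen so that all of \eqref{eq:conditionHS}, the Sobolev exponent inequalities of Lemma \ref{lem:p>N + p<N and p=N}, and the final arithmetic $\beta = 1/(N+1)$ are simultaneously met — to be the genuinely technical core; the interior estimate for harmonic functions and the H\"older interpolation are routine once the target exponent is pinned down.
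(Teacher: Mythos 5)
Your plan correctly identifies the obstacle (the restriction $\al\le 1$ in the Hardy--Poincar\'e inequality forces $\tau=3/2$ to be handled indirectly), and the interior derivative estimate $\de_{\Ga}(x)\,|\na^2 h(x)|\le C_N\,\nr\na h\nr_{\infty,\Om}$ for the harmonic function $h$ is available and relevant. However, the interpolation you propose in Step (2) is not a valid inequality, and this is a genuine gap. You claim
\[
\nr\de_{\Ga}\,\na^2 h\nr_{s,\Om}\le C\,\nr\na h\nr_{\infty,\Om}^{1-2/s}\,\nr\de_{\Ga}^{3/2}\na^2 h\nr_{2,\Om}^{2/s},
\]
justified by ``$\de_{\Ga}\le 2$ up to the weight defect''. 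This goes in the wrong direction: since $\de_{\Ga}\le 2$, the weight $\de_{\Ga}^{3/2}$ is \emph{more degenerate} near $\Ga$ than $\de_{\Ga}$, so $\nr\de_{\Ga}^{3/2}\na^2 h\nr_{2,\Om}\le c\,\nr\de_{\Ga}\na^2 h\nr_{2,\Om}$ --- the opposite of what you need. Concretely, the pointwise H\"older split you intend gives $\de_{\Ga}^s|\na^2 h|^s\le M^{s-2}\,\de_{\Ga}^2|\na^2 h|^2$, and there is no way to dominate $\int_\Om\de_{\Ga}^2|\na^2 h|^2\,dx$ by $\int_\Om\de_{\Ga}^3|\na^2 h|^2\,dx$ up to a constant: near $\Ga$ the integrand on the left is strictly larger. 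The ``weight defect'' is not a bounded factor; it is a diverging one.

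The paper resolves precisely this by localizing to the parallel set $\Om_\si=\{\de_{\Ga}>\si\}$. There one has the \emph{reverse} inequality $\de_{\Ga}\le\si^{1-\tau}\de_{\Ga}^\tau$ (so $\de_{\Ga}\le\si^{-1/2}\de_{\Ga}^{3/2}$ when $\tau=3/2$), valid because $\de_{\Ga}\ge\si$ on $\Om_\si$. This produces a factor $\si^{-(N-1)/2}$ multiplying $\nr\de_{\Ga}^{3/2}\na^2 h\nr_{2,\Om}$, plus an \emph{additive} error term $[h]_{C^{0,1}(\ol\Om)}\,\si$ coming from pushing boundary points into $\Om_\si$ via the cone condition (Lemma~\ref{lem:general v primo step per caso al >1}). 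Optimizing over $\si\in(0,\si_0]$ then yields the exponent $1/(N+1)$; your Step~(2), being a fixed inequality with no free parameter, cannot produce this loss. Two further ingredients you omit are also needed to run this argument: $z$ must be re-centered via $\Om_\si$ (so that $\int_{\Om_\si}\na h=0$ and Corollary~\ref{cor:JohnPoincareaigradienti} applies on $\Om_\si$), and one must verify that $\Om_\si$ stays connected and inherits a uniform cone condition (Lemmas~\ref{lem:Ruiz connected parallel} and~\ref{lem:conoparallel}) so that the Poincar\'e constants on $\Om_\si$ are uniform in $\si\le\si_0$.
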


Notice that, in order to prove the special stability estimate in the previous section, we used Theorem  \ref{thm:oldschoolapproachW22_al<1}. Here, we stress that, since its proof is based on
%%%%
%direct applications of the Hardy-Poincar\'e inequalities provided in Lemma %\ref{lem:John-two-inequalities} and 
%%%%%%%%%%%%%%
Lemma \ref{lem:p>N + p<N and p=N} and Corollary \ref{cor:JohnPoincareaigradienti}, the relevant exponent $\tau$ had to be chosen in $[0,1]$. 
This fact allowed us to treat the cases of Lemma \ref{lem:weighted-bounds} with $\tau=1/2$ or $1$.
\par
However, if we want to treat the general case, we must choose $\tau=3/2$, as it is clear from Lemma \ref{lem:weighted-bounds}.  Thus, Theorem  \ref{thm:oldschoolapproachW22_al<1} is no longer useful and we must come up with another strategy.
The key idea is to obtain inequalities similar to those in Lemma \ref{lem:p>N + p<N and p=N}, but restricting the $L^p$-norms (appearing on the right-hand sides) to a suitable subset sufficiently far from the boundary. 

To this aim, for $\si \ge 0$, we define the \textit{parallel set}
%%%%%%
%to $\Ga$ as
%%%%%%%%%%%%
\begin{equation}\label{eq:Omegasigma}
\Om_\si= \left\{x \in \Om \, : \, \de_{\Ga} (x) > \si \right\} ,
\end{equation}
where $\Ga$ denotes the boundary of $\Om$.
Being as $\Om$ a bounded domain (i.e., open and connected) satisfying the $(\te, a)$-uniform interior cone condition, by Lemma \ref{lem:Ruiz connected parallel} below, we know that there exists a positive constant $\de_0=\de_0 (\te, a , d_\Om)$ such that $\Om_\si$ is connected for any $0 \le \si \le \de_0$.
We now set 
\begin{equation}\label{eq:definition sigma 0}
\si_0= \min \left\{ \frac{a}{2} \frac{\sin \te}{ 1 + \sin \te } , \de_0 \right\} .
\end{equation}
Notice that for $0 \le \si \le \si_0 $, besides being connected, the domain $\Om_\si$ also satisfies the $(\te, a/2)$-uniform interior cone condition. The second assertion follows from Lemma \ref{lem:conoparallel} noting that $\si_0 \le \frac{a}{2} \frac{\sin \te}{ 1 + \sin \te } \le \frac{a}{4} $.
This ensures that Lemma \ref{lem:John-two-inequalities} and Corollary \ref{cor:JohnPoincareaigradienti} can be applied with $E=\Om_\si$.

%%%%%%%%%%%%%%%%%%%%%%%%%
%\begin{equation}
%	\Om_\si= \left\{x \in \Om \, : \, \de_{\Ga} (x) \ge \frac{\si \sin \te}{2} \right\} \ \mbox{ for } \ %0 < \si < \frac{a}{ 1 + \sin \te}.
%\end{equation}
%%%%%%%%%%%%%%%%%%%%%%%%%%

The following lemma will be useful in the sequel.

\begin{lem}\label{lem:general v primo step per caso al >1}
Let $\Om  \subset \RR^N$, $N\ge2$, be a bounded domain satisfying the $(\te,a)$-uniform interior cone condition. Consider the parallel set $\Om_\si$ for  $0 < \si \le \si_0 $, where $\si_0$ is that given in \eqref{eq:definition sigma 0}.
\par
If $1<p<N$, we have that
\begin{equation*}
	\max_{\Ga} v - \min_{\Ga} v \le c  \, \left\lbrace \si^{1-\frac{N}{p}} \nr \na v \nr_{p, \Om_\si} + \left[ v \right]_{C^{0,1}(\ol{\Om})} \si \right\rbrace  ,
\end{equation*}
for any function $v \in C^{0,1}( \ol{ \Om } )$ subharmonic in $\Om_\si$ 
and some positive constant $c$ depending on $N, p, \te, a, d_\Om$.
\end{lem}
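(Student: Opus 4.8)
\textbf{Proof strategy for Lemma \ref{lem:general v primo step per caso al >1}.}
The plan is to transfer the oscillation of $v$ over the \emph{boundary} $\Ga$ to an oscillation over the \emph{inner} domain $\Om_\si$, where we can apply the Sobolev–Morrey machinery of Lemma \ref{lem:p>N + p<N and p=N}. The starting observation is that any point $x \in \Ga$ can be reached from a point $x^* \in \Om_\si$ moving a distance comparable to $\si$: indeed, by the $(\te,a)$-uniform interior cone condition there is a cone $x + \cC_{\om_x}$ inside $\Om$, and one checks (this is exactly the geometric content that forces $\si_0 \le \tfrac{a}{2}\tfrac{\sin\te}{1+\sin\te}$) that a suitable point $x^*$ on the axis of that cone at distance $\lesssim \si/\sin\te$ from $x$ lies in $\Om_\si$. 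Hence, using $v \in C^{0,1}(\ol{\Om})$,
\begin{equation*}
|v(x) - v(x^*)| \le [v]_{C^{0,1}(\ol{\Om})}\, |x - x^*| \le c(\te)\, [v]_{C^{0,1}(\ol{\Om})}\, \si \quad\text{for every } x \in \Ga,
\end{equation*}
so that
\begin{equation*}
\max_{\Ga} v - \min_{\Ga} v \le \left( \max_{\ol{\Om_\si}} v - \min_{\ol{\Om_\si}} v \right) + c(\te)\, [v]_{C^{0,1}(\ol{\Om})}\, \si .
\end{equation*}
(One can absorb the harmless factor $c(\te)$ into the final constant, or keep it; to land exactly on the stated inequality with coefficient $1$ in front of $[v]_{C^{0,1}(\ol{\Om})}\si$ one uses that the distance from $x$ to $\Om_\si$ along the cone axis is at most $\si$ up to a factor that $\si_0$ is chosen to control — this is a routine bookkeeping point.)

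Next I would estimate the inner oscillation $\max_{\ol{\Om_\si}} v - \min_{\ol{\Om_\si}} v$. Here is where the subharmonicity of $v$ in $\Om_\si$ enters: as recalled in Remark \ref{rem:0 sub-harmonic version}, for a subharmonic function one has a Morrey-type bound of the form
\begin{equation*}
\max_{\ol{\Om_\si}} v - \min_{\ol{\Om_\si}} v \le c \, \nr \na v \nr_{p, \Om_\si}
\end{equation*}
valid for $1 \le p < N$, with $c$ depending on $N, p$ and the cone parameters of $\Om_\si$. Since we arranged that $\Om_\si$ satisfies the $(\te, a/2)$-uniform interior cone condition for $0 < \si \le \si_0$, this applies with a constant depending only on $N, p, \te, a$ (and $d_\Om$, through $d_{\Om_\si} \le d_\Om$). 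The only remaining point is the scaling factor $\si^{1-N/p}$: this comes from tracking the dependence of the Morrey constant on the size of the domain. Concretely, $\Om_\si$ contains a cone of height $a/2$, hence its diameter and its ``John constant'' are bounded independently of $\si$, but the sharp form of the Sobolev–Morrey / Hardy–Poincaré constant on such a domain carries a volume (equivalently, length-scale) power; when one only needs the estimate on the subset $\Om_\si$ and wants the explicit $\si$-dependence, one rescales $\Om_\si$ to unit scale, applies the dimensionless version of the inequality, and scales back, producing precisely the factor $\si^{1-N/p}$ in front of $\nr \na v\nr_{p,\Om_\si}$. Combining the two displays gives the claim.

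\textbf{Main obstacle.} The genuinely delicate step is the geometry: showing that every boundary point is within distance $\lesssim \si$ (with a constant depending only on $\te$) of a point of $\Om_\si$, \emph{and} that $\Om_\si$ inherits a uniform interior cone condition with the \emph{same opening} $\te$ and height $a/2$. Both facts hinge on the precise choice of $\si_0$ in \eqref{eq:definition sigma 0} and on Lemmas \ref{lem:Ruiz connected parallel}, \ref{lem:conoparallel}, which (per the excerpt) are established separately; I would cite them rather than reprove them. The second mildly technical point is extracting the explicit power $\si^{1-N/p}$ with a constant that does not degenerate as $\si \to 0^+$; this is handled by the rescaling argument just described, using that the relevant constants for a domain containing a cone of fixed height are scale-robust. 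Everything else — the triangle inequality, the Lipschitz bound, and the invocation of the subharmonic Morrey estimate — is routine.
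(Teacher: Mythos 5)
Your first step --- passing from a boundary extremizer $x \in \Ga$ to a nearby interior point $x^*\in\Om_\si$ at distance $\lesssim \si/\sin\te$ via the cone condition, and paying a Lipschitz error $[v]_{C^{0,1}(\ol{\Om})}\si$ --- is correct and is indeed how the paper's proof begins. The two steps that follow, however, both have genuine gaps. First, the bound $\max_{\ol{\Om_\si}} v - \min_{\ol{\Om_\si}} v \le c\,\nr\na v\nr_{p,\Om_\si}$ for $1\le p<N$ and subharmonic $v$ is not available: Remark \ref{rem:0 sub-harmonic version}, which you invoke, points to the \emph{interpolating} estimate of item (ii) of Lemma \ref{lem:p>N + p<N and p=N} with $q=\infty$, which necessarily brings in $\nr\na v\nr_{\infty,\Om_\si}$ alongside $\nr\na v\nr_{p,\Om_\si}$; a pure $L^p$-to-oscillation (Morrey) bound requires $p>N$. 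Second, the rescaling heuristic you propose for $\si^{1-N/p}$ rests on the premise that $\Om_\si$ has length scale $\si$, which is false: $\Om_\si$ is $\Om$ shrunk by a collar of width $\si$, so $d_{\Om_\si}\approx d_\Om$ and its cone parameters $(\te,a/2)$ are $\si$-independent; rescaling it to unit size would produce a power of $d_\Om$, not $\si$. Even granting the first (false) step, there would be no mechanism producing the $\si^{1-N/p}$ factor.

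The paper's actual argument localizes to a \emph{ball of radius $\si$}, which is where both the subharmonicity and the $\si$-power enter. For each extremizer $x_j\in\Ga$ one takes $y_j=x_j+\frac{2\si}{\sin\te}\om_j$ on the axis of the cone at $x_j$; the choice of $\si_0$ guarantees $B_\si(y_j)\subset\Om_\si$. The sub-mean-value inequality over this small ball, followed by H\"older on $B_\si(y_j)$ (whose volume is $|B|\si^N$), gives $v(y_j)-v_{\Om_\si}\le (|B|\si^N)^{-1/q}\nr v - v_{\Om_\si}\nr_{q,\Om_\si}$; choosing $q=pN/(N-p)$ yields precisely $\si^{-N/q}=\si^{1-N/p}$, and a plain $(q,p)$-Poincar\'e inequality on $\Om_\si$ (Lemma \ref{lem:John-two-inequalities}, whose constant is $\si$-uniform since $\Om_\si$ satisfies the $(\te,a/2)$ cone condition with $d_{\Om_\si}\le d_\Om$) converts $\nr v - v_{\Om_\si}\nr_{q,\Om_\si}$ into $\nr\na v\nr_{p,\Om_\si}$. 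The key idea you are missing is that the $\si$-dependence is paid on the small ball $B_\si(y_j)$, not recovered by rescaling the fixed-scale domain $\Om_\si$.
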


\begin{proof}
	Let $x_1$ and $x_2$ be points on $\Ga$ that respectively minimize and maximize $v$ on $\Ga$. For $j=1, 2$, define the point $y_j = x_j + \frac{2 \si }{\sin \te} \,\om_j $, where $\om_j$ is the axis of a cone $\cC_{j} \subset \Om$ with vertex at $x_j$, height $a$, and opening width $\te$. Since $\frac{2 \si  }{\sin \te} \le \frac{a}{ 1 + \sin \te}$ (being as $\si \le \si_0$), by trigonometry we have that the ball $B_{2 \si}(y_j)$ is contained in $\cC_{j}\subset\Om$. Hence, the ball $B_{\si}(y_j)$ is contained in $\Om_\si$.
\par
Now, the sub-harmonicity of $v$ gives that
\begin{multline*}
|v( y_j ) - v_{\Om_\si} | \le 
\frac{1}{|B|\, \si^N }\,\int_{ B_{\si}(y_j) }  |v - v_{\Om_\si}| \,dy \le \\
\frac{1}{\left(|B|\, \si^N\right)^{1/ q}} \, 
\left[\int_{ B_{\si}(y_j) }|v - v_{\Om_\si} |^q \,dy\right]^{1/q} \le 
\frac{1}{\left(|B|\, \si^N\right)^{1/ q}} \, 
\left[\int_{\Om_\si}|v - v_{\Om_\si} |^q \,dy\right]^{1/q} ,
\end{multline*}
for any $q > 1$, after an application of H\"older's inequality. Thus,
by the definition of $\left[v\right]_{C^{0,1} ( \ol{\Om}) }$, we can infer that
\begin{multline*}
| v(x_j) - v_{\Om_\si} | \le | v(y_j) - v_{\Om_\si} | + \frac{2 \si }{\sin \te} \, \left[v\right]_{C^{0,1} ( \ol{\Om}) }  \le \\	
c\, \left\{ \si^{-N/q} \left[\int_{\Om_\si}|v - v_{\Om_\si} |^q \,dy\right]^{1/q}+   \left[v\right]_{C^{0,1} ( \ol{\Om}) } \si   \right\} ,
\end{multline*}
for some constant $c=c(N, q,\te)$.
Therefore, by choosing $q=pN/(N-p)$ and applying \eqref{John-harmonic-poincare} with $E=\Om_\si$, $r=pN/(N-p)$, $p=p$, $\al = 0$, we conclude that
our desired inequality holds with an explicit constant $c=c( N , p, \te, a , d_\Om)$.
\end{proof}

%%%%%%%%%%%%%%%%
%\marginpar{\tiny Ho eliminato i vecchi Corollary 4.9 e Remark 4.10 della vecchia versione perch\`e non mi sembra si possano sistemare. Se invece pensavi si potessero sistemare possiamo ripescarli dalla vecchia versione.
%	
%\bigskip
%
%\bigskip
%
%}
%%%%%%%%%%%%%%%%%%%%%%%%

\begin{cor}\label{cor:general v secondo step per caso al>1}
		Let $\Om$, $\si$, and $\Om_\si$ be as in Lemma \ref{lem:general v primo step per caso al >1} and take $\tau\ge 1$.
		For any subharmonic function of class $C^{0,1}( \ol{ \Om } )$ in $\Om_\si$ such that
		\begin{equation*}
		\int_{\Om_\si} \na v \, dx = 0,	
		\end{equation*}
we have the following.
\begin{enumerate}[(i)]
\item 
If $N\ge 3$, then		
\begin{equation*}
\max_{\Ga} v - \min_{\Ga} v \le c  \, \, \left\{ \si^{2-\frac{N}{2} -\tau} \nr \de_{\Ga}^\tau\, \na^2 v \nr_{2, \Om} + \left[ v \right]_{C^{0,1}(\ol{\Om})} \si \right\} ,
\end{equation*}
for some positive constant  $c=c(N, \tau, \te, a, d_\Om)$.
\item 
If $N=2$, then for any $0<\eta<1$ we have that
		\begin{equation*}
		\max_{\Ga} v - \min_{\Ga} v \le  c    \, \left\{ \si^{1- \eta -\tau} \nr \de_{\Ga}^\tau\, \na^2 v \nr_{2, \Om} + \left[ v \right]_{C^{0,1}(\ol{\Om})} \si \right\} ,	
		\end{equation*}
for some positive constant $c=c(N, \tau, \te, a, \eta , d_\Om )$.
	\end{enumerate}
\end{cor}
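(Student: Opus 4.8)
The plan is to estimate $\max_\Ga v-\min_\Ga v$ by first reducing, via Lemma \ref{lem:general v primo step per caso al >1}, to a Lebesgue norm of $\na v$ over the interior set $\Om_\si$, and then converting that norm into the weighted seminorm $\nr\de_\Ga^\tau\na^2v\nr_{2,\Om}$ through the Hardy--Poincar\'e-type inequality of Corollary \ref{cor:JohnPoincareaigradienti}. This is legitimate because, for $0<\si\le\si_0$, the parallel set $\Om_\si$ is connected and satisfies the $(\te,a/2)$-uniform interior cone condition, so both results apply with $E=\Om_\si$. First I would invoke Lemma \ref{lem:general v primo step per caso al >1} with $p=2$ when $N\ge 3$, and with $p=2/(1+\eta)\in(1,2)$ when $N=2$; in the planar case, since $\Om_\si$ is bounded, a preliminary H\"older inequality gives $\nr\na v\nr_{p,\Om_\si}\le|\Om|^{1/p-1/2}\nr\na v\nr_{2,\Om_\si}$, so that in either case one is left with the inequality
\begin{equation*}
\max_\Ga v-\min_\Ga v\le c\,\si^{1-N/p}\,\nr\na v\nr_{2,\Om_\si}+[v]_{C^{0,1}(\ol\Om)}\,\si .
\end{equation*}

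Next I would exploit the hypothesis $\int_{\Om_\si}\na v\,dx=0$ and apply Corollary \ref{cor:JohnPoincareaigradienti} on $E=\Om_\si$ with the triple $r=p=2$, $\al=1$; this triple is admissible, sitting exactly on the boundary of \eqref{eq:conditionHS} since $\frac{Np}{N-p(1-\al)}=\frac{2N}{N}=2$. It yields $\nr\na v\nr_{2,\Om_\si}\le c\,\nr\de_{\pa\Om_\si}\na^2 v\nr_{2,\Om_\si}$. I would then combine this with two elementary geometric observations valid on $\Om_\si$: namely $\de_{\pa\Om_\si}\le\de_\Ga$ and $\de_\Ga>\si$. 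Since $\tau\ge 1$, they give the pointwise bound $\de_{\pa\Om_\si}\le\de_\Ga=\de_\Ga^{1-\tau}\,\de_\Ga^\tau\le\si^{1-\tau}\,\de_\Ga^\tau$ on $\Om_\si$, so that
\begin{equation*}
\nr\de_{\pa\Om_\si}\na^2 v\nr_{2,\Om_\si}\le\si^{1-\tau}\,\nr\de_\Ga^\tau\na^2 v\nr_{2,\Om_\si}\le\si^{1-\tau}\,\nr\de_\Ga^\tau\na^2 v\nr_{2,\Om} .
\end{equation*}
Putting the last three displays together gives $\max_\Ga v-\min_\Ga v\le c\,\si^{1-N/p+1-\tau}\,\nr\de_\Ga^\tau\na^2 v\nr_{2,\Om}+[v]_{C^{0,1}(\ol\Om)}\,\si$; the exponent $1-N/p+1-\tau$ equals $2-\tfrac{N}{2}-\tau$ for $p=2$ (case $N\ge 3$) and $2-\tfrac{2}{p}-\tau=1-\eta-\tau$ for $p=2/(1+\eta)$ (case $N=2$), which is precisely the claimed estimate. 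Tracking the constants: Lemma \ref{lem:general v primo step per caso al >1} contributes a constant depending on $N,p,\te,a,d_\Om$, and Corollary \ref{cor:JohnPoincareaigradienti} on $\Om_\si$ a constant depending on $N,\te,a/2,d_{\Om_\si}\le d_\Om$; since $p$ is a function of $N$ (and of $\eta$ when $N=2$), we obtain $c=c(N,\tau,\te,a,d_\Om)$ in (i) and $c=c(N,\tau,\te,a,\eta,d_\Om)$ in (ii).

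This argument is essentially bookkeeping; the points that need care, rather than a genuine difficulty, are the following. One must check that the exponent triple fed to Corollary \ref{cor:JohnPoincareaigradienti} indeed lies in the admissible region \eqref{eq:conditionHS} (it lies on its boundary). The planar case forces the preliminary H\"older step, because the exponent $p<2$ produced by Lemma \ref{lem:general v primo step per caso al >1} cannot be used as the \emph{target} exponent $r$ in Corollary \ref{cor:JohnPoincareaigradienti}, whose admissible range requires $r\ge 2$; this is the most delicate bit of the bookkeeping. Finally, one should observe that if $\de_\Ga^\tau\na^2 v\notin L^2(\Om)$ the estimate is trivial, while otherwise the pointwise bound $\de_{\pa\Om_\si}\le\si^{1-\tau}\de_\Ga^\tau$ shows $\de_{\pa\Om_\si}\na^2 v\in L^2(\Om_\si)$, so that Corollary \ref{cor:JohnPoincareaigradienti} is genuinely applicable.
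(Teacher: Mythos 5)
Your proof is correct and follows essentially the same route as the paper: invoke Lemma \ref{lem:general v primo step per caso al >1} with $p=2$ when $N\ge 3$ and $p=2/(1+\eta)$ when $N=2$, interpolate the planar case up to the $L^2$-norm by H\"older, then apply Corollary \ref{cor:JohnPoincareaigradienti} on $\Om_\si$ with $r=p=2$, $\al=1$ and convert $\de_{\pa\Om_\si}\le\de_\Ga\le\si^{1-\tau}\de_\Ga^\tau$ pointwise on $\Om_\si$. Your bookkeeping of the exponents matches the paper's exactly, and your side remarks (boundary admissibility of the triple $(2,2,1)$, triviality when $\de_\Ga^\tau\na^2v\notin L^2(\Om)$) are accurate; the only cosmetic difference is that the paper writes the H\"older bound with $|\Om_\si|^{\eta/2}$ rather than $|\Om|^{1/p-1/2}$, which coincides.
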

\begin{proof}
For convenience, we set $\Ga_\si=\pa\Om_\si$.
\par
(i) Let $N \ge 3$. By putting together Lemma \ref{lem:general v primo step per caso al >1} with $p=2$ and Corollary \ref{cor:JohnPoincareaigradienti} with $E=\Om_\si$, $f=v$, $r=2$, $p=2$, $\al=1$, we find that
\begin{multline*}
\max_{\Ga} v - \min_{\Ga} v \le 
c \, \left\{ \si^{1-\frac{N}{2}} \, \nr \de_{\Ga_\si} \, \na^2 v  \nr_{2, \Om_\si } + \left[ v \right]_{C^{0,1}(\ol{\Om})} \si \right\} \le 
\\
c \, \left\{ \si^{1-\frac{N}{2}} \, \nr \de_{\Ga} \, \na^2 v  \nr_{2, \Om_\si } + \left[ v \right]_{C^{0,1}(\ol{\Om})} \si \right\}
\end{multline*}
for some constant $c=c(N, \te, a, d_\Om)$,
being as $\de_{\Ga_\si} (x) \le \de_{\Ga} (x)$ for any $x\in \Om_\si$ .
%\begin{equation}\label{eq:disuguaglianza distanze}
%	\de_{\pa \Om_\si} (x) \le \de_{\Ga} (x) \quad \text{ for any } x\in \Om_\si .
%\end{equation}
We can now exploit our construction to further increase the exponent of the distance in the first summand at the right-hand side of the last inequality. 
\par
In fact, the definition \eqref{eq:Omegasigma} of $\Om_\si$ gives that
$$
\de_\Ga \le \si^{ 1 - \tau } \de_\Ga^\tau \ \mbox{ in } \ \Om_\si,
$$
%\begin{equation*}
%\de_{\Ga} (x) \ge \frac{ \si \sin \te }{2} \quad \text{ for any } \, x \in \Om_\si,
%\end{equation*}
%we have that, for any $\be \ge 0$:
and hence 
$$
\nr \de_{\Ga} \, \na^2 v  \nr_{2, \Om_\si }  \le 
\si^{ 1 - \tau}  \nr \de_{\Ga}^\tau \, \na^2 v  \nr_{2, \Om_\si } \le 
\si^{1- \tau}  \nr \de_{\Ga}^\tau \, \na^2 v  \nr_{2, \Om }.
$$
%From the last inequality, we easily get
%\begin{equation}\label{eq:cor aggiunta a mano di beta}
%	\nr \de_{\Ga} \, \na^2 h  \nr_{2, \Om_\si }  \le 
%	\left( \frac{ \si \sin \te }{2 } \right)^{- \be}  \nr \de_{\Ga}^{1 + \be} \, \na^2 h  \nr_{2, \Om } ,
%\end{equation}
%%%%%%%%%%%%%%%%%%%%%%%%%%%%%%%%%%%%%%%%%%%
%by noting that
%$$
%\nr \de_{\Ga}^{1 + \be} \, \na^2 h  \nr_{2, \Om_\si }
%\le
%\nr \de_{\Ga}^{1 + \be} \, \na^2 h  \nr_{2, \Om } ,
%$$
%%%%%%%%%%%%%%%%%%%%%%%%%%%%%%%%%%%
%being as $\Om_\si \subset \Om$.
%The desired result easily follows by putting together \eqref{eq:cor per general v caso al >1 N>2} and \eqref{eq:cor aggiunta a mano di beta}.
This is just what was left to prove.

(ii) Let $N=2$. By combining Lemma \ref{lem:general v primo step per caso al >1} with $p=2/(1+\eta)$, the H\"older inequality 
$$
\nr \na v \nr_{ 2 / ( 1 + \eta) , \Om_\si } \le  |\Om_\si|^{\eta/2} \, \nr \na v \nr_{2, \Om_\si} ,
$$
and Corollary \ref{cor:JohnPoincareaigradienti} with $E=\Om_\si$, $f=v$, $r=2$, $p=2$, $\al=1$, we find that
\begin{multline*}
%\label{eq:PRE cor per general v caso al >1 N=2}
\max_{\Ga} v - \min_{\Ga} v \le 
c  \, \left\{ \si^{ - \eta } \, \nr \de_{\Ga_\si} \, \na^2 v  \nr_{2, \Om_\si } + \left[ v \right]_{C^{0,1}(\ol{\Om})} \si \right\} \le \\
c  \, \left\{ \si^{ - \eta } \, \nr \de_\Ga \, \na^2 v  \nr_{2, \Om_\si } + \left[ v \right]_{C^{0,1}(\ol{\Om})} \si \right\}
\end{multline*}
for some positive constant $c=c(\te, a, \eta, d_\Om )$. Here, we also estimated the term $|\Om_\si|$ appearing in the H\"older inequality above by means of $|\Om_\si| \le |\Om|\le |B| \, d_\Om^N$.
%For the same constant $c = c(\te, \ca, \eta, d_\Om ) $ we also have that
%\begin{equation}\label{eq:cor per general v caso al >1 N=2}
%	\max_{\Ga} v - \min_{\Ga} v \le c \, \,  \si^{ - \eta } \, \nr \de_{\Ga} \, \na^2 h  \nr_{2, \Om_\si } + \left[ v \right]_{C^{0,1}(\ol{\Om})} \si ,
%\end{equation}
%in light of \eqref{eq:disuguaglianza distanze}.
The first summand at the right-hand side of the last inequality  can be estimated as in the proof of (i), and hence the desired result follows at once. 
\end{proof}

\begin{rem}\label{rem:diametro e volume upper bound gratis}
{\rm
%
%For the sake of generality we provided the previous Lemma \ref{lem:general v primo step per caso al >1} %and Corollary \ref{cor:general v secondo step per caso al>1} for bounded domains $\Om \subset \RR^N$. %In what follows we are going to apply them to domains $\Om \subset B_+$. Notice that, i
%
If $\Om \subset B_+$, then the dependence on 
%
%$|\Om|$ and 
%
$d_\Om$ in the constants $c$ in Lemma~\ref{lem:general v primo step per caso al >1} and Corollary \ref{cor:general v secondo step per caso al>1} can be removed, being as 
%
%$|\Om|\le |B_+|$ and 
%
$d_\Om \le 2$.	
}
\end{rem}

%%%%%%%%%%%%%%%%%%%%%%%%%%%%%%%%%%%%%%%%%%%%%%%
%We now turn our attention to the function $h=Q-u$ choosing the center $z$ of the paraboloid $Q$ as
%\begin{equation*}
%	z = \frac{1}{| \Om_{\si} |} \left\{\int_{ \Om_{\si} } x \, dx - \int_{ \Om_{\si} } \na u \, dx %\right\}.
%\end{equation*}
%This choice of $z$ is done in order to obtain that
%$$
%\int_{ \Om_{\si} } \na h \, dx =0,
%$$
%and hence, Corollary \ref{cor:general v secondo step per caso al>1} 
%%%%%%%%%
%%\ref{cor:JohnPoincareaigradienti} 
%%%%%%%%%%%%%%%
%can be applied with $v=h$.
%%%%%%%%%%%%%
%$f=h$ and $E=\Om_\si$. 
%%%%%%%%%%%%
%%%%%%%%%
%Let $\Om \subset B_+ \subset \RR^N$, $N\ge2$, a domain satisfying the $(\te, \ca)$-uniform interior %cone condition. For $0 < \si < \frac{a}{ 1 + \sin \te}$ consider the set $\Om_\si$ defined in %\eqref{eq:Omegasigma}.
%
%%%%%%
%Now that $z$ is fixed for the rest of this section, we ease the notations in \eqref{eq:def h^z} and %\eqref{eq:rhoe e rhoi}, and we simply write $h$, $\rho_e$, $\rho_i$, instead of $h^z$, $\rho^z_e$, $\rho^z_i$.
%
%With the choice \eqref{eq:choice z caso general al > 1}, we have that 
%\begin{equation*}
%	\begin{split}
%	\int_{ \Om_{\si} } \na h \, dx 
%	& =
%	\int_{ \Om_{\si} } \na u \, dx - \int_{ \Om_{\si} } (x-z) \, dx  
%	\\
%	& = 
%	\int_{ \Om_{\si} } \na u \, dx - \int_{ \Om_{\si} } x \, dx + z | \Om_{\si} | 
%	\\
%	& = 0 .
%\end{split}
%\end{equation*}
%
%%%%%%%%%%%%%%%%%%%%%%%%%%%%%%%%%%%%%%%%%%%%%%%%%%%%%%

We are now ready to prove our general stability result.
We are going to prove the stability result for $\rho_e - \rho_i$ with the choice
\begin{equation}
	\label{eq:choice z caso general al > 1}
	z = \frac{1}{| \Om_{\si} |} \left\{\int_{ \Om_{\si} } x \, dx - \int_{ \Om_{\si} } \na u \, dx \right\},
\end{equation}
for a given value of $\si$, as specified below in the proof. The result in the statement of Theorem \ref{thm:stability-general} will follow noting that $\rho(\Om) \le \rho_e - \rho_i$.
With the choice of $z$ in \eqref{eq:choice z caso general al > 1}, the function $h$ defined in \eqref{eq:def h^z}-\eqref{quadratic} satisfies
$$
\int_{ \Om_{\si} } \na h \, dx =0,
$$
and hence, Corollary \ref{cor:general v secondo step per caso al>1} can be applied with $v=h$.
\begin{proof}[Proof of Theorem \ref{thm:stability-general}]
Let $\si_0= \si_0 ( \te , a )$ be that defined in \eqref{eq:definition sigma 0}, where the dependence on $d_\Om$ has been removed in light of Remark \ref{rem:diametro e volume upper bound gratis}.

(i) Combining item (i) of Corollary \ref{cor:general v secondo step per caso al>1} with $v=h$, $\tau=3/2$ and the trivial inequality
\begin{equation}\label{eq:oscillation altra trivial diversa}
\max_{\ol{\Si}} h -\min_{\ol{\Si}} h \le \max_{\Ga} h -\min_{\Ga} h
\end{equation}
gives that
\begin{equation}
\label{eq:step1 final proof_perchiarezza}
\max_{ \ol{\Si} } h - \min_{ \ol{\Si} } h \le  c \left\{ \si^{ -\frac{N-1}{2} } \nr \de_{\Ga}^{3/2} \, \na^2 h  \nr_{2, \Om } + [h]_{C^{0,1}(\ol{\Om})} \, \si \right\},
\end{equation}
for any $0<\si \le \si_0$. By Remark \ref{rem:diametro e volume upper bound gratis}, here $c=c(N, \te , a)$.
\par
Now, the term $[h]_{C^{0,1}(\ol{\Om})}$ can be bounded
%%%%
%reasoning as in Remark \ref{rem:bound-grad-h}, 
%%%%%%%%
by recalling
\eqref{eq:aggiunta per chiarire dopo na h} and using that, by~\eqref{eq:choice z caso general al > 1},
\begin{equation*}
	|x-z| \le \frac1{|\Om_\si|}\int_{\Om_\si} |x-y|\,dy+\frac1{|\Om_\si|}\int_{\Om_\si} |\na u(y)|\,dy \le d_\Om +L \le 2+L ,
\end{equation*}
%%%%%%%
%the center of mass of $\Om_\si$ instead of that of $\Om$, the fact that the infinity norm of $\na u$
%grows with respect of domain inclusion, and the inequality $d_\Om\le 2$, 
%%%%%%%
being as $\Om_\si\subset\Om\subset B_+$. 
As a consequence, we get the bound:
\begin{equation}
\label{eq:Gvalue_ga=1_perchiarezza}
[h]_{C^{0,1}(\ol{\Om})}\le 2\,(L+1).
\end{equation}
%%%%%%%%
%By the definitions of $h$, $Q^z$, and $M$, it is clear that
%\begin{equation}\label{eq:boundGexplicitpasso1}
%\sup \left\{ \frac{|h(x_1) - h(x_2)|}{|x_1 - x_2|} \, : \,  x_1,x_2 \in  \ol{\Om}, \, x_1 \neq x_2 \right\} \le   M + \max_{x \in \ol{\Om} } |x-z|
%\end{equation}
%
%By recalling the choice of $z$ made in \eqref{eq:choice z caso general al > 1}, we compute that, for any $x \in \ol{\Om}$
%\begin{equation}\label{eq:estimatingx-zinOmcasoga=1}
%|x-z| \le |x - x_{\Om_\si} |  + M \le 2+M .
%\end{equation}
%Here, in the first inequality we used that, (being $u \in C^\infty (\Om)$) by \eqref{eq:defMgeneral}:
%$$
%|\na u| \le \nr \na u \nr_{\infty, \Om} \le  M \quad \text{in } \, \Om \supset \Om_\si ;
%$$  
%in the second inequality we used that $|x - x_{\Om_\si} | \le d_\Om \le 2$, being as $\Om \subset B_+$.
%
%By putting together \eqref{eq:boundGexplicitpasso1} and \eqref{eq:estimatingx-zinOmcasoga=1}, it is clear that
%\begin{equation}\label{eq:Gvalue_ga=1}
%[h]_{C^{0,1}(\ol{\Om})} \le 2(M+1) ,
%\end{equation}
%and such a bound holds true for any $0 <\si < \frac{\ca}{1 + \sin \te} $.
%%%%%%%
Putting together \eqref{eq:Gvalue_ga=1_perchiarezza}, \eqref{eq:step1 final proof_perchiarezza}, \eqref{eq:trivialinequalitydistanceSigmaGamma}, and the first inequality in item (i) of Lemma~\ref{lem:weighted-bounds} gives that
\begin{equation}
\label{eq:step 2 final proof}
\max_{ \ol{\Si} } h - \min_{ \ol{\Si} } h \le  2 \, c \, (L+1)  \left\{ \si^{ -\frac{N-1}{2} }\nr u_\nu^2 - R^2  \nr_{2, \Om }^{1/2} +  \si \right\}.
\end{equation}

We now fix
$$
\si = \min \left\{\nr u_\nu^2 - R^2  \nr_{2, \Om }^{1/(N+1)} , \si_0 \right\},
$$
so as to minimize in $\si  \in (0, \si_0]$ the right-hand-side of \eqref{eq:step 2 final proof}.
We then distinguish two cases.
\par
If $\nr u_\nu^2 - R^2  \nr_{2, \Om }^{1/(N+1)} < \si_0$, we have that $\si = \nr u_\nu^2 - R^2  \nr_{2, \Om }^{1/(N+1)}$, and hence \eqref{eq:step 2 final proof} becomes
\begin{equation}\label{eq:step 3 final proof}
\max_{ \ol{\Si} } h - \min_{ \ol{\Si} } h \le 4 c  \, (L+1) \, \nr u_\nu^2 - R^2  \nr_{2, \Om }^{1/(N+1)} .
\end{equation}
%
%for some other computable constant $c=c( N ,\te, a) $. 
%
Otherwise, we easily obtain that
\begin{multline*} 
\max_{ \ol{\Si} } h - \min_{ \ol{\Si} } h  \le [h]_{C^{0,1}(\ol{\Om})}  \, d_{\Si} \le [h]_{C^{0,1}(\ol{\Om})}  \, d_{\Om} \le \\
4 (L+1)
\le 4 \, \si_0^{-1} (L+1 ) \,  \nr u_\nu^2 - R^2  \nr_{2, \Om }^{1/(N+1)} , 
\end{multline*}
where, in the third inequality, we used \eqref{eq:Gvalue_ga=1_perchiarezza} 
%%%
%the bound for $[h]_{C^{0,1}(\ol{\Om})}$ 
%%%%%%
and that $d_\Om \le 2$.
Thus, \eqref{eq:step 3 final proof} always holds for some constant $c=c(N,\te,a)$.
The desired conclusion, then easily follows by recalling \eqref{eq:rhoe-rhoi stima cone}.

(ii) Fix $0<\eta<1$.
Combining item (ii) of Corollary \ref{cor:general v secondo step per caso al>1} with $v=h$, $\tau=3/2$, and \eqref{eq:oscillation altra trivial diversa} gives that
\begin{equation*}
%\label{eq:N=2 step1 proof finale}
\max_{ \ol{\Si} } h - \min_{ \ol{\Si} } h \le c  \left\{ \si^{-\eta-1/2 } \nr \de_{\Ga}^{3/2} \, \na^2 h  \nr_{2, \Om }  + [h]_{C^{0,1}(\ol{\Om})} \, \si  \right\},
\end{equation*}
for any $0<\si \le \si_0$. 
%By recalling Remark \ref{rem:diametro e volume upper bound gratis}, here $c$ is some explicit constant $c=c( \te , \ca)$.
Putting together the last inequality, \eqref{eq:Gvalue_ga=1_perchiarezza}, \eqref{eq:trivialinequalitydistanceSigmaGamma}, and the first inequality in item (i) of Lemma \ref{lem:weighted-bounds}, we infer:  
%%%%%%%
%$$
%\max_{ \ol{\Si} } h - \min_{ \ol{ \Si} } h \le 2c\,(L+1)  \left\{ \si^{-\eta-1/2 } \nr \de_{\Ga}^{3/2} %\, \na^2 h  \nr_{2, \Om }  + [h]_{C^{0,1}(\ol{\Om})} \, \si  \right\}.
%$$
%%%%%%%%%%%%
$$
\max_{ \ol{\Si} } h - \min_{ \ol{ \Si} } h \le 2c\,(L+1)  \left\{ \si^{-\eta-1/2 } \nr u_\nu^2 - R^2  \nr_{2, \Om }^{1/2}  +  \si  \right\}.
$$
\par
We now fix
$$
\si = \min \left\{ \nr u_\nu^2 - R^2  \nr_{2, \Om }^{1/(3 + 2 \eta)} , \si_0  \right\},
$$
so as to minimize in $\si \in (0, \si_0]$ the right-hand-side, and conclude by the same analysis performed in item (i).
\end{proof}

\appendix

\section{Remarks on the uniform cone condition}\label{appendix:cone and John}

In this appendix, we detail some geometrical facts and amend an inaccuracy contained in \cite{MP6}.

\subsection{Some geometrical facts}
As already mentioned, the uniform $(\theta, a )$-interior cone condition adopted in the present paper is equivalent to 
%
%the Lipschitz-regularity of the domain; more precisely, it is equivalent to
%
the strong local Lipschitz property of Adams \cite[p. 66]{Ad} and to the uniform Lipschitz regularity in \cite[Section III]{Ch} and \cite[Definition 2.1]{Ru}.
By putting together \cite[Proposition 4.1 in the Appendix]{Ru} and \cite[Proposition III.1]{Ch}, we easily infer the following result.

\begin{lem}%[\cite{Ru}]
\label{lem:Ruiz connected parallel}
Let $\Om$ be a bounded domain
%%%%%%%
%(i.e., open and connected) 
%%%%%%%%%%%%%
satisfying the uniform $(\te, a )$-interior cone condition.
	There exists a positive constant $\de_0$ depending on $a, \te$, and $d_\Om$ such that, for any $\si \le \de_0$, the parallel set $\Om_\si=\left\{ x\in \Om  \, : \, \de_{\Ga }(x) > \si \right\}$ is connected.
\end{lem}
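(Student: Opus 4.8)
The plan is to prove the slightly stronger statement that $\Om_\si$ is \emph{path}-connected once $\si$ is below a threshold $\de_0=\de_0(a,\te,d_\Om)$. First I would fix a reference point $x_0\in\Om$ lying well inside: any interior cone $\cC_{\om}$ appearing in the cone condition contains a ball of radius comparable to $a\sin\te$, so $\Om$ contains such a ball and we may choose $x_0$ with $\de_\Ga(x_0)\ge 2\rho_0$ for some $\rho_0=\rho_0(a,\te)>0$. Imposing $\si<\rho_0$ then guarantees $x_0\in\Om_\si$, and connectedness of $\Om_\si$ reduces to showing that every point of $\Om_\si$ can be joined to $x_0$ within $\Om_\si$ (recall $\Om$ itself is connected, being a domain).

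The core local step is an inward push. Let $w\in\Om$ with $0<\de_\Ga(w)\le\si$, pick a nearest point $p\in\Ga$, and let $\om=\om_p$ be the cone axis furnished by the $(\te,a)$-uniform interior cone condition. Since $|w-p|\le\si<a$ we have $w\in B_a(p)\cap\ol\Om$, hence $w+\cC_{\om}\subset\Om$; in particular the segment $t\mapsto w+t\,\om$, $0\le t<a$, stays in $\Om$, and an elementary computation inside the cone gives $\de_\Ga(w+t\,\om)\ge c\,t\,\sin\te-\de_\Ga(w)$ on a suitable range of $t$. Travelling a distance of order $\si/\sin\te$ therefore lands in $\Om_\si$ along a segment contained in $\Om$, provided $\si/\sin\te$ is a small fraction of $a$; this is precisely the constraint that pins down the first part of $\de_0$ (compare \eqref{eq:definition sigma 0}). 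Thus every point of $\Om$ at distance at most $\si$ from $\Ga$ comes with a short inward path reaching $\Om_\si$.

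Finally I would glue these pushes along a connecting path: given $x,y\in\Om_\si$, join them by a path $\gamma$ in $\Om$, and wherever $\gamma$ enters the layer where $0<\de_\Ga\le\si$, replace that arc by ``push inward to $\Om_\si$, travel inside $\Om_\si$, push back out''. The delicate point, and the main obstacle, is uniformity: one must exhibit a \emph{single} $\de_0$ valid for all boundary points and all connecting paths, which forces the cone axes at nearby boundary points to vary in a controlled way and the intermediate ``travel inside $\Om_\si$'' arcs to remain in $\Om_\si$. This is cleanest to handle by invoking the equivalence recalled in Section \ref{sec:preliminary-estimates} between the $(\te,a)$-uniform interior cone condition and uniform Lipschitz regularity, which locally presents $\Om$ as a subgraph; over a fixed base the parallel set is again a subgraph, hence connected, and a finite covering of $\Ga$ by such charts — whose cardinality is controlled by $a,\te,d_\Om$, which is where the dependence on the diameter enters — patches the local statements into the global one. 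Concretely this is the combination of \cite[Proposition III.1]{Ch} and \cite[Proposition 4.1 in the Appendix]{Ru} alluded to above, and I expect the bookkeeping in that patching, rather than the single-cone trigonometry, to be the only real work.
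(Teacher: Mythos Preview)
The paper does not give a self-contained proof of this lemma at all: it simply states that the result follows ``by putting together \cite[Proposition 4.1 in the Appendix]{Ru} and \cite[Proposition III.1]{Ch}'', i.e., the equivalence with uniform Lipschitz regularity plus the known connectedness of parallel sets for such domains. Your proposal ultimately lands on exactly the same two references, so at the level of what is actually \emph{proved} you are in agreement with the paper.

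Where you go further is in sketching a direct cone-based argument, and there your gluing step as written is circular: when you replace an arc of $\gamma$ that dips into the boundary layer by ``push inward to $\Om_\si$, travel inside $\Om_\si$, push back out'', the middle leg presupposes that you can already connect two points of $\Om_\si$ inside $\Om_\si$, which is precisely the statement you are trying to prove. You do flag this as the delicate point and then retreat to the Lipschitz-graph picture and the cited references, which is exactly what the paper does; but be aware that the inward-push paragraph by itself does not yield a proof without that external input. If you wanted a genuinely self-contained argument, the right fix is the one you hint at: work in a single Lipschitz chart where $\Om$ is a subgraph (so $\Om_\si$ is again a subgraph over the same base, hence connected), and then use a finite cover of $\Ga$ by such charts---with cardinality controlled by $a$, $\te$, $d_\Om$---together with overlaps deep enough that adjacent local pieces of $\Om_\si$ share interior points. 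That finite-cover bookkeeping is the content of the references, and neither you nor the paper reproduce it.
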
 
%\begin{proof}
%	The conclusion follows by putting together \cite[Proposition 4.1 in the Appendix]{Ru} and \cite[Proposition III.1]{Ch}.
%\end{proof}

A domain $\Om$ in $\RR^N$ is a {\it $b$-John domain}, with $b \ge 1$, if each pair of distinct points $x_1$ and $x_2$ in $\Om$ can be joined by a curve $\psi: [0,1] \to \Om$ such that
$\psi(0)=x_1$, $\psi(1)=x_2$, and
\begin{equation*}
	b\,\de_{\Ga} (\psi(t)) \ge \min\left\{ |\psi(t) - x_1|, |\psi(t) - x_2| \right\}.
\end{equation*}
A curve satisfying the previous inequality is called a {\it John curve}.
By using the previous lemma, we now prove that domains satisfying the uniform $(\theta, a )$-interior cone condition are $b$-John domains and provide an explicit estimate for $b$ in terms of $\te, a, d_\Om$.

\begin{lem}\label{lem:cone condition strong implies John}
	Let $\Om$ be a bounded domain satisfying the uniform $(\te, a )$-interior cone condition. Then, $\Om$ is a $b$-John domain with
	$$
	b \le \max \left\{ \frac{1}{\sin(\te)} , \frac{d_\Om}{\min \left\{ \frac{a}{2} \frac{\sin \te}{1+ \sin \te} , \, \de_0 \right\} } \right\} ,
	$$
	where $\de_0$ is the constant appearing in Lemma \ref{lem:Ruiz connected parallel}.
\end{lem}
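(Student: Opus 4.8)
The plan is to prove the $b$-John property by a two-regime argument, splitting according to whether the two given points are "deep" inside $\Om$ or "near" the boundary. First I would fix $x_1, x_2 \in \Om$ and consider the quantity $\sigma_0 := \min\{ \frac{a}{2}\frac{\sin\te}{1+\sin\te}, \de_0\}$, which is exactly the threshold below which, by Lemma \ref{lem:Ruiz connected parallel}, the parallel set $\Om_\si$ is both connected and still a nice domain. The first case is when both points lie in $\Om_{\sigma_0}$ (i.e.\ $\de_\Ga(x_j) > \sigma_0$): here the connectedness of $\Om_{\sigma_0}$ lets us join $x_1$ to $x_2$ by \emph{any} curve lying in $\Om_{\sigma_0}$, along which $\de_\Ga \ge \sigma_0$, while the right-hand side $\min\{|\psi(t)-x_1|,|\psi(t)-x_2|\}$ is trivially at most $d_\Om$. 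Thus the John inequality holds with $b \le d_\Om/\sigma_0$, which is the second term in the stated maximum.

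The second case is when at least one point, say $x_1$, is close to $\Ga$, i.e.\ $\de_\Ga(x_1) \le \sigma_0$. Here I would use the cone condition directly: let $\om = \om_{x_1}$ be the cone axis at $x_1$, so that $x_1 + \cC_\om \subset \Om$, and travel along the axis segment $\psi(t) = x_1 + t\,\ell\,\om$ (for an appropriate length $\ell$, essentially of order $\sigma_0$ or until we reach $\Om_{\sigma_0}$) until we have pushed the current point into $\Om_{\sigma_0}$, then invoke the first case to finish the journey to $x_2$ (or to a similar push-in segment emanating from $x_2$). Along the axis of a cone of opening $\te$, a standard trigonometric estimate gives $\de_\Ga(\psi(t)) \ge |\psi(t) - x_1| \sin\te$ — a point at arclength $s$ from the vertex along the axis has an entire ball of radius $s\sin\te$ around it inside the cone — which furnishes the John inequality on this piece with $b \le 1/\sin(\te)$. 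Concatenating the (at most two) push-in segments with the deep-interior curve from the first case, and checking that the John inequality is preserved under concatenation (each sub-curve satisfies it relative to the global endpoints because $\de_\Ga$ only ever needs to control the \emph{nearer} endpoint, and on the push-in segment from $x_1$ that nearer endpoint is $x_1$), yields the bound $b \le \max\{1/\sin\te,\ d_\Om/\sigma_0\}$ as claimed.

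The main obstacle I anticipate is the bookkeeping in the concatenation: one must verify that a curve built from several John sub-arcs with respect to \emph{local} endpoints is still a John curve with respect to the \emph{global} endpoints $x_1,x_2$, with the same constant up to the max. The clean way is to argue that on the initial segment near $x_1$ the minimum $\min\{|\psi-x_1|,|\psi-x_2|\}$ equals $|\psi-x_1|$ (since we stay within distance $\sim\sigma_0 \le d_\Om/2$-ish of $x_1$, or more carefully since the point has only moved a controlled amount), symmetrically near $x_2$, and on the middle deep-interior portion $\de_\Ga \ge \sigma_0$ beats $d_\Om/b$; the geometric content in each piece is just the two elementary estimates $\de_\Ga(\psi(t)) \ge |\psi(t)-x_1|\sin\te$ (cone trigonometry) and $\de_\Ga \ge \sigma_0$ on $\Om_{\sigma_0}$ (definition of the parallel set plus its connectedness from Lemma \ref{lem:Ruiz connected parallel}). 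A minor technical point to handle is the degenerate situation where, say, $x_1$ is near $\Ga$ but $x_2$ is also near $\Ga$, or where $\Om$ is so small that one cannot push a full distance $\sigma_0$ inward before exhausting the diameter; in that case one simply stops the push at the point realizing $\de_\Ga = \min\{\sigma_0, \de_\Ga(\text{midpoint region})\}$ and the same two estimates still close the argument, since $d_\Om/\sigma_0 \ge 1$ ensures the deep-interior bound is never vacuous.
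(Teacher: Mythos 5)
Your proposal follows essentially the same route as the paper: the same threshold $\si_0 = \min\{\tfrac{a}{2}\tfrac{\sin\te}{1+\sin\te},\,\de_0\}$, the same two-regime split (use connectedness of $\Om_{\si_0}$ for the deep-interior portion with bound $d_\Om/\si_0$; use a straight push along a cone axis with bound $1/\sin\te$ for endpoints near $\Ga$), and the same concatenation. One small inaccuracy to fix: in the cone condition, the axis $\om$ is indexed by \emph{boundary} points, so you should take $\om=\om_{x^1}$ where $x^1$ is the projection of $x_1$ onto $\Ga$, and then invoke \eqref{def:cone strong} with $w=x_1$ (valid since $\de_\Ga(x_1)\le\si_0\le a/4<a$) to get $x_1+\cC_\om\subset\Om$ — but this does not change the argument.
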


\begin{proof}
	Set $\si= \min \left\{ \frac{a}{2} \frac{\sin \te}{1+ \sin \te} , \, \de_0 \right\}$. Lemma \ref{lem:Ruiz connected parallel} guarantees that any two points $x_1, x_2\in\Om_\si$ can be joined by a curve $\psi: [0,1]\to\Om_\si$. Also, we easily compute that 
	$$
	\frac{\min\left\{ |\psi(t) - x_1|, |\psi(t) - x_2| \right\}}{\de_{\Ga} (\psi(t))} \le 
	\frac{d_\Om}{\de_{\Ga} (\psi(t))} \le \frac{ d_\Om }{\min \left\{ \frac{a}{2} \frac{\sin \te}{1+ \sin \te} , \, \de_0 \right\} } . 
	$$
	
	On the other hand, if $x_j$ (for $j=1$ and/or $2$) is a point in $\Om\setminus \Om_\si$, then we can find a point $y_j \in \Om_\si$ and another curve $\phi_j$, joining $x_j$ to $y_j$,  such that
\begin{equation*}
\label{eq:stepJohnstimaesplicita}
		\frac{\min\left\{ |\phi_j (t) - x_1|, |\phi_j (t) - x_2| \right\}}{\de_{\Ga} (\phi_j (t))} \le
		%\frac{|\phi_j (t) - x_j| }{\de_{\Ga} (\phi_j (t))} \le 
\frac{1}{\sin \te} .
\end{equation*}
In fact, we have that
	$\de_{\Ga} (x_j) \le \si \le \frac{a}{2} \frac{\sin \te}{1+ \sin \te} \le a/4 $. Hence, if $x^j$ is the projection of $x_j$ on $\Ga$, \eqref{def:cone strong} gives that $x_j + \cC_{\om} \subset \Om$ with $\om=\om_{x^j}$. If we set $y_j = x_j + \frac{a}{1+ \sin \te} \om$ (which is a point on the axis of the cone $x_j + \cC_{\om}$), by some trigonometry we have that $\de_{\Ga} (y_j) \ge \de_{\pa (x_j + \cC_\om)}(y_j) = a \frac{\sin \te}{ 1 +\sin \te} > \frac{a}{2} \frac{\sin \te}{ 1 +\sin \te}$. In particular, $y_j \in \Om_\si$. 
\par
For $\ell>0$, the choice
	\begin{equation*}
		%%%%%%%%%%%%%%%%%%%%%%%%%%%%%%
		%\phi_1(t):= x_1 + \frac{t}{\ell} (y_1 - x_1) \quad \quad
		%t \in [0, \ell],
		%%%%%%%%%%%%%%%%%%%%%%%%%%%
		\phi_j(t)=
		\begin{cases}
			x_1 + \frac{t}{\ell} (y_1 - x_1) \quad & \text{if } j=1 , 
			\\
			y_2 + \frac{t}{\ell} (x_2 - y_2) \quad & \text{if } j=2 ,
		\end{cases}
		\quad \quad
		t \in [0, \ell],
	\end{equation*}
is clearly admissible. Moreover, for any $x_1, x_2 \in \Om$, it allows to create a suitable curve from $x_1$ to $x_2$  by joining together $\phi_1$ (if $x_1 \in \Om\setminus \Om_\si$), a curve contained in $\Om_\si$, and $\phi_2$ (if $x_2 \in \Om\setminus \Om_\si$).

	In any case, for any $x_1, x_2 \in \Om$ we can always find a John curve $\psi$ from $x_1$ to $x_2$ such that
	\begin{equation*}
		\frac{\min\left\{ |\psi(t) - x_1|, |\psi(t) - x_2| \right\}}{\de_{\Ga} (\psi(t))} 
		\le \max \left\{ \frac{1}{\sin(\te)} , \frac{d_\Om}{\min \left\{ \frac{a}{2} \frac{\sin \te}{1+ \sin \te} , \, \de_0 \right\} } \right\} ,
	\end{equation*}
	and the conclusion follows.
\end{proof}

We now prove the following useful result.
\begin{lem}
\label{lem:conoparallel}
	Let $\Om$ satisfy the $(\theta, a )$-uniform interior cone condition. Then, the parallel set $\Om_\si=\left\{ x\in \Om  \, : \, \de_{\Ga}(x) > \si \right\}$ satisfies the $(\theta, a/2 )$-uniform interior cone condition, for any $\si\le a/4$.
\end{lem}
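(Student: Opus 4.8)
The plan is to verify the $(\te,a/2)$-uniform interior cone condition for $\Om_\si$ directly from the definition, reusing the cone direction of $\Om$ at a nearby boundary point. The case $\si=0$ is trivial (then $\Om_\si=\Om$, and a cone of height $a/2$ sits inside one of height $a$), so assume $0<\si\le a/4$. I would first record that every $x\in\pa\Om_\si$ lies in $\Om$ and satisfies $\de_\Ga(x)=\si$: since $\de_\Ga$ is continuous and $\Om_\si=\{x\in\Om:\de_\Ga(x)>\si\}$ is open, a point of $\pa\Om_\si$ has $\de_\Ga\ge\si>0$ (hence is not on $\Ga$, so lies in $\Om$) but also $\de_\Ga\le\si$. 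I would then fix a nearest point $x^\ast\in\Ga$ with $|x-x^\ast|=\si$ and declare $\om_x:=\om_{x^\ast}$, the unit vector supplied by the $(\te,a)$-cone condition of $\Om$ at $x^\ast$; this is the candidate axis at $x$.

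The core step is a ``fattening'' of the cone. Fix $w\in B_{a/2}(x)\cap\ol{\Om_\si}$, so $\de_\Ga(w)\ge\si$ and hence $\ol B_\si(w)\subset\ol\Om$. For every $w'\in\ol B_\si(w)$ the triangle inequality gives $|w'-x^\ast|\le\si+|w-x|+\si<\tfrac a4+\tfrac a2+\tfrac a4=a$, so $\ol B_\si(w)\subset B_a(x^\ast)\cap\ol\Om$. By the strong form of the cone condition of $\Om$ at $x^\ast$, we then get $w'+\cC\subset\Om$ for every $w'\in\ol B_\si(w)$, where $\cC=\{y:\lan y,\om_x\ran>|y|\cos\te,\ |y|<a\}$ is the height-$a$ cone about $\om_x$; equivalently, the Minkowski sum $w+\ol B_\si(0)+\cC$ lies in $\Om$.

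To conclude, take any $c$ in the height-$a/2$ cone about $\om_x$ (so $|c|<a/2<a$ and $\lan c,\om_x\ran>|c|\cos\te$, i.e. $c\in\cC$) and any $e$ with $|e|\le\si$. Then $w+c+e=(w+e)+c\in\Om$, since $w+e\in\ol B_\si(w)$. Hence $\ol B_\si(w+c)\subset\Om$, and because $\Ga=\pa\Om$ is compact and disjoint from the open set $\Om$, the (attained) distance $\de_\Ga(w+c)$ must be strictly larger than $\si$; thus $w+c\in\Om_\si$. Letting $c$ range over the height-$a/2$ cone and $w$ over $B_{a/2}(x)\cap\ol{\Om_\si}$ yields exactly the $(\te,a/2)$-uniform interior cone condition for $\Om_\si$ with axis $\om_x$.

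The main obstacle is precisely this fattening: one must use the \emph{strong} cone condition --- a cone emanating from every point of $B_a(x^\ast)\cap\ol\Om$, not just from $x^\ast$ --- because a single translated height-$a/2$ cone would not absorb $\si$-perturbations taken near its lateral surface. Everything else is elementary bookkeeping of constants, and the inequalities $\si+|w-x|+\si<a$ (using $\si\le a/4$ and $|w-x|<a/2$) and $|c|<a/2<a$ are what force the loss from $a$ to $a/2$, while the opening angle $\te$ is untouched throughout.
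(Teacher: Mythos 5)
Your proof is correct and follows the same route as the paper's: fix the nearest boundary point $x^\ast\in\Ga$ to $x\in\pa\Om_\si$, reuse its cone axis $\om_{x^\ast}$, and exploit the strong form of the cone condition on all of $B_a(x^\ast)\cap\ol\Om$ to absorb $\si$-perturbations, with the same bookkeeping $\si+|w-x|+\si<a$ that forces the loss from $a$ to $a/2$. You are merely more explicit than the paper in spelling out the Minkowski-sum fattening and in justifying, via compactness of $\Ga$, the strict inequality $\de_\Ga(w+c)>\si$.
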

\begin{proof}
	Let $x$ be any point on $\pa \Om_\si$ and let $y $ be a point in $\Ga$ (not necessarily unique) such that $\de_{\Ga}(A)=|x-y| = \si$. Since $\Om$ satisfies the $(\te, a )$-uniform interior cone condition, we set $\cC_{\om}$ to be a cone satisfying \eqref{def:cone strong} (with $x=y$).
	Since $B_\si (x)\subset\Om$, by using \eqref{def:cone strong} we can easily verify that $x+\cC_{\om}\cap B_{a/2} \subset \Om_\si$.
\par
Moreover, we can also check that
	\begin{equation*}
\label{eq:stepconoparallel}
		w +\cC_{\om}\cap B_{a/2} \subset \Om_\si \ \text{ for every } \ w \in B_{a/2} (x) \cap \ol{\Om}_\si .
	\end{equation*}
Since $x$ is chosen arbitrarily in $\pa \Om_\si$, the last inclusion gives that $\Om_\si$ satisfies the $(\te, a/2 )$-uniform interior cone condition.
The last inclusion holds by noting that,
for any $w\in B_{a/2}(x) \cap \ol{\Om}_\si$, we have that $B_\si(w)\subset\Om$ (by definition of $\Om_\si$) and $B_\si(w) \subset B_a (y)$ (being as $\si \le a/4$). Hence, we can argue as above to get that $w +\cC_{\om}\cap B_{a/2} \subset \Om_\si$.
\end{proof}

\begin{figure}[h]
\begin{center}
\begin{tikzpicture}[scale=.5] 

\draw plot [thick, mark=*, smooth] coordinates {(1.7,1)}; % punto x
\draw plot [thick, mark=*, smooth] coordinates {(1,1)}; % punto y

\draw (1,1) circle (40mm); % circonferenza con centro in y

\fill [gray!30] (1.7,1) -- (1.7,3) -- (3.5,2); % triangolo riempito
\fill [gray!30, draw]
(1.7,1) ++(30:20mm) arc (30:90:20mm);  % arco riempito
\draw [thick] (1.7,1) -- (1.7,3); % segmento settore
\draw [solid] (1,1) -- (-3,1); % raggio grande
\draw [solid] (1.7,1) -- (.2,-.3); % raggio piccolo
\draw [thick] (1.7,1) -- (3.45,2); % segmento settore
\draw [solid, densely dashed] (1,1) -- (1,5.5); % linea verticale tratteggiata
\draw [solid, densely dashed] (2,.35) -- (5.5,2.35); % linea obliqua tratteggiata
\draw [thick] (1.7,1) circle (20mm); % circonferenza grande con centro in x
\draw [solid] (1.7,1) circle (7mm); % circonferenza piccola con centro in x

\node at (1.7,.6) {$x$}; % label x
\node at (.7,.65) {$y$}; % label y
\node at (-1.2,1.2) {$a$}; % label a
\node at (1,-.3) {$a/2$}; % label a/2
\end{tikzpicture}
\end{center}
\caption{The construction of Lemma \ref{lem:conoparallel}. Here, $x\in\pa\Om_\si$ and  $y\in\Ga=\pa\Om$ is such that $|x-y|=\de_\Ga(x)=\si\le a/4$. 
The shaded region is the cone $x+\cC_{\om}\cap B_{a/2}$. By \eqref{def:cone strong}, the region bounded by the dashed lines and containing the smallest disk is contained $\Om$. }
\end{figure}

%
%	
%	\begin{figure}[h]\label{fig:coneparallel}
%		\centering
%		\includegraphics[scale=0.5]{FINALCONE20220930tagliata}
%\caption{$A$ is a point on $\pa E_\si$ and $C$ is a point on $\pa E$ satisfying $\de_{\pa E}(A)=|A-C|$ (the circle with thin solid line represents the sphere centered at $A$ with radius $\si \le a/4$). The circle with dotted line represents the sphere centered at $C \in \pa E$ with radius $a$. The circle with thick solid line represents the sphere centered at $A$ with radius $a/2$.  By \eqref{def:cone strong}, the region containing $A$ and bounded by the dashed line must be contained $E$. The shaded region represents $A+\cC_{\om(A)}$.
%			%%%%%%%%%%%%%%%%%%%%%%%%%%%%%%%%%%%%%%%%%%%%%%%%%%%%%%%%
%			%		\\
%			%		Aside, notice that $C+\cC_{\om(C)}$ must be ``tangent'' to $B_{\si}(A)$ at $C$, 
%			%otherwise the cone condition would force $C$ to be an interior point of $E$ 
%			%(which is not, as it belongs to $\pa E$).
%			%%%%%%%%%%%%%%%%%%%%%%%%%%%%%%%%%%%%
%		}
%	\end{figure}
%	
%	

\subsection{Errata corrige of \texorpdfstring{\cite[Corollary 2.3 and Theorems 2.4 and 2.7]{MP6}}{[17, Corollary 2.3 and Theorems 2.4 and 2.7]}}
\label{sec:errata corrige}
In \cite{MP6}, we assumed the following notion of cone condition, which is strictly weaker than the one adopted in the present paper. A bounded domain $\Om \subset \RR^N$ with boundary $\Ga$ satisfies the $(\te, a)$-uniform interior cone condition if, for every $x \in \ol{\Om}$, there is a cone $\cC_x$ with vertex at $x$, opening width $\te$, and height $a$, such that
$\cC_{x} \subset\Om$ and  $\ol{\cC}_{x} \cap \Ga = \{ x \} $, whenever $x \in \Ga$.
We will refer to this definition as the \textit{old cone condition}. It is easy to check that this condition is verified (with same $\te$ and $a$), if $\Om$ satisfies the (new) $(\te,a)$-uniform interior cone condition adopted in Section \ref{sec:preliminary-estimates}.

It is a classical result (\cite{Ad, Ru}) that if $\Om$ is a bounded domain satisfying the old cone condition, then there exists a positive constant $C_p(\Om)$ ---  the $(p,p)$-Poincar\'e constant --- such that 
\begin{equation*}\label{eq:errata corrige pp Poincare}
	\nr f - f_\Om\nr_{p,\Om}\le C_p(\Om) \nr\na f\nr_{p,\Om} \ \text{ for any } \ f \in W^{1,p}(\Om).
\end{equation*}

\par
We realized that the proof of \cite[Corollary 2.3]{MP6} contains a mistake.
Here, we correct that proof. 
%%%%%%%%%%
%This works whenever $\Om$ satisfies the old cone condition and is a \textit{$(p,p)$-Poincar\'e domain}, %i.e. there exists a positive constant $C_p(\Om)$ ---  the $(p,p)$-Poincar\'e constant --- such that 
%\begin{equation*}\label{eq:errata corrige pp Poincare}
%\nr f - f_\Om\nr_{p,\Om}\le C_p(\Om) \nr\na f\nr_{p,\Om} \ \text{ for any } \ f \in W^{1,p}(\Om).
%\end{equation*}
%%%%%%%%%%%%%%%%%%
The amended proof below shows that the constant $c$ in \cite[Corollary 2.3]{MP6} depends not only on $N$, $p$, $\te$, $a$, \textit{but also} on $C_p(\Om)$. As a consequence, the dependence on $C_p(\Om)$ should be added also in the constants $c$ of \cite[Theorems 2.4 and 2.7]{MP6}. 
Since, when $\Om$ is of class $C^2$, $C_p(\Om)$ can be estimated in terms of the radius $r_i$ of the uniform interior sphere condition and the diameter $d_\Om$ (see \cite[item (iii) of Remark 2.4]{MP3}), \cite[Lemma 3.2]{MP6} remains true with a constant $c=c(N, p , r_i, d_\Om)$ and the rest of the paper remains unchanged. 
%%%%%%%%%%%%%%%%%%%%%%%%%
% depend on $N$, $p$, $\te$, $a$, in a slightly more complicated way, and on $d_\Om$. 
%%%%%%%%%%%%%%%%%%%%

\begin{proof}[Amended proof of {\cite[Corollary 2.3]{MP6}}]
%Let $\cC_x$ be the cone corresponding to $x\in\ol{\Om}$ in the old cone condition. By using the triangle inequality, we have that
%	\begin{equation}\label{eq:errata corrige triangular}
%	\left| f(x) - f_{\Om} \right| \le	\left| f(x) - f_{\cC_x} \right|	+ \left| f_{\cC_x} - f_\Om \right|	,
%	\end{equation}
%where $\be$ denote the Euler's beta function.
	By using \cite[(2.3)]{MP6}, we have that 
$$
| f(x) - f_{\cC_x}|
\le c_{N,p}\,a
\left( \frac{1}{|\cC_x|}  \int_{\cC_x} | \na f |^p \, dx \right)^{1/p}
\le c_{N,p}\,\frac{a\ }{|\cC_x|^{1/p}} \nr\na f\nr_{p,\Om}.
$$
(Note that in \cite{MP6}, differently from the present paper, the $L^p$ norms were normalized by the Lebesgue measure of the domain.)
\par
Next, we easily infer that	
\begin{multline*}
|f_{\cC_x} - f_\Om| \le \frac{1}{|\cC_x|} \int_{\cC_x} | f - f_\Om | \, dx
\le \frac{1}{|\cC_x|^{1/p}} \left( \int_{\cC_x} | f - f_\Om |^p \, dx \right)^{1/p} \le \\
\frac{1}{|\cC_x|^{1/p}} \nr f - f_\Om\nr_{p,\Om} 
\le \frac{C_p(\Om)}{|\cC_x|^{1/p}}\, \nr \na f\nr_{p,\Om}.
\end{multline*}
All in all, we conclude that
$$
| f(x) - f_{\Om}|\le | f(x) - f_{\cC_x}|+|f_{\cC_x} - f_\Om|  \le c \, \nr \na f\nr_{p,\Om},
$$
for some constant $c$ that depends on $N, p, \te, a$, and $C_p(\Om)$.     
\end{proof}
 
\begin{rem}
 {\rm
As pointed out in Remark \ref{rem:on the proof of John and Poincare},
%
%the proof of Lemma \ref{lem:John-two-inequalities} and Corollary \ref{cor:JohnPoincareaigradienti}, 
%
if $\Om$ is a bounded $b$-John domain, $C_p(\Om)$ can be estimated in terms of $b$ and $d_\Om$. In turn, if $\Om$ satisfies the new cone condition of the present paper, the John parameter $b$, and hence $C_p(\Om)$, can be estimated in terms of the parameters $\te$, $a$ of the relevant definition, and $d_\Om$. From this observation, the statement of Lemma \ref{lem:p>N + p<N and p=N} easily follows.
\par
On the contrary, the old cone condition adopted in \cite{MP6} is not sufficient to give an estimate of the $(p,p)$-Poincar\'e constant (see, e.g., \cite{Ru}), and hence neither of the John parameter. 
In fact, reasoning as in \cite[Example 2.6]{Ru}, one can construct a family of (uniformly) bounded domains $\Om^\ve$ sharing the same (fixed) parameters of the old cone condition and a sequence $u_\ve \in W^{1,2}(\Om^\ve)$ such that
$$
\int_{\Om^\ve} u_\ve \, dx = 0, \quad \int_{\Om^\ve}|\na u_\ve|^2 \, dx \to 0,
$$
while
$\int_{\Om^\ve} u_\ve^2 \, dx$ remains bounded away from zero.
%%%%%%%%%%%%%%%%%%
%This proves that the old cone condition is not sufficient to give an estimate of the $(p,p)$-Poincar\'e %constant. 
%%%%%%%%%%%%%%%%%%%%%%%%%%%
%\par
%We finally point out that the oscillation of the functions $u_\ve$ considered in \cite[Example 2.6]{Ru} %is also bounded away from zero.
%Thus, the constants in \cite[Corollary 2.3 and Theorems 2.4 and 2.7]{MP6} cannot depend only on $N$, %$p$, $q$ (if applicable),
%%%%%%%%%%%%
%% (only for \cite[Theorem 2.7]{MP6}), 
%%%%%%%%%%%%%
%and the parameters $\te$, $a$ of the old cone condition.
}
\end{rem}

\section*{Acknowledgements}
R. Magnanini is partially supported by the Gruppo Nazionale Analisi Mate\-matica Probabilit\`a e Applicazioni (GNAMPA) of the Istituto Nazionale di Alta Matema\-ti\-ca (INdAM).

G. Poggesi
%%%%%%%%%
%is supported by the Australian Laureate Fellowship FL190100081 ``Minimal surfaces, free boundaries and %partial differential equations'' and is member of AustMS and INdAM/GNAMPA.  
%%%%%%%%%%%%%%%%%%
is supported by the Australian Research Council (ARC) Discovery Early Career Researcher Award (DECRA) DE230100954 ``Partial Differential Equations: geometric aspects and applications'' and the 2023 J G Russell Award from the Australian Academy of Science, and is member of the Australian Mathematical Society (AustMS) and the Gruppo Nazionale Analisi Matematica Probabilit\`{a} e Applicazioni (GNAMPA) of the Istituto Nazionale di Alta Matematica (INdAM).

The authors are grateful to the referee, whose comments helped to improve the manuscript.

\subsection*{Data availability statement} Data sharing not applicable to this article as no datasets were generated or analysed during the current study.

\end{document}